\numberwithin{figure}{section}
\numberwithin{equation}{section}
\definecolor{ruta}{rgb}{0.309, 0.459, 0.208}
\definecolor{ruta1}{rgb}{0.205,0.301,0.201}
\definecolor{ruta2}{rgb}{0.409, 0.459, 0.208}
\definecolor{vino}{rgb}{0.256,0,0}
\definecolor{siva}{rgb}{0.205,0.201,0.201}
\definecolor{siva'}{rgb}{0.250,0.116,0.116}
\let\cal\mathcal
\def\Dscr{{\cal D}}
\def\Fscr{{\cal F}}
\def\Gscr{{\cal G}}
\def\Mscr{{\cal M}}
\def\Oscr{{\cal O}}
\def\Tscr{{\cal T}}
\def\Yscr{{\cal Y}}
\let\blb\mathbb
\def \ZZ{{\blb Z}}
\def \NN{{\blb N}}
\def \RR{{\blb R}}
\def\quot{/\!\!/}
\def\Qch{\operatorname{Qch}}
\def\coh{\mathop{\text{\upshape{coh}}}}
\def\rad{\operatorname {rad}}
\def\Spec{\operatorname {Spec}}
\def\Ext{\operatorname {Ext}}
\def\Hom{\operatorname {Hom}}
\def\End{\operatorname {End}}
\def\RHom{\operatorname {RHom}}
\def\End{\operatorname {End}}
\def\Pic{\operatorname {Pic}}
\def\gldim{\operatorname {gl\,dim}}
\def\r{\rightarrow}
\def\Vol{\operatorname {Vol}}
\def\rank{\operatorname {rank}}
\newcommand{\ol}{\overline}
\DeclareMathOperator{\Cl}{Cl}
\newcommand{\s}{\sigma}
\newcommand{\la}{\langle}
\newcommand{\ra}{\rangle}
\newcommand{\lf}{\lfloor}
\newcommand{\rf}{\rfloor}
\newcommand{\lc}{\lceil}
\newcommand{\rc}{\rceil}
\newcommand{\M}{{\sf{M}}}
\newcommand{\nP}{P'}
\def\convind{\text{convexly induced}\xspace} 
\def\convinde{\text{convexly induce}\xspace} 
\def\convindmod{\text{convexly induced}\xspace} 
\newcommand{\bc}{\mathtt{c}}
\newcommand{\bd}{\mathtt{d}}
\newcommand{\kl}{k}
\newtheorem{lemma}{Lemma}[section]
\newtheorem{proposition}[lemma]{Proposition}
\newtheorem{theorem}[lemma]{Theorem}
\newtheorem{corollary}[lemma]{Corollary}
\newtheorem{convention}[lemma]{Convention}
\theoremstyle{definition}
\newtheorem{definition}[lemma]{Definition}
\newtheorem{step}{Step}
\theoremstyle{remark}
\newtheorem{remark}[lemma]{Remark}
\newdimen\uboxsep \uboxsep=1ex
\def\uboxn#1{\vtop to 0pt{\hrule height 0pt depth 0pt\vskip\uboxsep
\hbox to 0pt{\hss #1\hss}\vss}}
\def\uboxs#1{\vbox to 0pt{\vss\hbox to 0pt{\hss #1\hss}
\vskip\uboxsep\hrule height 0pt depth 0pt}}
\def\Perf{\operatorname{Perf}}
\let\oldmarginpar\marginpar
\def\marginpar#1{ \oldmarginpar{\tiny \raggedright #1}}
\def\cm{}
\def\Sym{\operatorname{Sym}}
\title[Non-commutative crepant resolutions for some toric singularities II]{Non-commutative crepant resolutions for some toric singularities II}
\author[\v{S}pela
\v{S}penko and Michel Van den Bergh]{\v{S}pela \v{S}penko and Michel
  Van den Bergh} \thanks{The first author is a FWO $[$PEGASUS$]^2$
  Marie Sk\l odowska-Curie fellow at the Free University of Brussels
  (funded by the European Union Horizon 2020 research and innovation
  programme under the Marie Sk\l odowska-Curie grant agreement No
  665501 with the Research Foundation Flanders (FWO)). During part of
  this work she was also a postdoc with Sue Sierra at the University
  of Edinburgh.}  \address{Departement Wiskunde, Vrije Universiteit
  Brussel, Pleinlaan $2$, B-1050 Elsene}
\email[]{spela.spenko@vub.ac.be}
\address{Departement WNI, Universiteit Hasselt, Universitaire Campus \\
  B-3590 Diepenbeek} \email[]{michel.vandenbergh@uhasselt.be}
\thanks{The second author is a senior researcher at the Research
  Foundation Flanders (FWO).  While working on this project he was
  supported by the FWO grant G0D8616N: ``Hochschild cohomology and
  deformation theory of triangulated categories''.}
\keywords{Toric varieties,  tilting bundle, noncommutative resolution}
\subjclass[2010]{13A50, 14M25, 32S45}
\begin{document}
\begin{abstract}
Using the theory of dimer models Broomhead proved that every $3$-dimensional Gorenstein affine toric variety $\Spec R$ admits a {\emph toric} non-commutative crepant resolution (NCCR). 
We give an alternative proof of this result  by constructing a tilting bundle on a (stacky) crepant resolution of $\Spec R$ using standard toric methods. Our proof does not use dimer models.
\end{abstract}

\maketitle

\section{Introduction}
\label{sec:intro}

Throughout $k$ is an algebraically closed base field of characteristic 
zero.  Let $R$ be a normal
Gorenstein 
domain. A \emph{non-commutative crepant resolution} (NCCR)
\cite{DaoIyama,Leuschke,SVdB,VdB32,Wemyss1} of $R$ is an $R$-algebra
of finite global dimension of the form $\Lambda=\End_R(M)$ which in
addition is Cohen-Macaulay as $R$-module and where $M$ is a non-zero
finitely generated reflexive $R$-module. In \cite{SVdB} we studied NCCRs of
rings of invariants which are given by modules of covariants. More precisely, we assumed
 $R=\Sym(W)^G$ where~$G$ is a reductive
group and~$W$ is a representation of $G$ and $M=(U\otimes_k \Sym(W))^G$ for a  representation $U$ of~$G$. When $G$ is a product of a torus and a finite abelian group 
such invariant rings are  coordinate rings of affine toric 
varieties and in that case NCCRs given by modules of covariants are called ``toric'' NCCRs. See e.g.\ \cite{Bocklandt}.

\medskip

In loc.\ cit.\ we were able to construct toric NCCRs in many cases (e.g.\ when $W$ is self-dual). However the following 
beautiful result of Broomhead remained  outside the scope of our methods.
\begin{theorem}\cite[Theorem 8.6]{Broom}\label{Broom}
The coordinate ring of a $3$-dimensional Gorenstein affine toric variety admits a toric NCCR.   
\end{theorem}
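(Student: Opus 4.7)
The plan is to construct explicitly a (stacky) crepant resolution $\pi\colon \Xscr \to \Spec R$ together with a tilting bundle $\Tscr = \bigoplus_\chi \Lscr_\chi$ of line bundles on $\Xscr$, so that $\Lambda := \End_{\Xscr}(\Tscr)$ will be the desired toric NCCR; endowing $\Tscr$ with its torus-equivariant structure automatically realizes $\Lambda$ as a module of covariants in the sense of the introduction.

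First I would set up the toric data. Since $\Spec R$ is three-dimensional and Gorenstein, it corresponds to a cone $\sigma \subset N_{\RR} \cong \RR^3$ whose primitive ray generators all lie on a common affine hyperplane $H = \{\langle m_0, -\rangle = 1\}$. The intersection $P := \sigma \cap H$ is a lattice polygon whose lattice points encode the data needed for a crepant resolution. Choosing any triangulation of $P$ with vertex set equal to $P \cap N$ and coning from the origin yields a refinement of $\sigma$; the associated smooth toric Deligne--Mumford stack $\Xscr$ comes with a crepant birational map $\pi\colon \Xscr \to \Spec R$, crepancy being automatic because every lattice point of $P$ is used as a ray.

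The key step is the construction of a tilting bundle on $\Xscr$. I would look for a finite set $\Sscr \subset \Pic(\Xscr)$ and verify the two tilting conditions:
\begin{enumerate}
\item[(i)] $\Ext^{\geq 1}_{\Xscr}(\Lscr, \Lscr') = 0$ for all $\Lscr, \Lscr' \in \Sscr$;
\item[(ii)] $\bigoplus_{\Lscr \in \Sscr} \Lscr$ classically generates $D^b(\coh \Xscr)$.
\end{enumerate}
Vanishing (i) should reduce by standard toric cohomology (the \v{C}ech complex on the refined fan) to a combinatorial vanishing statement for the polytope of the divisor associated to $\Lscr^{-1} \otimes \Lscr'$, while generation (ii) can be established either by a Beilinson-type resolution of the diagonal on $\Xscr$ or inductively by restricting to invariant divisors. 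Once (i) and (ii) hold, the Cohen--Macaulayness of $\Lambda$ over $R$ follows from the same cohomology vanishing combined with the fact that $\pi$ is crepant: then $\Lambda = \pi_* \uEnd(\Tscr)$ and $R^{i}\pi_* \uEnd(\Tscr) = 0$ for $i > 0$, which forces $\Lambda$ to be a maximal Cohen--Macaulay $R$-module. Finite global dimension of $\Lambda$ is then automatic from the derived equivalence $D^b(\coh \Xscr) \simeq D^b(\mod \Lambda)$ given by $\RHom_{\Xscr}(\Tscr, -)$.

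The main obstacle is choosing the set $\Sscr$ correctly. In dimension three the polygon $P$ can be arbitrarily complicated and, unlike the situation treated in our earlier paper \cite{SVdB}, there is no self-duality of $W$ available to prescribe the characters symmetrically. I expect $\Sscr$ will have to be indexed by a carefully selected fundamental domain for the $M$-translation action on a region of the character lattice of $\Xscr$, shaped so that every difference $\Lscr^{-1} \otimes \Lscr'$ lies in a locus where higher toric cohomology vanishes. Producing such a combinatorial gadget for every three-dimensional Gorenstein polygon $P$, without invoking the existence of a consistent dimer model (which is Broomhead's input), is the heart of the proof.
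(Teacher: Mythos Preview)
Your framework is exactly right --- a split tilting bundle on a stacky crepant resolution yields a toric NCCR via Proposition~\ref{sec:standard} --- but the proposal stops precisely where the content begins. You concede that ``producing such a combinatorial gadget for every $P$ \dots\ is the heart of the proof'' and offer no construction of~$\Sscr$; and your suggested routes to generation (a Beilinson resolution of the diagonal, or induction on invariant divisors) are not what the paper does and would themselves require substantial new input for arbitrary $P$.

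The paper's missing idea is to \emph{not} work directly on $P$. One embeds $P$ into a very simple polygon $P_0$ (a rectangle via Gulotta, or a triangle via Ishii--Ueda) for which a toric NCCR $\Lambda_0$ is already known explicitly. One then triangulates $P_0$ compatibly with a triangulation of $P$ (without extra vertices) using the Gulotta or Ishii--Ueda cutting procedures, and extends the character data defining $\Lambda_0$ from the vertices of $P_0$ to all vertices of the triangulation by a process called \emph{compatible convex induction} (\S\ref{subsec:convind}--\S\ref{sec:triangle}). This produces a candidate bundle $\Tscr_0$ on the stacky resolution $X_{\bold{\Sigma}_0}$ of $\Spec R_{P_0}$. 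Ext-vanishing is then checked by an inductive sign-pattern argument (Lemmas~\ref{lem:pretilting}, \ref{lem:pretiltingIU}) showing that the relevant simplicial complexes $V_{D_b,m}$ are empty or contractible. Generation is handled indirectly: convex induction guarantees $\End_{X_{\bold{\Sigma}_0}}(\Tscr_0)\cong\Lambda_0$, so $\Dscr(\Lambda_0)$ embeds admissibly in $\Dscr(X_{\bold{\Sigma}_0})$; but the latter has trivial relative Serre functor over $X_{P_0}$ and hence no nontrivial semi-orthogonal decomposition (Corollary~\ref{cor:sod}), forcing the embedding to be an equivalence. Finally $\Tscr:=\Tscr_0|_{X_{\bold{\Sigma}}}$ generates because restriction to an open substack preserves generation. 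Without the embedding into $P_0$, the convex-induction machinery, and this indirect generation argument, your outline is not yet a proof.
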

Broomhead proves this result by exploiting the close relationship
 between $3$-dimensional Gorenstein affine toric varieties
and so-called ``dimer models'' (certain bipartite graphs embedded
in a real toric surface).  More precisely, it is well-known that the fan 
corresponding to a 3-dimensional Gorenstein affine toric variety is
a cone over a lattice polygon and 
an algorithm  by Gulotta
\cite{Gulotta} (or an alternative algorithm by Ishii and Ueda \cite{IshiiUeda})
 associates to every lattice polygon~$P$ a ``consistent'' dimer model whose lattice
polygon of ``perfect matchings'' coincides with~$P$. Any dimer model also possesses an
associated superpotential algebra and this yields the required toric NCCR.

\medskip

Instead of Broomhead's algebraic approach one may try to construct the NCCR as the endomorphism
algebra of a tilting bundle on a (stacky) crepant resolution of
singularities. In fact in \cite{SVdB4} we  achieved this in a relatively straightforward way by
modifying the main method of \cite{VdB32}. The resulting  NCCRs  are
 however generally non-toric and so we do not obtain a proof of Theorem \ref{Broom} in this way.
 
If we want to obtain toric NCCRs then the tilting bundle has to be \emph{split} ($=$ sum of line bundles). 
In \cite{IshiiUeda} Ishii and Ueda give indeed a proof of Theorem \ref{Broom}
 by constructing  a (tautological) split tilting bundle on a
toric crepant resolution  which is  the solution of a suitable
moduli problem related to dimer models (see \cite[Theorem 1.4]{IshiiUeda}, \cite[Theorem 6.4]{ishiiueda2}).

The purpose of this note is to give a proof of Theorem \ref{Broom}
which \emph{avoids} dimer models altogether and which uses traditional
toric methods instead.  In the interest of full disclosure we mention
that our proof still depends on the initial combinatorial input
provided by Gulotta and Ishii-Ueda.  A distinguishing feature of our
approach is that we focus on ``small'' toric resolutions, i.e. those
without exceptional divisors.  Except for the most simple cases such
resolutions cannot be schemes and must be Deligne-Mumford stacks. In
particular they cannot be realized as moduli spaces of vector bundles
(in contrast to the projective crepant resolutions used by Ishii and
Ueda) as they generally have non-connected stabilizers.

The version of our proof starting from Gulotta's approach is the least combinatorially demanding, so we discuss it first.
Let $\Spec R$ be a 3-dimensional Gorenstein affine toric variety\footnote{As in \cite{CoxLittleSchenck} we assume throughout that a normal toric affine variety is  of the form $\Spec(k[\sigma^{\vee}\cap M])$ for a rational strongly convex polyhedral cone in $N$ (see e.g. \cite[Remark 1.2.19(1), Theorem 1.3.5]{CoxLittleSchenck}).} 
and let $P$ be the lattice polygon corresponding to $R$ (see \S\ref{sec:prel}). Gulotta's ``algorithm'' \cite{Gulotta} 
first embeds~$P$ into a rectangle $P_0$ and then reconstructs $P$ from~$P_0$ by iteratively  removing 
{corner} triangles (see Figure \ref{fig:gulotta}).  
After appropriately subdividing the removed triangles
we obtain a triangulation 
of $P_0-P$ and we complete it to a triangulation of $P_0$ by  adding diagonals to~$P$. The affine toric variety $\Spec R_0$ corresponding to $P_0$ is a so-called ``generalized conifold'' and it has a standard toric NCCR (see Remark \ref{rmk:nccr0}). We show that the latter is in fact obtained from a {split}
tilting bundle  on the stacky crepant resolution~$\Yscr_0$ of   $\Spec R_0$ corresponding to the  
triangulation of $P_0$ (see Appendix \ref{stacks}). 
 The restriction to the open substack~$\Yscr$ of~$\Yscr_0$ corresponding to the triangulation of $P$ is then shown to be a tilting bundle on~$\Yscr$. 
Note that this is not a formality
as being tilting is not a local property.

Another version of our proof starts with the Ishii-Ueda approach  (see \S\ref{IshiiUeda}).
 In this case we embed~$P$ in a triangle $P_0$ and we reconstruct
 $P$ from $P_0$ by iteratively removing 
 {corner} vertices and taking convex
 hulls of the remaining lattice points. 
 {See Figure \ref{fig:iu}}. This yields again a natural triangulation of $P_0-P$ and we continue 
 {in a similar manner} as above. 

\medskip

Besides the above results we also discuss some side results in Appendices \ref{appA}, \ref{appC}
(Appendix \ref{stacks} is devoted to a technical result necessary for the paper).

In Proposition \ref{prop:Vol} we give a useful combinatorial criterion for recognizing NCCRs of
three-dimensional toric Gorenstein singularities.

In Appendix \ref{appC} we elaborate on the relationship between tilting bundles and NCCRs.
In \cite{IUconj} it was established by Ishii and Ueda that if $X$ is
an arbitrary \emph{projective} crepant resolution of $\Spec R$ then every NCCR of $\Spec R$ 
is the endomorphism ring of some tilting bundle on $X$.
As the proof depends on realizing $X$ as a GIT moduli space it appears to use 
the projectivity hypothesis in an essential way 
and  we do not know if the result is true otherwise.
We note however that there certainly exist particular instances of non-projective crepant resolutions
for which the result remains true.
In appendix \ref{appC} we discuss such an example using our combinatorial techniques.

\section{Acknowledgement}
The authors thank Seung-Jo Jung, 
Martin Kalck, Sasha Kuznetsov and Michael Wemyss for interesting discussions concerning the material in this paper.

\section{Preliminaries and notation}\label{sec:prel}
Let $N=\ZZ^n$, $M=\Hom(N,\ZZ)$ and let $\la \;,\;\ra$  the natural pairing between $M$ and~$N$.
Let $\{n_i\in N\mid 1\leq i\leq \kl\}$ be a set of vectors which generates $N_{\RR}$ such that
\[\s=[n_1,\dots,n_\kl]=\{r_1 n_1+\cdots+r_\kl n_\kl\in N_\RR\mid r_i\geq 0\}\]
is a rational strongly convex polyhedral cone in $N_\RR$. 
We do {not} assume that $\{n_1,\dots,n_\kl\}$ is a minimal set of generators of $\s$.  
Let 
$$\s^\vee=\{m\in M_\RR\mid \la m,n_i\ra\geq 0 \;\text{for all $1\leq i\leq \kl$}\}$$  be the dual cone of $\s$.  
We denote by  $R_\s=k[\s^\vee\cap M]$ the associated semigroup algebra, and set $X_\s=\Spec R_\s$.  

Let $\rho:M\hookrightarrow\ZZ^k$ be defined by $m\mapsto (\la m,n_1\ra,\dots,\la m,n_\kl\ra)$. We set $G=\Hom(\ZZ^\kl/\rho(M),k^*)\subseteq (k^*)^\kl$ so that $X(G)=\ZZ^\kl/\rho(M)$. 

Let $e_i$ be the $i$-th generator of $\ZZ^\kl$, and let $\beta_i$ be its image in $X(G)$. Then $M\cong \rho(M)=\{(a_i)\in \ZZ^\kl\mid \sum_i a_i \beta_i=0\}$, and 
$\sigma^\vee\cap M\cong \rho(\sigma^\vee\cap M)=\{(a_i)\in \NN^\kl\mid \sum_i a_i \beta_i=0\}$. 
Thus $R_\s\cong k[x_1,\dots,x_\kl]^G$ where $G$ acts on $x_i$ by  
$\beta_i$, and $X_\sigma=Y\quot G$ for $Y=k^\kl$ with the $G$-weights  $(-\beta_i)_{i=1}^\kl$. 

For $b\in \ZZ^\kl$ let 
\begin{equation}
\label{eq:Mbdef}
M_b=\{m\in M\mid \la m,n_i\ra\geq 
{-}b_i \;\text{for all $1\leq i\leq \kl$}\},\quad {\sf M}_b=k  M_b.
\end{equation}
Note that $R_\sigma={\sf M}_0=kM_0$ and 
$b-b'\in \rho(M)$ implies $M_b\cong M_{b'}$ as modules over $M_0$. 
Denote the image of $b$ in $X(G)$ by~$\chi_b$. To $m\in M_b$ we associate $(a_i)_i=(\langle m,n_i\rangle+b_i)_i\in \NN^\kl$.
With this identification we get $M_b\cong\rho(M_b)+b=\{(a_i)\in \NN^\kl\mid \sum_i a_i\beta_i=
{-} \chi_b\}$. 
Thus $\M_b$ is isomorphic to the module of covariants $M(\chi_{-b})=(k[x_1,\dots,x_\kl]\otimes \chi_{-b})^G$. 

\medskip

If $[n_1,\dots,n_k]$ is a minimal presentation of $\sigma$ then the action of $G$ on $Y$ is generic in the sense of  \cite[Def.\S 1.3.4]{SVdB} (see \cite[\S 11.6.1]{SVdB}).
In that case $\chi_{-b}\mapsto M(\chi_{-b})=\M_b$ provides isomorphism $X(G)=\Cl(R_\sigma)$ (see e.g. \cite[Section 5.1]{CoxLittleSchenck} or \cite[Lemma 4.1.3]{SVdB}). 
In particular $\M_b$ is reflexive and 
\begin{equation}\label{eq:refhom}
\Hom_{R_\sigma}(\M_b,\M_{b'})=\M_{b'-b}.
\end{equation}  

We will often consider the special case 
where the $(n_i)_i$ are the generators of the $1$-dimensional cones in a simplicial fan $\Sigma$ with $|\Sigma|=\sigma$.
In that case we will denote the corresponding  stacky fan\footnote{A stacky fan is a simplicial fan together with generators for the one dimensional cones.} 
$(\Sigma,(n_i)_{i=1}^\kl)$ by $\bold{\Sigma}$ (see \cite{BorisovHorja}).  We denote by $X_{\bf \Sigma}$ (resp. $X_\Sigma$) the corresponding smooth toric DM stack (resp. (possibly) singular toric variety). As $\Sigma$ is simplicial, $X_{\bf \Sigma}$ is  an orbifold and $X_\Sigma$ is its coarse moduli space.

We have that $X_{\bf \Sigma}$ (resp. $X_\Sigma$) equals $Y_\Sigma/ G$  (resp. $Y_\Sigma\quot G $) 
 for an open subvariety $Y_\Sigma\subset Y$ which consists of all points 
$z =(z_1,\dots,z_\kl)$ such that the set of $n_i$ for which $z_i=0$ is contained in a cone of $\Sigma$ (see \cite[Section 2]{BorisovHorja}). In the following
diagram we assemble the varieties/stacks we have defined and we give names to some of the canonical maps between them.
\begin{equation}
\label{eq:can}
\xymatrix{
X_{\bold{\Sigma}}\ar[d]_{\pi_s}&Y_{\Sigma}/G\ar[l]_{\mu_s}^{\cong}\ar[d]\ar@{^(->}[r]^{\theta_s}&Y/G\ar[d]\\
X_\Sigma \ar@/_2em/[rrr]_{\tau}&Y_\Sigma\quot G\ar[l]_{\cong}^{\mu}\ar[r]_{\theta}& Y\quot G\ar[r]^{\cong}&
X_\sigma
}
\end{equation}

Performing the construction $b\mapsto \M_b$ for each of the cones in $\Sigma$ we obtain a reflexive rank 1 $\Oscr_{X_{\Sigma}}$-module
which we denote by $\Mscr_{\Sigma,b}$. Recall the following result.
\begin{lemma} \label{lem:relations}
Put
\begin{equation*}
\label{def:linebundle}
\Mscr_{\bold{\Sigma},b}=\mu_{s,\ast}(\chi_{-b}\otimes \Oscr_{Y_\Sigma/G}).
\end{equation*}
Then $\Mscr_{\bold{\Sigma},b}$ is a line bundle on $X_{\bold{\Sigma}}$ and 
\[
\ZZ^k\mapsto \Pic(X_{\bold{\Sigma}}):b\mapsto \Mscr_{\bold{\Sigma},b}
\] is a  morphism of abelian groups.
We have
\begin{equation}
\label{def:reflexive}
\Oscr(D_b)=\Mscr_{\Sigma,b}=\pi_{s,\ast} \Mscr_{\bold{\Sigma},b}
\end{equation}
where $D_b=\sum_{i=1}^k b_i D_{n_i}$ is the equivariant Weil divisor associated to 
$b$ as in \cite[\S4.1]{CoxLittleSchenck}. Moreover
\begin{equation}
\label{def:module}
\Gamma(X_\Sigma,\Mscr_{\Sigma,b})=\M_b.
\end{equation}
\end{lemma}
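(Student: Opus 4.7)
\smallskip

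\noindent\emph{Proof sketch.} The plan is to reduce each assertion to a local, toric-affine computation on the charts of $X_{\bold{\Sigma}}$ (equivalently $X_\Sigma$) indexed by the maximal cones of $\Sigma$, and then glue. For any $b\in \ZZ^k$, the sheaf $\chi_{-b}\otimes \Oscr_{Y_\Sigma}$ is a $G$-equivariant line bundle on $Y_\Sigma$, so its descent along the quotient map $Y_\Sigma\to Y_\Sigma/G=X_{\bold{\Sigma}}$ (given by taking $G$-invariants after twisting) is a line bundle; this is exactly $\Mscr_{\bold{\Sigma},b}$. The tensor identity $\chi_{-b-b'}\cong\chi_{-b}\otimes\chi_{-b'}$ of characters of $G$ then gives $\Mscr_{\bold{\Sigma},b+b'}\cong\Mscr_{\bold{\Sigma},b}\otimes \Mscr_{\bold{\Sigma},b'}$, so the map $b\mapsto \Mscr_{\bold{\Sigma},b}$ is a morphism of abelian groups $\ZZ^k\to \Pic(X_{\bold{\Sigma}})$.

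For the identities in \eqref{def:reflexive} and \eqref{def:module}, I would fix a cone $\sigma'\in \Sigma$ with $\sigma'(1)=\{n_i\mid i\in J\}$ and work on the corresponding toric affine open $U_{\sigma'}\subset X_\Sigma$. Lifting to $Y$, this chart is the $G$-quotient of $U_{\sigma'}^{\mathrm{top}}=\{z\in Y\mid z_i\neq 0\text{ for }i\notin J\}\subset Y_\Sigma$. Sections of $\pi_{s,\ast}\Mscr_{\bold{\Sigma},b}$ over $U_{\sigma'}$ are, by the descent description, the $\chi_b$-semi-invariant regular functions on $U_{\sigma'}^{\mathrm{top}}$, i.e., $k$-linear combinations of Laurent monomials $x^a$ with $a\in\ZZ^k$, $a_i\geq 0$ for $i\in J$, and $\sum_i a_i\beta_i=\chi_b$ in $X(G)$. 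Writing $a=\rho(m)+b$ for $m\in M$ turns this into the lattice condition $\langle m,n_i\rangle\geq -b_i$ for $i\in J$.

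This last set is precisely the standard toric description of $\Gamma(U_{\sigma'},\Oscr(D_b))$ (cf.\ \cite[\S4.3]{CoxLittleSchenck}) and it is also the restriction of $\M_b^{(\sigma')}$, the module obtained by running the construction \eqref{eq:Mbdef} for the cone $\sigma'$ alone. Hence the three sheaves $\Oscr(D_b)$, $\Mscr_{\Sigma,b}$, $\pi_{s,\ast}\Mscr_{\bold{\Sigma},b}$ agree on each chart $U_{\sigma'}$. Since the identifications are given by the same $\chi^m$-basis, they glue to equalities of subsheaves of the constant sheaf $k(X_\Sigma)$, proving \eqref{def:reflexive}. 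Taking global sections intersects the local conditions over all maximal $\sigma'\in\Sigma$; because $\{n_i\mid 1\leq i\leq k\}$ is exactly the union of the one-dimensional cones of $\Sigma$, the resulting condition is $\langle m,n_i\rangle\geq -b_i$ for \emph{all} $i$, which is $M_b$, giving \eqref{def:module}.

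The only real bookkeeping step is the weight/character computation in the middle paragraph; once the dictionary $(a_i)\leftrightarrow (\langle m,n_i\rangle+b_i)$ is in place, everything else is a standard toric gluing and does not use any property specific to the three-dimensional Gorenstein setting.
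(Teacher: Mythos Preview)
Your argument is correct and follows essentially the same route as the paper: the first two claims are immediate from the fact that $\mu_s$ is an isomorphism (which is what your descent remark amounts to), and the identities in \eqref{def:reflexive}, \eqref{def:module} are checked locally on the affine charts indexed by maximal cones and then glued. The only difference is one of style: where the paper simply cites \cite[Proposition~5.3.7]{CoxLittleSchenck} for $\Oscr(D_b)=\Mscr_{\Sigma,b}$, Proposition~\ref{prop:ocovst} for the stacky charts, and \cite[Proposition~4.3.2]{CoxLittleSchenck} for the global sections, you carry out the underlying semi-invariant/monomial computation explicitly via the dictionary $a=\rho(m)+b$ already set up before \eqref{eq:Mbdef}.
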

\begin{proof} Since $\mu_s$ is an isomorphism the first two claims are obvious. The first equality in \eqref{def:reflexive} is \cite[Proposition 5.3.7]{CoxLittleSchenck}.
The last equality in \eqref{def:reflexive} is local on $X_\Sigma$ and can be verified on the open covering given by the maximal cones using Proposition \ref{prop:ocovst}. 
Finally \eqref{def:module} follows from \cite[Proposition 4.3.2]{CoxLittleSchenck}.
\end{proof}

Furthermore, for $m\in M$ we have by \cite[Theorem 9.1.3]{CoxLittleSchenck}
  \begin{equation}
\label{eq:cohomMb}
H^p(X_{\Sigma},\Oscr(D_{{b}}))_m=\tilde H^{p-1}(V_{D_{{b}},m},k),
\end{equation}
  where 
\begin{align}\label{vdbm}
V_{D_{{b}},m}&=\bigcup_{\s\in \Sigma} {\rm conv}\{ v\in
  \s(1)\mid \la m,v\ra<-b_v\}\\\nonumber
&=\bigcup_{\s\in \Sigma} {\rm conv}\{v\in
  \s(1)\mid s^b_v(m)=-\},
\end{align} 
and where we denote 
\begin{equation}
\label{eq:signdef0}
 s^b_{v}(m)=\text{sign of }(\la m,v\ra +b_v). 
\end{equation}
\begin{convention}{}
\label{conv}
Except for \S\ref{subsec:convind},\ref{sec:triangle} we assume that there is $m\in M$ such that $\la m,n_i\ra=1$ for all $1\leq i\leq k$.
In particular
$X_\s$ is Gorenstein  
(see e.g. \cite[Proposition 11.4.12]{CoxLittleSchenck}).  Changing bases we may assume $m=(0,\ldots,0,1)$
and hence $\s$ is a cone over a convex lattice polyhedron $P\subseteq \RR^{n-1}\times\{1\}\subset \RR^n=M_{\RR}$, and we write $\sigma=\sigma_P$. 
We abbreviate $R_{\s_P}, X_{\s_P}$ as $R_P$, $X_P$, respectively. 
\end{convention}

\subsection{Tilting bundles and NCCRs}
Recall that a tilting bundle on an algebraic stack $\Yscr$ is a vector bundle $\Tscr$ such that the following conditions
hold:
\begin{enumerate}
\item[(T1)]
$\Tscr$ generates $D_{\Qch}(\Yscr)$;
\item[(T2)]
$\Ext^i_{\Yscr}(\Tscr,\Tscr)=0$ for $i>0$.
\end{enumerate}
We then have $D_{\Qch}(\Yscr)\cong D(\Lambda)$ where $\Lambda=\End_{\Yscr}(\Tscr)$. 
If $\Yscr$ is a noetherian Deligne Mumford stack then 
we write $\Dscr(\Yscr)$  for $D^b_{\coh}(\Yscr)$.  
Similarly if $\Lambda$ is a noetherian ring then we write 
$\Dscr(\Lambda)=D^b_f(\Lambda)$.
If $\Yscr$ is a smooth separated DM-stack then $D_{\Qch}(\Yscr)$ is compactly generated by $\Dscr(\Yscr)$ (see \cite[Theorem B]{HallRydh})
and hence if $\Tscr$ is a tilting bundle on $\Yscr$ then $\Tscr$ ``classically
generates'' $\Dscr(\Yscr)$ (see \cite{BondalVdB}) and moreover if $\Lambda=\End_{\Yscr}(\Tscr)$ is noetherian then
$\Dscr(\Yscr)\cong \Dscr(\Lambda)$. 
Recall the following
\begin{proposition} \label{sec:standard}
Let $P$ be as in Convention \ref{conv}. Choose a triangulation of $P$ without extra vertices\footnote{The existence of such triangulations follows from the theory of secondary
fans \cite{GKZ,GKZbook}. See \cite[Proposition 15.2.9]{CoxLittleSchenck}.} 
and let $\Sigma$ be the corresponding fan. 
Let $\Tscr$ be a tilting bundle on $X_{\bold{\Sigma}}$. Then $\Lambda=\End_{X_{\bold{\Sigma}}}(\Tscr)$ is an NCCR for $R_P$ corresponding
to $M=\Gamma(X_{\bold{\Sigma}},\Tscr)$.

If $\Tscr$ is toric 
of the form
$\Tscr=\bigoplus_{b\in S} \Mscr_{\bold{\Sigma},b}$ then $\Lambda=\End_{R_P}(\bigoplus_{b\in S} \M_b)$.
\end{proposition}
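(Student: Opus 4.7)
The plan is to verify the three NCCR axioms for $\Lambda$ --- finite global dimension, Cohen--Macaulayness over $R_P$, and the form $\Lambda=\End_{R_P}(M)$ --- using the crepant resolution picture provided by the triangulation together with the tilting hypothesis on $\Tscr$.

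First I would identify $\pi\colon X_{\bold{\Sigma}}\to X_{\sigma_P}$ as a crepant (stacky) resolution. Convention~\ref{conv} puts every ray generator of $\sigma_P$ on the height-one affine hyperplane $\langle m,\cdot\rangle=1$, and the ``no extra vertices'' hypothesis forces the same for every ray of $\Sigma$; the standard toric formula for the canonical divisor then yields $K_{X_{\bold{\Sigma}}}=\pi^*K_{X_{\sigma_P}}$. Simpliciality of $\Sigma$ makes $X_{\bold{\Sigma}}$ a smooth separated DM stack, and normality of $X_{\sigma_P}$ (whose singular locus has codimension~$\geq 2$) makes $\pi$ an isomorphism over an open $U\subset X_{\sigma_P}$ whose complement has codimension~$\geq 2$. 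Finite global dimension of $\Lambda$ is then immediate from the derived equivalence $\Dscr(X_{\bold{\Sigma}})\cong\Dscr(\Lambda)$ recalled just before the proposition, since $\Dscr(X_{\bold{\Sigma}})$ has finite cohomological dimension by smoothness.

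Next I would identify $\Lambda$ with an endomorphism ring over $R_P$. By (T2) and the projection formula,
\begin{equation*}
\Lambda=\Hom_{X_{\bold{\Sigma}}}(\Tscr,\Tscr)=R\Gamma(X_{\bold{\Sigma}},\Tscr^\vee\otimes\Tscr)=R\Gamma(X_{\sigma_P},R\pi_*(\Tscr^\vee\otimes\Tscr)).
\end{equation*}
The tilting vanishing of $\Ext^{>0}(\Tscr,\Tscr)$ combined with affineness of $X_{\sigma_P}$ (which degenerates Leray) forces $R^i\pi_*(\Tscr^\vee\otimes\Tscr)=0$ for $i>0$, so $\Lambda=\pi_*(\Tscr^\vee\otimes\Tscr)$. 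Setting $M=\pi_*\Tscr$, the codimension-one iso of the previous step forces both $M$ and $\pi_*(\Tscr^\vee\otimes\Tscr)$ to be reflexive $R_P$-modules (pushforward of a vector bundle under a proper birational map from a smooth source to a normal target that is an isomorphism in codimension one lands in reflexive sheaves). The natural map $\pi_*(\Tscr^\vee\otimes\Tscr)\to\Hom_{R_P}(M,M)$ is an isomorphism over $U$ by flat base change, so reflexivity promotes it to a global isomorphism $\Lambda\cong\End_{R_P}(M)$. That $\Lambda$ is Cohen--Macaulay as an $R_P$-module is then the standard crepant-resolution principle: the pushforward of any vector bundle under a crepant resolution of an affine Gorenstein toric singularity is maximal Cohen--Macaulay.

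For the toric refinement, once $\Tscr=\bigoplus_{b\in S}\Mscr_{\bold{\Sigma},b}$, chasing the identifications of Lemma~\ref{lem:relations} --- $\pi_{s,*}\Mscr_{\bold{\Sigma},b}=\Mscr_{\Sigma,b}$ and $\Gamma(X_\Sigma,\Mscr_{\Sigma,b})=\M_b$ --- gives $M=\bigoplus_{b\in S}\M_b$, and the previous paragraph delivers $\Lambda=\End_{R_P}(\bigoplus_{b\in S}\M_b)$. The main obstacle is the compatibility in the third step between the sheaf-theoretic endomorphism ring on $X_{\bold{\Sigma}}$ and the module-theoretic one over $R_P$; this is exactly where the crepant resolution structure (codimension-one iso together with the tilting vanishing) does the essential work.
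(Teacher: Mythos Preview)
Your overall strategy matches the paper's: finite global dimension from the derived equivalence with a smooth stack, the identification $\Lambda\cong\End_{R_P}(M)$ via a codimension-one comparison, and Cohen--Macaulayness from crepancy. The paper carries this out by factoring $\pi=\tau\circ\pi_s$ and observing that each factor induces a monoidal equivalence on reflexive sheaves; it then cites \cite[Corollary~2.8]{SVdB4} together with Lemma~\ref{rmk:crepant} for the Cohen--Macaulay step.

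However, two of the general principles you invoke are false as stated. First, ``pushforward of a vector bundle under a proper birational map \ldots\ that is an isomorphism in codimension one lands in reflexive sheaves'' fails whenever there is an exceptional \emph{divisor}: for the crepant resolution of the $A_1$ surface singularity, $\pi_*\Oscr(-E)$ is the maximal ideal, which is not reflexive. What is needed is that $\pi$ be \emph{small} (no exceptional divisors), so that the complement of $\pi^{-1}(U)$ has codimension~$\geq 2$ in the \emph{source} and Hartogs applies there. This is exactly what the ``no extra vertices'' hypothesis buys --- it forces $\Sigma$ and $\sigma_P$ to have the same rays, so $\tau$ contracts no divisor --- and the paper invokes it precisely at this point. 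You use ``no extra vertices'' only for crepancy, and then appeal to normality of the target, which is not enough. Second, ``the pushforward of any vector bundle under a crepant resolution \ldots\ is maximal Cohen--Macaulay'' fails for the same example. The correct statement (Grothendieck duality with $\omega_\pi\cong\Oscr$) is that $\pi_*\Escr$ is MCM provided $R^{>0}\pi_*\Escr=0$ and $R^{>0}\pi_*\Escr^\vee=0$; for $\Escr=\Tscr^\vee\otimes\Tscr$ both vanishings follow from (T2) since $\Escr\cong\Escr^\vee$. With these two corrections your argument goes through and is essentially the paper's.
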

\begin{proof} 
We proceed similarly as in the proof of \cite[Corollary 2.4]{SVdB4}. 
Since $X_{\bf\Sigma}$ is smooth and $\Dscr(X_{\bf\Sigma})\cong \Dscr(\Lambda)$, $\gldim \Lambda<\infty$. 
It follows from Proposition \ref{prop:ocovst} (c.f. proof of Lemma \ref{rmk:crepant}) that $\pi_s$ is identity in codimension $1$. Hence $(\pi_s)_*$ defines a (monoidal) equivalence between categories of reflexive sheaves on $X_{\bf\Sigma}$ and $X_\Sigma$. As $\tau$ does not contract a divisor by the hypothesis on the triangulation, 
 $\tau_*$ defines a (monoidal) equivalence between categories of reflexive sheaves on $X_\Sigma$ and $X_P$. Thus, $\End_{X_\Sigma}(\Tscr)\cong \End_{R_P}(\Gamma(X_{\bf\Sigma},\Tscr))$ as $\Gamma(X_P,(\tau\pi_s)_*\Tscr)=\Gamma(X_{\bf\Sigma},\Tscr)$. 

To show that $\Lambda$ is Cohen-Macaulay one can proceed as in \cite[Corollary 2.8]{SVdB4} thanks to Lemma \ref{rmk:crepant}. The last claim follows from \eqref{def:module}.
\end{proof}

\section{Strategy}
As alluded to in the introduction our main steps towards the construction of an NCCR of $R_P$ (for $2$-dimensional $P$) are the following:
\begin{enumerate}
\item\label{s1}{}
Embed $P$ in a rectangle (resp. lattice triangle) $P_0$, and choose a triangulation of $P_0$ which contains a triangulation of $P$ without extra vertices.
To obtain these triangulations we use
 Gulotta's (see \S\ref{Gulotta})
(resp. the Ishii-Ueda (see \S\ref{IshiiUeda})) inductive procedures. 

\noindent
Let $\Sigma_0$, $\Sigma$ be the fans corresponding to the triangulations of $P_0$ and $P$.
\item\label{s2}
Construct a split tilting bundle $\Tscr_0$ on the stacky crepant resolution $X_{\bold{\Sigma}_0}$ of $R_{P_0}$.

\item\label{s3}
Restrict $\Tscr_0$ to $X_{\bold{\Sigma}}$ to obtain a tilting bundle $\Tscr$ on $X_{\bold{\Sigma}}$ (this is not a formal step as the property of being tilting is not local).

\item\label{s4}
Now $\Lambda=\End_{X_{\bold{\Sigma}}}(\Tscr)$ yields an NCCR  of $R_P$ by Proposition \ref{sec:standard}.
\end{enumerate}
The vanishing
properties (T2) for $\Tscr_0$, $\Tscr$ in \eqref{s2}\eqref{s3} are proved using standard toric geometry (see Lemmas \ref{lem:pretilting}, \ref{lem:pretiltingIU}), while the generation property (T1) for $\Tscr$ in \eqref{s3}
follows formally from the corresponding property of $\Tscr_0$ in \eqref{s2} (generation is compatible with restriction).
The verification of the  latter constitutes the heart of our proof.
 We proceed as follows:

\begin{enumerate}[(a)]
\item\label{s21} We start with a standard NCCR $\Lambda_0$ of
  $R_{P_0}$ given by $\oplus_{b\in S_0}\mathsf{M}_{b}$, $S_0=\{(b_v)_{v\in
    V_0'}\}$, where $V'_0$ is the set of the vertices of $P_0$ ($V'_0$ yields the \emph{minimal} presentation of
  $\sigma_{P_0}$). Clearly, depending on whether $P_0$ is a rectangle or a triangle, we have $|V'_0|=4$ or $3$.  

\item\label{s22} 
Let $V_0$ be the set of vertices in the triangulation in \eqref{s1}
($V_0$ gives a \emph{non-minimal} presentation of $\sigma_{P_0}$). We
develop an inductive procedure called \emph{compatible convex
  induction} (see \S\ref{subsec:convind}) which extends the collection
$S_0=\{(b_v)_{v\in V'_0}\}$ from \eqref{s21} to a collection
${S}=\{(\tilde{b}_v)_{v\in V_0}\}$ (thus $S_0$ and ${S}$ are in bijection  via $\tilde{(-)}$) such that for $b,b'\in S_0$:
\begin{equation}
\label{eq:compatprops}
M_b=M_{\tilde{b}}\text{ and }M_{b-b'}=M_{\tilde{b}-\tilde{b}'}.
\end{equation}
The construction of $\tilde{b}$ from $b$ is is done inductively and matching the inductive
 construction of the triangulation of $P_0$ (see \eqref{s1}).
\item\label{s2tilt}
We define $\Tscr_0:=\oplus_{b\in {S}}\Mscr_{\bold{\Sigma}_0,b}$ 
and we 
verify using the standard results on the cohomology of rank one reflexive 
sheaves on 3-dimensional toric varieties 
(see Lemmas \ref{lem:pretilting}, \ref{lem:pretiltingIU}) 
that $\Tscr_0$ satisfies (T2). This verification is done inductively.

\item\label{s2Lambda0}
We use \eqref{eq:compatprops} to deduce that $\Lambda_0\cong \End_{X_{\bold{\Sigma}_0}}(\Tscr_0)$. 

\item\label{semiorth}
We show that $\Dscr(X_{\bold{\Sigma_0}})$ has no non-trivial semi-orthogonal decompositions using the fact that it has trivial relative Serre functor over $X_P$ (see Lemma \ref{relSerreaff}, Corollary \ref{cor:sod}).

\item\label{s2sod}
Since $\Perf(\Lambda_0)\cong \Perf(\End_{X_{\bold{\Sigma}_0}}(\Tscr_0))$ 
 is fully faithfully embedded in $\Dscr(X_{\bold{\Sigma_0}})$ by (T2) (see \eqref{s2tilt}), and since $\gldim \Lambda_0<\infty$ by \eqref{s21} we have that $\Perf(\Lambda_0)=\Dscr(\Lambda_0)$ and moreover the embedding is admissible by \cite[Lemma 1.1.1]{PolVdB}. So it is a factor in semi-orthogonal decomposition of  $\Dscr(X_{\bold{\Sigma_0}})$.
We now use \eqref{semiorth}  to conclude that $\Dscr(\Lambda_0)\cong \Dscr(X_{\bold{\Sigma}_0})$  and hence $\Tscr_0$ satisfies (T1).
\end{enumerate} 

\medskip

Our constructions were inspired by Bocklandt's result \cite[Corollary 4.7]{Bocklandt} showing that Cohen-Macaulay modules of covariants are ``preserved under projection''.
In fact using Bocklandt's result we may prove that $\Lambda$ is Cohen-Macaulay, without (re)invoking toric geometry (see Appendix \ref{appA} for more details).  However we have been unable
to prove directly that $\Lambda$ is an NCCR. %See Remark \ref{rem:Vol} below. 

\section{Convex induction}\label{subsec:convind}
For $t=(t_1,\dots,t_k)$, $0\leq t_i\leq 1$, $\sum_i t_i=1$, we set 
\begin{equation}
\label{eq:basic}
b_{t,-}=(b_1,\dots,b_k,\left\lfloor \sum_i t_i b_{i}\right\rfloor),\quad
b_{t,+}=(b_1,\dots,b_k,\left\lceil \sum_i t_ib_{i}\right\rceil),
\end{equation}
where $\lfloor x\rfloor$, $\lceil x\rceil$ denote the largest (resp. the smallest)  integer not greater (resp. smaller) than $x$. 

\begin{lemma}\label{newpresenetation}
If $\s$ is presented as $[n_1,\dots,n_k,n_{k+1}]$, 
where $n_{k+1}=\sum_i t_i n_i\in N$ for some $t=(t_1,\dots,t_k)$,  $0\leq t_i\leq 1$, $\sum_i t_i=1$, then 
$M_b=M_{b_{t,-}}=M_{b_{t,+}}$. 
\end{lemma}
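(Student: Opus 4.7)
The plan is to unwind the definitions and exploit that the extra inequality corresponding to $n_{k+1}$ is forced by the inequalities coming from $n_1,\dots,n_k$, once one accounts for the integrality of the pairing $\langle -,-\rangle$ on $M\times N$. Concretely, by \eqref{eq:Mbdef} the modules $M_{b_{t,\pm}}$ are obtained from $M_b$ by adjoining a single additional half-space constraint:
\[
M_{b_{t,-}}=\{m\in M_b\mid \langle m,n_{k+1}\rangle\geq -\lfloor \textstyle\sum_i t_i b_i\rfloor\},\qquad M_{b_{t,+}}=\{m\in M_b\mid \langle m,n_{k+1}\rangle\geq -\lceil \textstyle\sum_i t_i b_i\rceil\}.
\]
Therefore the inclusions $M_{b_{t,-}}\subseteq M_b$ and $M_{b_{t,+}}\subseteq M_b$ are immediate.

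For the reverse direction I would use the convex combination $n_{k+1}=\sum_i t_i n_i$. For any $m\in M_b$ the linearity of the pairing gives
\[
\langle m,n_{k+1}\rangle=\sum_i t_i\langle m,n_i\rangle\geq -\sum_i t_i b_i,
\]
where the inequality uses $\langle m,n_i\rangle\geq -b_i$ together with $t_i\geq 0$. Since $n_{k+1}\in N$ and $m\in M$, the number $\langle m,n_{k+1}\rangle$ is an integer, so we may strengthen this to $\langle m,n_{k+1}\rangle\geq \lceil -\sum_i t_i b_i\rceil=-\lfloor \sum_i t_i b_i\rfloor$. This is exactly the extra constraint defining $M_{b_{t,-}}$, hence $M_b\subseteq M_{b_{t,-}}$.

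Finally, because $\lfloor \sum_i t_i b_i\rfloor \leq \lceil \sum_i t_i b_i\rceil$, the defining inequality of $M_{b_{t,+}}$ is weaker than that of $M_{b_{t,-}}$, so $M_{b_{t,-}}\subseteq M_{b_{t,+}}$ and combining with the already-noted $M_{b_{t,+}}\subseteq M_b$ closes the loop $M_b\subseteq M_{b_{t,-}}\subseteq M_{b_{t,+}}\subseteq M_b$.

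I do not foresee a genuine obstacle here; the statement is a direct computation and the only subtle point is the integrality step that converts the real inequality $\langle m,n_{k+1}\rangle\geq -\sum_i t_i b_i$ into the sharper floor/ceiling version. This is precisely the reason the floors and ceilings appear in the definition \eqref{eq:basic}.
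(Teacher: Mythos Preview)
Your proof is correct and follows essentially the same argument as the paper's: both use the trivial inclusions $M_{b_{t,\pm}}\subseteq M_b$, then obtain the reverse inclusion from the convex-combination inequality $\langle m,n_{k+1}\rangle\ge -\sum_i t_i b_i$ together with the integrality of $\langle m,n_{k+1}\rangle$. The only cosmetic difference is that the paper handles $M_{b_{t,+}}$ and $M_{b_{t,-}}$ directly in parallel, whereas you close the loop via the chain $M_b\subseteq M_{b_{t,-}}\subseteq M_{b_{t,+}}\subseteq M_b$.
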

\begin{proof}
  The inclusions $M_{b_{t,-}},M_{b_{t,+}}\subseteq M_b$ are trivial,
  while for $m\in M_b$ we have $\la m,n_{i}\ra\geq -b_{i}$ for all $i$,
  and thus
  $\la m,n_{k+1}\ra \geq -\sum_i t_i b_{i}\geq -\lceil \sum_i t_i
  b_{i}\rceil$,
  so $m\in M_{b_{t,+}}$, and as $\la m,n_{k+1}\ra \in \ZZ$ we also
  have $\la m,n_{k+1}\ra \geq -\lfloor \sum_i t_i b_i\rfloor$, implying
  $m\in M_{b_{t,-}}$.
\end{proof} 

If  $a\in \RR$ then by convention its ``sign'' will be $+$ if $a\geq 0$ and $-$ otherwise.
For $b=(b_v)_{v\in S}\in \ZZ^l$, with $S\subset N$, and $m\in M$
recall (from \eqref{eq:signdef0})
\begin{equation}
\label{eq:signdef}
 s^b_{v}(m)=\text{sign of }(\la m,v\ra +b_v). 
\end{equation}
If we use indexed vectors $(v_i)_i$ then we also write $s^b_i(m)$ for $s^b_{v_i}(m)$. 
\begin{remark}
The signs $s^b_v(m)$ appear often in toric geometry. See e.g.\ \eqref{eq:cohomMb} and Proposition \ref{lemmaB} below.
\end{remark}
Let notation be as in Lemma \ref{newpresenetation}. Applying \eqref{eq:signdef} with $(v_1,\ldots,v_k,v_{k+1})=(n_1,\ldots,n_k,n_{k+1})$
 we have $s^b_i(m)=s^{b_{t,+}}_i(m)=s^{b_{t,-}}_i(m)$ for $1\leq i\leq k$ since  $(b_{t,\pm})_i=b_i$ for $1\leq i\leq k$.
We will need the following simple lemma.
\begin{lemma}\label{lem:sign}
We have $s^{b_{t,-}}_{k+1}(m),s^{b_{t,+}}_{k+1}(m)\in \{s^b_i(m)\mid 1\leq i\leq k, t_i\neq 0\}$. 
\end{lemma}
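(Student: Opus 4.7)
The plan is to do a case split on whether the signs $\{s^b_i(m) \mid 1\le i\le k,\ t_i\ne 0\}$ are all equal or not. If both signs $+$ and $-$ occur in this set, then the set contains everything a sign can be and the claim is vacuous, so the only real work is to handle the ``uniform sign'' case.

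First I would treat the case where $s^b_i(m)=+$ for every $i$ with $t_i\neq 0$, i.e.\ $\langle m,n_i\rangle + b_i \ge 0$ for those $i$. Taking the convex combination with weights $t_i$ yields
\[
\langle m,n_{k+1}\rangle + \sum_i t_i b_i \;=\; \sum_i t_i\bigl(\langle m,n_i\rangle+b_i\bigr) \;\ge\; 0,
\]
so $\langle m,n_{k+1}\rangle \ge -\sum_i t_i b_i \ge -\lceil\sum_i t_i b_i\rceil$, giving $s^{b_{t,+}}_{k+1}(m)=+$. For the floor version, since $\langle m,n_{k+1}\rangle\in\ZZ$, the inequality $\langle m,n_{k+1}\rangle\ge -\sum_i t_i b_i$ implies $\langle m,n_{k+1}\rangle\ge \lceil-\sum_i t_i b_i\rceil=-\lfloor\sum_i t_i b_i\rfloor$, so $s^{b_{t,-}}_{k+1}(m)=+$ as well.

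Next I would handle the case where $s^b_i(m)=-$ for every $i$ with $t_i\neq 0$, i.e.\ $\langle m,n_i\rangle + b_i<0$. Since these quantities are integers, this forces $\langle m,n_i\rangle + b_i\le -1$, and taking the convex combination (using $\sum_i t_i=1$) gives $\langle m,n_{k+1}\rangle+\sum_i t_i b_i\le -1$. Therefore $\langle m,n_{k+1}\rangle+\lfloor\sum_i t_i b_i\rfloor\le -1<0$, so $s^{b_{t,-}}_{k+1}(m)=-$. For the ceiling version I would use $\lceil x\rceil<x+1$, giving the integer
\[
\langle m,n_{k+1}\rangle+\bigl\lceil\textstyle\sum_i t_ib_i\bigr\rceil \;<\; \langle m,n_{k+1}\rangle+\textstyle\sum_i t_ib_i+1\;\le\;0,
\]
and since it is an integer strictly less than $0$ it is $\le -1$, hence $s^{b_{t,+}}_{k+1}(m)=-$.

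The only mild subtlety, and hence the main ``obstacle'', is the ceiling estimate in the negative case: a naive bound only gives a non-strict inequality $\le 0$, and one needs the strict inequality $\lceil x\rceil<x+1$ combined with the integrality of $\langle m,n_{k+1}\rangle$ to actually conclude the sign is $-$. Everything else reduces to the convex combination identity $\langle m,n_{k+1}\rangle=\sum_i t_i\langle m,n_i\rangle$ and the elementary inequalities $\lfloor x\rfloor\le x\le\lceil x\rceil$.
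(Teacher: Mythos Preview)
Your proof is correct and follows essentially the same approach as the paper: reduce to the case where all relevant signs coincide, then take the convex combination and use integrality to control the floor and ceiling. The paper's version is more terse (it simply says ``Applying $\lfloor?\rfloor$, $\lceil?\rceil$ yields the desired conclusion'' after obtaining $\langle m,n_{k+1}\rangle+\sum_i t_ib_i\le -1$), but the content is the same as what you spell out.
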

\begin{proof} We may assume that all signs $s^b_i(m)$ for  $1\leq i\leq k$, $t_i\neq 0$ are the same, otherwise there is nothing to prove. 
Assume that they are all $-$; i.e. $\langle m,n_i\rangle+b_i\le -1$ for $1\leq i\leq k$, $t_i\neq 0$. Then $\langle m,n_{k+1}\rangle +\sum_i t_i b_i\le-1$.
Applying $\lfloor?\rfloor$, $\lceil?\rceil$ yields the desired conclusion. The case where all signs are $+$ is similar.
\end{proof}
\begin{definition} Let $k,l\in \NN$. An \emph{induction datum} $(t^{(i)}_j)_{ij}\in \RR$ is a collection of vectors $
t^{(i)}=(t^{(i)}_j)_j\in \RR^{i-1}$ for $k<i\le k+l$,
such that
$
0\leq t^{(i)}_j\leq 1,\quad \sum_{j=1}^{i-1} t^{(i)}_j=1
$.
\end{definition}
\begin{definition}
\label{def:convex1}
Let $[n_1,\dots,n_k]\in N^k$.  
We say  that 
$[n_1,\ldots,n_{k+l}]\in N^{k+l}$ 
is \emph{convexly induced} from  $[n_1,\dots,n_k]\in N^k$ with induction datum $(t_j^{(i)})_{i,j}$ if 
for $i>k$
\[
n_{i}=
\sum_{j=1}^{i-1}t^{(i)}_j n_j. 
\]
\end{definition}
\begin{definition}
Let $\mathfrak{s}\in \{\pm\}^{l}$ be a \emph{sign sequence}.
We say that an integer vector $\tilde{b}\in \ZZ^{k+l}$ is {\em
  \convindmod} from $b\in \ZZ^k$ with sign sequence $\mathfrak{s}$
and induction datum $(t_j^{(i)})_{i,j}$
if 
$\tilde{b}$ is  obtained from $b$ 
via $\tilde{b}_{i}=(\tilde{b}_{\leq i-1})_{t^{(i)},\mathfrak{s}_i}$ (see \eqref{eq:basic}) for $i>k$; i.e.
\[
\tilde{b}_i=
\begin{cases}
b_i &\text{if $1\leq i\leq k$}, \\
\lfloor\sum_{j=1}^{i-1}t^{(i)}_j \tilde{b}_{j}\rfloor &\text{if $i>k$ and $\mathfrak{s}_i=-$},\\
\lceil \sum_{j=1}^{i-1}t^{(i)}_j \tilde{b}_{j}\rceil&\text{if $i>k$ and $\mathfrak{s}_i=+$}.
\end{cases}
\]
  We say that $\tilde{b}$ is convexly induced
from $b$ if it is convexly induced for \emph{some} sign sequence (and a given induction datum).

Let $K\in \NN$. We say that
$\{\tilde{b}^{j}\mid 1\leq j\leq K\}\subset\ZZ^{k+l}$ is {\em
  compatibly \convindmod} from
$\{b^{j}\mid 1\leq j\leq K\}\subset \ZZ^k$ if for each $j$, $\tilde{b}^{j}$ is
convexly induced from $b^{j}$ for a fixed sign sequence and a fixed induction datum (i.e.\ they
are both independent of $j$).
\end{definition}
Below we will assume the induction datum is specified once and for all and we will usually not mention it afterwards.
\begin{corollary}\label{convind}
If $\tilde{b}$ is {\convindmod}  from $b$ and $I$ is a subsequence of $1,\ldots,k+l$ containing $1,\dots,k$
then $M_{b}=M_{b'}$ 
for
$b'=(\tilde{b}_i)_{i\in I}$.
\end{corollary}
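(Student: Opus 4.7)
The plan is an induction on $l$, iterating Lemma \ref{newpresenetation}. Write $\tilde{b}^{(i)} := (\tilde{b}_1, \ldots, \tilde{b}_i)$ for the truncation of $\tilde{b}$ to its first $i$ coordinates, so that $\tilde{b}^{(k)} = b$ and $\tilde{b}^{(k+l)} = \tilde{b}$. I claim that $M_b = M_{\tilde{b}^{(i)}}$ for every $k \le i \le k+l$, where $M_{\tilde{b}^{(i)}}$ is computed using the partial presentation $[n_1, \ldots, n_i]$.

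The base case $i = k$ is tautological. For the step from $i-1$ to $i$: by Definition \ref{def:convex1} we have $n_i = \sum_{j=1}^{i-1} t_j^{(i)} n_j$ with $0 \le t_j^{(i)} \le 1$ and $\sum_j t_j^{(i)} = 1$, and by the convex induction recipe $\tilde{b}_i = (\tilde{b}^{(i-1)})_{t^{(i)}, \mathfrak{s}_i}$ in the notation of \eqref{eq:basic}. Applying Lemma \ref{newpresenetation} to the presentation $[n_1, \ldots, n_i]$ yields $M_{\tilde{b}^{(i)}} = M_{\tilde{b}^{(i-1)}}$, which equals $M_b$ by the inductive hypothesis. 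In particular $M_b = M_{\tilde{b}}$.

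To finish, let $I \supseteq \{1, \ldots, k\}$ be any subsequence of $1, \ldots, k+l$, and let $b' = (\tilde{b}_i)_{i \in I}$. Then one has the sandwich $M_{\tilde{b}} \subseteq M_{b'} \subseteq M_b$: the right inclusion holds because $I \supseteq \{1, \ldots, k\}$, so every inequality defining $M_b$ already appears in the definition of $M_{b'}$; the left inclusion holds because $I \subseteq \{1, \ldots, k+l\}$, so every inequality defining $M_{b'}$ appears in the definition of $M_{\tilde{b}}$. Combined with $M_b = M_{\tilde{b}}$, these inclusions force $M_b = M_{b'}$.

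The only potentially subtle point is that the hypothesis of Lemma \ref{newpresenetation} is phrased in terms of the extended tuple still presenting the cone $\sigma$, whereas during the iteration we only know that each new vector $n_i$ is a convex combination of the preceding ones and lies in $N$. However, the proof of that lemma uses only the convex combination identity together with the integrality of $\langle m, n_i \rangle$, so the argument applies verbatim to the iterated setting and no further hypothesis is needed.
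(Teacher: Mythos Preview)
Your proof is correct and follows essentially the same route as the paper: repeated application of Lemma~\ref{newpresenetation} to get $M_b=M_{\tilde b}$, followed by the sandwich $M_{\tilde b}\subseteq M_{b'}\subseteq M_b$. Your version simply makes the induction explicit and spells out the (harmless) point that the lemma's hypothesis on the cone presentation is not actually used in its proof.
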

\begin{proof}
Repeated application of  Lemma  \ref{newpresenetation} yields $M_{\tilde{b}}=M_{b}$.  This implies $M_b=M_{\tilde{b}}\subset M_{b'}\subset M_b$ which gives the desired conclusion.
\end{proof}
\begin{lemma}\label{diff}
If $\{\tilde{b}_1,\tilde{b}_2\}$ is compatibly \convind from $\{b,b'\}$ 
then $\tilde{b}_1-\tilde{b}_2$ is \convind from $b-b'$ (for the same induction datum and a possibly different sign sequence). In consequence, $M_{b'_1-b'_2}=M_{b_1-b_2}$ with $b'_1,b'_2$ like $b'$ in Corollary \ref{convind}. 
\end{lemma}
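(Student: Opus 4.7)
\medskip

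\noindent\textbf{Proof plan for Lemma \ref{diff}.}
My plan is to argue by induction on $l$, the number of indices beyond the first $k$. The base case $l=0$ is trivial since there is nothing to induce. For the inductive step, suppose that $\tilde{b}_1-\tilde{b}_2$ restricted to the first $k+l-1$ coordinates has already been shown to be convexly induced from $b-b'$ with some sign sequence $(\mathfrak{s}'_j)_{k<j<k+l}$ (and the same induction datum $(t^{(i)}_j)$). I need to produce one more sign $\mathfrak{s}'_{k+l}\in\{+,-\}$ so that, writing $c_j:=\tilde{b}_{1,j}-\tilde{b}_{2,j}$,
\[
c_{k+l} \;=\; \operatorname{round}_{\mathfrak{s}'_{k+l}}\!\Bigl(\sum_{j<k+l} t^{(k+l)}_j\, c_j\Bigr),
\]
where $\operatorname{round}_-=\lfloor\cdot\rfloor$ and $\operatorname{round}_+=\lceil\cdot\rceil$.

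By the definition of compatible convex induction, setting $X=\sum_{j<k+l}t^{(k+l)}_j\tilde{b}_{1,j}$ and $Y=\sum_{j<k+l}t^{(k+l)}_j\tilde{b}_{2,j}$, one has $c_{k+l}=\operatorname{round}_{\mathfrak{s}_{k+l}}(X)-\operatorname{round}_{\mathfrak{s}_{k+l}}(Y)$ and $X-Y=\sum_{j<k+l}t^{(k+l)}_j c_j$. The key arithmetic fact I need is the following elementary identity: for any reals $X,Y$ and any sign $s\in\{+,-\}$,
\[
\operatorname{round}_s(X)-\operatorname{round}_s(Y) \;\in\; \bigl\{\,\lfloor X-Y\rfloor,\;\lceil X-Y\rceil\,\bigr\}.
\]
Writing $X=m+a$, $Y=n+b$ with $m,n\in\ZZ$ and $a,b\in[0,1)$, the four combinations of the signs of $a$ and $b-a$ split into a handful of easy subcases, each of which directly identifies the difference with either $\lfloor X-Y\rfloor$ or $\lceil X-Y\rceil$. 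Granted this, I simply take $\mathfrak{s}'_{k+l}$ to be whichever sign makes the equation hold, completing the inductive step. This case check is the only genuinely substantive moment of the proof; the main (minor) obstacle is merely to handle both $s=-$ and $s=+$ carefully, since the rule for choosing $\mathfrak{s}'_{k+l}$ is not uniform in $\mathfrak{s}_{k+l}$.

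For the concluding sentence, once $\tilde{b}_1-\tilde{b}_2$ is known to be convexly induced from $b_1-b_2:=b-b'$ with the same induction datum, Corollary \ref{convind} applies directly to the difference: for any subsequence $I$ of $\{1,\dots,k+l\}$ containing $\{1,\dots,k\}$, taking $b'_i:=(\tilde{b}_i|_I)$ for $i=1,2$ gives $b'_1-b'_2=(\tilde{b}_1-\tilde{b}_2)|_I$, whence
\[
M_{b'_1-b'_2} \;=\; M_{\tilde{b}_1-\tilde{b}_2} \;=\; M_{b_1-b_2}
\]
by Corollary \ref{convind} applied to the convexly induced vector $\tilde{b}_1-\tilde{b}_2$. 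This yields the stated consequence.
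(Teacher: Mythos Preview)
Your argument is correct and essentially identical to the paper's: the paper also reduces the inductive step to the elementary fact that $\lfloor x\rfloor-\lfloor y\rfloor,\ \lceil x\rceil-\lceil y\rceil\in\{\lfloor x-y\rfloor,\lceil x-y\rceil\}$ (which it highlights as the ``key ingredient'') and then invokes Corollary~\ref{convind}. Your write-up is simply a more explicit unpacking of the same induction.
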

\begin{proof}
  By definition $\tilde{b}_1$, $\tilde{b}_2$ are convexly induced from
  $b_1,b_2$, respectively, with the same sign sequence.  Let $\ast$ be
  a sign. We do not necessarily have
  $(\tilde{b}_{1,\leq i-1})_{t^{(i)},\ast}-(\tilde{b}_{2,\leq
    i-1})_{t^{(i)},\ast}=((\tilde{b}_1-\tilde{b}_2)_{\leq
    i-1})_{t^{(i)},\ast}$.
  However, as\footnote{This innocent looking formula is in fact the
    key ingredient of our proofs!}
  $\lf x\rf-\lf y\rf,\lceil x\rceil-\lceil y\rceil\in\{\lf x-y\rf, \lc
  x-y\rc\}$
  for any $x,y\in \RR$, we still have
  $(\tilde{b}_{1,\leq i-1})_{t^{(i)},\ast}-(\tilde{b}_{2,\leq
    i-1})_{t^{(i)},\ast}=(\tilde{b}_{1,\leq i-1}-\tilde{b}_{2,\leq
    i-1})_{t^{(i)},\ast'}$
  for a possibly different sign $\ast'$. This is sufficient by
  Corollary \ref{convind}.
\end{proof}
\subsection{Interval convex induction}
A simple version of convex induction consists of lattice points on an interval with the induction datum only referring to the nearest neighbours. 
\begin{definition}
Let $n_0,n_{r+1}\in N$ be end points of an interval $I$.
 Let
$\{n_1,\ldots,n_{r}\}\in N\cap  I$  be such that $n_0,n_1,\ldots,n_r,n_{r+1}$ are consecutive distinct points in $I$.
Assume furthermore we are given another ordering of these points 
\begin{equation}
\label{eq:ordering}
n'_1,\ldots,n'_{r+2}=n_{0},n_{r+1},n_{i_{1}}\ldots,n_{i_{r}}.
\end{equation}
Then for any $3\le j\le r+2$ there exist unique $j',j''<j$ such that $]n'_{j'},n'_{j''}[\cap\allowbreak \{n'_1,\ldots,n'_j\}=\{n'_j\}$.
In particular have
\begin{equation}
\label{eq:ici}
n'_j=(1-t)n'_{j'}+tn'_{j''}
\end{equation}
for some $t\in ]0,1[$. The \emph{interval induction datum} $\mathfrak{t}$ associated to the ordering \eqref{eq:ordering} is the induction datum obtained from \eqref{eq:ici}.
%
%
%An \emph{interval induction datum} is
%an induction datum $t^{(i)}_j$ with $i=3,\ldots,r+2$ for an ordering $n'_1,\ldots,n'_{r+2}=n_{0},n_{r+2},n_{i_{3}}\ldots,n_{i_{r+2}}$ of $\{n_0,\ldots,n_{r+1}\}$
%such that 
%for all $i$, $t^{(i)}_j$ is non-zero for at most two values $j^i_1$, $j^i_2$ of $j$
%and moreover $]n'_{j^i_1},n'_{j^i_2}[\cap \{n'_t\mid t\le i\}=\{n'_i\}$.
%

We say that $(b_{n_0},b_{n_{r+1}},b_{n_1}\ldots,b_{n_r})\in \ZZ^{r+2}$ is obtained by \emph{interval convex induction}
from $(b_{n_0},b_{n_{r+1}})\in \ZZ^2$ if it is obtained by convex induction from $(b_{n_0},b_{n_{r+1}})$ for an interval induction datum  (associated to some ordering of  $(n_i)_{i=0,\dots,r+1}$ as in \eqref{eq:ordering}) 
and an arbitrary sign sequence.
\end{definition}
\begin{lemma} \label{lem:intervalconvexinduction}
Let $(n_i)_i$ be as above.
Assume $(b_{n_0},b_{n_{r+1}},b_{n_1}\ldots,b_{n_r})\in \ZZ^{r+2}$ is obtained by interval convex induction 
from $(b_{n_0},b_{n_{r+1}})\in \ZZ^2$.
Then for all $m\in M$ the signs $s_{n_j}^b(m)$, $j=0,\ldots,r+1$, follow the
pattern $+\cdots+-\cdots-$ (possibly reflected and perhaps with no $+$ or $-$ present). 
\end{lemma}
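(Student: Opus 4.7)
The plan is to prove the sign pattern by induction on the number of points inserted by the interval convex induction. More precisely, I would show by induction on $j$, $2 \leq j \leq r+2$, that when the first $j$ points $n'_1,\ldots,n'_j$ from the ordering \eqref{eq:ordering} are listed in their natural interval ordering along $I$, the associated signs $s^{\tilde b}_{n'_i}(m)$ exhibit at most one sign change. The case $j=r+2$ then gives the claim, since ``at most one sign change'' along the interval is precisely the pattern $+\cdots+-\cdots-$ (possibly reflected, and possibly with one of the two blocks empty).

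The base case $j=2$ is trivial: two signs exhibit at most one change. For the inductive step, the crucial point is the geometric interpretation of the interval induction datum. By the uniqueness clause in its definition, $]n'_{j'},n'_{j''}[\cap\{n'_1,\ldots,n'_{j-1}\}=\emptyset$, so $n'_{j'}$ and $n'_{j''}$ are precisely the two interval-neighbours of $n'_j$ in $\{n'_1,\ldots,n'_{j-1}\}$, and $n'_j$ is inserted strictly between them. Moreover $n'_j=(1-t)n'_{j'}+tn'_{j''}$ with $t\in(0,1)$, so $t^{(j)}$ has nonzero entries only at positions $j'$ and $j''$. The value $\tilde b_{n'_j}$ is built by \eqref{eq:basic} from $(\tilde b_{n'_{j'}},\tilde b_{n'_{j''}})$ with weights $(1-t,t)$ and sign $\mathfrak{s}_j$.

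Applying Lemma \ref{lem:sign} to the extension of the presentation $[n'_1,\ldots,n'_{j-1}]$ by the convex combination $n'_j=(1-t)n'_{j'}+tn'_{j''}$ yields
\[
s^{\tilde b}_{n'_j}(m)\in\{s^{\tilde b}_{n'_{j'}}(m),\,s^{\tilde b}_{n'_{j''}}(m)\}.
\]
Thus the interval-ordered sign sequence on $\{n'_1,\ldots,n'_j\}$ is obtained from the interval-ordered sign sequence on $\{n'_1,\ldots,n'_{j-1}\}$ by inserting, between two adjacent entries, a sign that agrees with one of them. This operation cannot introduce a new sign change, so by the inductive hypothesis the new sequence still has at most one sign change, completing the induction.

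The only mild obstacle is unpacking the definition of the interval induction datum to confirm that the ``parents'' $n'_{j'},n'_{j''}$ are indeed the interval-adjacent neighbours of $n'_j$ within the previously inserted points; once this is in hand, the result reduces to a direct application of Lemma \ref{lem:sign} combined with the routine observation that inserting a repeated sign between two adjacent signs in a sequence does not create a new transition.
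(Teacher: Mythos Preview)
Your proposal is correct and follows precisely the approach of the paper, which simply states that the lemma follows by repeated application of Lemma~\ref{lem:sign}. Your inductive argument is nothing more than a careful unpacking of what ``repeated application'' means here: at each step the new point is inserted between its two interval-neighbours, Lemma~\ref{lem:sign} forces its sign to agree with one of them, and hence no new sign change is created.
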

\begin{proof} This follows by repeated application of Lemma \ref{lem:sign}.
\end{proof}

\section{Triangle convex induction}
\label{sec:triangle}
In this section we analyze a type of the convex induction that we will use for constructing a tilting bundle on a stacky resolution via the Ishii-Ueda approach. It is similar in spirit, but considerably more involved, than  interval convex induction and it will be  used only in \S\ref{IshiiUeda}. 

\begin{definition}
Let $v_{-1},v_0,v_{r+1}\in N$ be vertices of a solid triangle $T$ such that $]v_{-1},v_0[$, $]v_{-1},v_{r+1}[$ contain no lattice points and let $C$ be the convex hull of $N\cap(T\setminus \{v_{-1}\})$.
Put $\{v_1,\dots,v_r\}= N\cap \partial C\cap \operatorname{int} T$  such that $v_0,v_1,\dots,v_{r},v_{r+1}$ are consecutive boundary points of $C$.

 For $j=1,\dots,r$, $v_j$ is in the convex hull of $v_{-1},v_{j-1},v_{r+1}$. In other words there exist unique
$p_j,q_j,r_j\in [0,1]$ such that $p_j+q_j+r_j=1$ and
\begin{equation}
\label{eq:inductiondatum0}
v_j=p_j v_{-1}+q_jv_{j-1}+r_jv_{r+1}.
\end{equation} 
The \emph{triangle induction datum} $\mathfrak{t}$ is the induction  datum obtained from \eqref{eq:inductiondatum0}.

We say that $(b_{-1},b_0,b_{r+1},b_1,\dots,b_r)\in \ZZ^{r+3}$ is obtained by \emph{triangle convex induction} from $(b_{-1},b_0,b_{r+1})$ if it is obtained by convex induction from $(b_{-1},b_0,b_{r+1})$ for 
$\mathfrak{t}$ and a sign sequence corresponding entirely of $-$'s; i.e., for $1\leq j\leq r$
\begin{equation}\label{eq:defbj0}
b_{j}=b_{v_j}=\lf p_jb_{-1}+q_jb_{j-1}+r_jb_{r+1}\rf.
\end{equation} 
\end{definition}
The following is our main result concerning triangle convex induction. It will play a similar role as Lemma \ref{lem:intervalconvexinduction}.
\begin{proposition}[see \S\ref{sec:combred}] \label{lem:sup:pretiltingIU}
Let  $m\in M$ and let $b,b'$ be obtained by triangle convex induction from $(b_{-1},b_{0},b_{r+1})$, $(b'_{-1},b'_{0},b'_{r+1})$.
Put $c=b'-b$. 
Then the possible sign patterns for $s^c_{v_{-1}}(m),s^c_{v_0}(m),\ldots,s^c_{v_{r+1}}(m)$ are
\[
-\underbrace{+\cdots+}_p\underbrace{-\cdots-}_q\underbrace{+\cdots+}_r \text{ or }
+\underbrace{-\cdots-}_p\underbrace{+\cdots+}_q\underbrace{-\cdots-}_r 
\]
where $p,q$ and $r$ may be equal to zero.
\end{proposition}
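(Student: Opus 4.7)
The plan is to reformulate the statement in terms of the integer sequence $a_j:=\la m,v_j\ra+c_j$ for $j\in\{-1,0,1,\dots,r+1\}$, so that $s^c_{v_j}(m)$ records the sign of $a_j$. First I would derive a convex-induction recursion for the $a_j$'s. Since both $b$ and $b'$ arise from triangle convex induction with the all-minus sign sequence, $b_j=\lf p_jb_{-1}+q_jb_{j-1}+r_jb_{r+1}\rf$ and similarly for $b'$, for $1\le j\le r$. The identity $\lf x\rf-\lf y\rf\in\{\lf x-y\rf,\lc x-y\rc\}$ from the proof of Lemma~\ref{diff} gives $c_j\in\{\lf\hat c_j\rf,\lc\hat c_j\rc\}$ with $\hat c_j:=p_jc_{-1}+q_jc_{j-1}+r_jc_{r+1}$. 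Combining with the identity $\la m,v_j\ra=p_j\la m,v_{-1}\ra+q_j\la m,v_{j-1}\ra+r_j\la m,v_{r+1}\ra$ built into the induction datum yields the parallel recursion
\[
a_j\in\{\lf\hat a_j\rf,\lc\hat a_j\rc\},\qquad \hat a_j:=p_ja_{-1}+q_ja_{j-1}+r_ja_{r+1}.
\]

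Next I would prove the key sign-propagation lemma: since $p_j,q_j,r_j\ge 0$ and $p_j+q_j+r_j=1$, if $a_{-1},a_{j-1},a_{r+1}\ge 0$ then $\hat a_j\ge 0$ and hence $a_j\ge 0$; if all three are $\le -1$ then $\hat a_j\le -1$ and hence $a_j\le -1$. So the sign of $a_j$ always agrees with the sign of at least one of $a_{-1},a_{j-1},a_{r+1}$. In the easy case where $a_{-1}$ and $a_{r+1}$ have the same sign $s$, this forces every sign change along the arc $a_0,\dots,a_{r+1}$ to end on the sign $s$; since two consecutive sign changes cannot both end on the same sign, the arc has at most one sign change, and the full pattern fits the proposition's template with at least two of $p,q,r$ equal to zero.

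The harder case is when $a_{-1}$ and $a_{r+1}$ have opposite signs; here the lemma alone does not rule out three sign changes along the arc. I would exploit the convex geometry of the polygonal arc $v_0,\dots,v_{r+1}$, which lies in $\partial C\cap \operatorname{int} T$ with $v_{-1}$ on its concave side. Introducing the unique affine function $L$ on $N_\RR$ with $L(v_{-1})=a_{-1}$, $L(v_0)=a_0$, $L(v_{r+1})=a_{r+1}$, its zero line meets the convex arc in at most two points, so the ``unrounded'' values $L(v_j)=\alpha_ja_{-1}+\beta_ja_0+\gamma_ja_{r+1}$ (whose barycentric coefficients satisfy $\alpha_j=p_j+q_j\alpha_{j-1}$, $\beta_j=q_j\beta_{j-1}$, $\gamma_j=r_j+q_j\gamma_{j-1}$) have at most two sign changes on the arc, and convexity pins the sign of $L(v_{-1})$ to agree with the middle block when there are exactly two. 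The main obstacle I expect is the last step, namely showing that the rounding discrepancy $\delta_j:=a_j-L(v_j)$, which satisfies $\delta_j=q_j\delta_{j-1}+(c_j-\hat c_j)$ with $\delta_0=0$ and $|c_j-\hat c_j|<1$, cannot introduce a spurious additional sign change. Once this is established, a short case analysis on whether $a_0$ has the same or opposite sign to $a_{-1}$ together with the arc-pattern constraints recovers exactly the two templates $s\bar s^p s^q\bar s^r$ in the proposition.
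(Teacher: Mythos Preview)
Your easy case (when $a_{-1}$ and $a_{r+1}$ share a sign) is correct; the sign-propagation statement you prove is Lemma~\ref{lem:sign}, and it indeed forces at most one sign change along the arc. The hard case, however, has a genuine gap that you have correctly identified but not closed. The discrepancy $\delta_j=a_j-L(v_j)$ obeys only $\delta_j=q_j\delta_{j-1}+\epsilon_j$ with $|\epsilon_j|<1$ and $q_j=i_j/i_{j-1}\in(0,1)$; since the $q_j$ can be arbitrarily close to~$1$ (e.g.\ when all continued-fraction entries equal~$2$, where $q_j=(r{+}1{-}j)/(r{+}2{-}j)$ and the arc $v_0,\ldots,v_{r+1}$ degenerates to a straight line), the accumulated discrepancy is unbounded and nothing prevents it from flipping signs near the zero line of~$L$. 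No convexity argument on $L$ alone will rule this out, and the affine-function strategy does not seem salvageable without substantial new input.

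The paper bypasses this obstacle via a different idea: a \emph{right-truncation} property (Proposition~\ref{prop:main}) asserting that for every $1\le l\le r+1$ the tuple $(b_{-1},b_0,b_l,b_1,\ldots,b_{l-1})$ is again obtained by triangle convex induction, now anchored at $(b_{-1},b_0,b_l)$. This is a non-obvious combinatorial identity, proved via an explicit closed form \eqref{eq:bp} for the (normalised) $b_j$ in terms of Hirzebruch--Jung continued fractions, and its validity depends essentially on the fact that the sign sequence in triangle convex induction consists entirely of $\lfloor\,\rfloor$'s. Once it is available, the proposition is immediate: the only obstruction to the stated pattern is a configuration $s^c_{v_{-1}}(m)=s^c_{v_{j-1}}(m)=s^c_{v_l}(m)\ne s^c_{v_j}(m)$ with $1\le j<l$; truncating at $l$ (for both $b$ and $b'$, hence for $c$ via Lemma~\ref{diff}) puts you precisely in your ``easy'' situation with three anchors of equal sign, and Lemma~\ref{lem:sign} gives the contradiction. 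In short, the missing idea is to replace the fixed anchor $v_{r+1}$ by the movable anchor $v_l$, and the real work is proving that this is legitimate.
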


\subsection{An explicit expression for the triangle induction datum}
\subsubsection{Generalities}
\label{sec:recursion}
Assume $(a_l)_{l\in \ZZ}\in \RR$. Below we will consider solutions $(x_l)_{l}\in \RR$ to the second order recursion relation 
\begin{equation}
\label{eq:recursion}
x_{l+1}=a_{l}x_{l}-x_{l-1}.
\end{equation}
The following result follows easily from \eqref{eq:recursion}.
\begin{lemma} \label{lem:specialized} Assume $(x_l)_l,(y_l)_l\in \RR$ are solutions to \eqref{eq:recursion}.  Then
\[
\left|
\begin{matrix}
x_{l}&x_{l+1}\\
y_{l}&y_{l+1}
\end{matrix}
\right|
\]
is independent of $l$.
\end{lemma}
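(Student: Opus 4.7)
The plan is to show that $D_l := x_l y_{l+1} - x_{l+1} y_l$ is independent of $l$ by verifying that $D_{l+1} = D_l$ for all $l$; then the result follows by induction. This is the standard ``Wronskian'' computation for a linear second-order recurrence, adapted to the discrete setting.

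Concretely, I would substitute the recursion \eqref{eq:recursion} into the expression for $D_{l+1}$. Using $x_{l+2} = a_{l+1} x_{l+1} - x_l$ and $y_{l+2} = a_{l+1} y_{l+1} - y_l$, I expand
\[
D_{l+1} = x_{l+1} y_{l+2} - x_{l+2} y_{l+1} = x_{l+1}(a_{l+1}y_{l+1} - y_l) - (a_{l+1}x_{l+1} - x_l) y_{l+1}.
\]
The two terms containing $a_{l+1} x_{l+1} y_{l+1}$ cancel, leaving $x_l y_{l+1} - x_{l+1} y_l = D_l$. Hence $D_{l+1} = D_l$, and by iteration $D_l$ is constant in $l$.

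There is essentially no obstacle here; the only thing to be careful about is bookkeeping of signs and the index at which the recursion is applied (one needs to use the recursion relation at index $l+1$, i.e. with $a_{l+1}$, to replace $x_{l+2}$ and $y_{l+2}$). The statement and proof are entirely formal, and the lemma will presumably be used later to extract a conserved quantity for the recursion \eqref{eq:recursion} arising from the triangle induction datum analysis in \S\ref{sec:combred}.
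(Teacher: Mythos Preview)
Your proof is correct and is exactly the computation the paper has in mind: the paper's proof is simply the sentence ``follows easily from \eqref{eq:recursion}'', and your substitution $x_{l+2}=a_{l+1}x_{l+1}-x_l$, $y_{l+2}=a_{l+1}y_{l+1}-y_l$ into $D_{l+1}$ is the intended one-line verification.
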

We define $(q_{s,t})_{s,t\in \ZZ}$ in such 
a way that  for all $s$, $(q_{s,t})_t$ is a solution to \eqref{eq:recursion} 
 with initial conditions $q_{s,s}=0$, $q_{s,s+1}=1$. Note that $q_{s,s-1}=-1$, $q_{s,s+2}=a_{s+1}$. 
The following equation shows that $(q_{s,t})_s$ is also a solution to \eqref{eq:recursion} and hence the indices $s,t$ of $q_{s,t}$ play symmetric roles:
\begin{equation}\label{p012}
q_{s+1,t}=a_{s}q_{s,t}-q_{s-1,t}.
\end{equation}
Indeed: both sides of \eqref{p012} are solutions to \eqref{eq:recursion} and they are equal for $t=s,s+1$ (using
 $q_{s-1,s+1}=a_{s}$).
For $(x_t)_t$ a solution to \eqref{eq:recursion} we get
\begin{equation}
\label{eq:recursion2}
x_t=q_{s,t}x_{s+1}-q_{s+1,t}x_s
\end{equation}
by a similar argument. Both sides of this equation are solutions to \eqref{eq:recursion} and moreover they agree for $t=s,s+1$. Applying
\eqref{eq:recursion2} with $(x_t)_t=(q_{s,t})_t$ (and the index $s$ replaced by $k$) we get
\begin{equation}
\label{eq:recursion3}
q_{s,t}=q_{s,k+1}q_{k,t}-q_{s,k}q_{k+1,t}.
\end{equation}
\subsubsection{Continued fractions}
Put $u_i:=v_i-v_{-1}$. After choosing a suitable basis for the plane spanned by $u_0$, $u_{r+1}$ and performing a translation we may assume $N=\ZZ^2$ and
\begin{equation}
\label{eq:us}
v_{-1}=(0,0), \quad v_0=(0,1), \quad v_{r+1}=(n,-q)
\end{equation}
where $n>0$, $0\le q<n$ and $\gcd(n,q)=1$ (see e.g. \cite[Proposition 10.1.1]{CoxLittleSchenck}). We will assume $n>q>0$, since otherwise $r=0$, and Proposition \ref{lem:sup:pretiltingIU} holds trivially.
Let
\[
\frac{n}{q}=[a_1,\dots,a_r]:=a_1-\dfrac{1}{a_2-\dfrac{1}{\cdots-\dfrac{1}{a_r}}}
\]
be the Hirzebruch-Jung continued fraction expansion of $n/q$ where $a_i\ge 2$. 
The $a_j$, $j\geq 1$, may be  computed by an Euclidean style algorithm starting from $i_0=n$, $i_1=q$, 
and inductively determining $a_j,i_{j+1}$ via division with remainder:
\begin{equation}\label{i}
i_{j-1}=a_{j}i_{j}-i_{j+1}, \quad 0<i_{j+1}<i_{j}, 
\end{equation}
until we obtain $i_{r}=1$. Then we set  $a_r=i_{r-1}$ and $i_{r+1}=0$. 

 The vectors $u_0,u_1,\dots,u_{r+1}$ 
 are related by the equations
\begin{equation}\label{012}
a_j u_j=u_{j-1}+u_{j+1},
\end{equation}
$1\leq j\leq r$, (see \cite[Theorem 10.2.8(b), Theorem 10.2.5]{CoxLittleSchenck}).

Below we extend $[a_1,\ldots,a_r]$  to a doubly infinite sequence $(a_i)_{i\in\ZZ}$ by putting $a_i=0$ for $i\not\in \{1,\ldots,r\}$.
We let $q_{s,t}$ be as in \S\ref{sec:recursion}.
\begin{remark}
\label{rem:recursion}
By \eqref{i}, \eqref{012} $x_l=i_{l}$, $x_l=u_{l}$ are solutions to \eqref{eq:recursion}. We silently extend them
to doubly infinite solutions  to avoid having to keep track of cumbersome restrictions on indices in some formulas.
\end{remark}

\begin{remark}
The $q_{s,t}$, for suitable $s,t$, appear frequently in combinatorics of Hirze\-bruch-Jung continued fractions. 
In \cite[Proposition 10.2.2]{CoxLittleSchenck}, $q_{0,t+1}$, $q_{1,t+1}$  are denoted by $P_t$, $Q_t$, respectively, for $0\leq t\leq r$. 
For example, they provide expressions for the truncated Hirzebruch-Jung continued fractions: $[b_{s},\dots,b_t]=q_{s-1,s+t}/q_{s,s+t}$ for $1\leq s\leq t\leq r$. The formula is proved in loc. cit.  for right truncations, i.e. for $s=1$, and it is not difficult to see that it holds also for general truncations.
\end{remark}

\subsubsection{Application}
\begin{lemma}
\label{lem:ordering}
Assume $0\leq s\leq s'\leq t'\leq t\leq r+1$ and $(s',t')\neq (s,t)$. Then 
\begin{equation}
\label{eq:order1}
0\le q_{s',t'}<q_{s,t}.
\end{equation}
\end{lemma}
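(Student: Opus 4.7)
The plan is to establish strict monotonicity of $q_{s,t}$ in each index separately on the range $[0,r+1]$ and then chain the two inequalities. Because $(q_{s,t})_t$ and $(q_{s,t})_s$ both satisfy the recursion \eqref{eq:recursion} (with respect to $t$ by definition, with respect to $s$ via \eqref{p012}) and the boundary values $q_{s,s}=0$, $q_{s-1,s}=q_{s,s+1}=1$ are symmetric in their roles, the two monotonicity statements are formally parallel, so I only need to set up the induction once.

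\emph{Monotonicity in $t$.} For fixed $s\in[0,r+1]$ I would show by induction on $t$ that $0\le q_{s,t}<q_{s,t+1}$ whenever $s\le t\le r$. The base $q_{s,s}=0<1=q_{s,s+1}$ is immediate. For $t\ge s+1$, the recursion gives
\[
q_{s,t+1}-q_{s,t}=(a_t-1)\,q_{s,t}-q_{s,t-1},
\]
and since $t\in[1,r]$ forces $a_t\ge 2$, the right-hand side is $\ge q_{s,t}-q_{s,t-1}>0$ by the inductive hypothesis; the positivity $q_{s,t+1}>0$ is picked up along the way. This yields the chain $q_{s,s}<q_{s,s+1}<\cdots<q_{s,r+1}$.

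\emph{Monotonicity in $s$.} The same induction applied to the recursion \eqref{p012} in the first argument, starting from $q_{t,t}=0$ and $q_{t-1,t}=1$ and proceeding downward in $s$, produces $q_{t,t}<q_{t-1,t}<\cdots<q_{0,t}$ for each fixed $t\in[0,r+1]$; the key estimate is again $a_s\ge 2$ for $s\in[1,r]$.

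\emph{Conclusion.} Given $0\le s\le s'\le t'\le t\le r+1$ with $(s',t')\ne(s,t)$, the two monotonicities combine to
\[
0\le q_{s',t'}\le q_{s',t}\le q_{s,t},
\]
and at least one of the right-hand inequalities is strict (depending on whether $t'<t$ or $s'>s$), giving \eqref{eq:order1}. The only genuine obstacle is the range restriction: outside $[0,r+1]$ the $a_j$'s vanish and monotonicity breaks (for instance $q_{0,r+2}=-q_{0,r}<0$), so it is essential that the hypothesis $0\le s\le t\le r+1$ keeps every index used in the induction step inside $[1,r]$ where $a_j\ge 2$. Once this book-keeping is in place, the argument is a short two-variable Hirzebruch--Jung induction.
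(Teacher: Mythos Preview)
Your proof is correct and is precisely what the paper has in mind: the paper's proof is the single sentence ``This follows easily by induction on $s,t$ using the fact that $a_i\ge 2$ for $i=1,\ldots,r$,'' and you have simply spelled out that induction (separately in each variable, then chained). There is no difference in approach.
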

\begin{proof} 
  This follows easily by induction on $s,t$  using the fact that $a_i\ge 2$ for $i=1,\ldots,r$.
\end{proof}
\begin{lemma}
We  have 
\begin{equation}
\label{0jm}
u_{j}=\frac{q_{j,l}}{q_{j-1,l}}\,u_{j-1}+\frac{1}{q_{j-1,l}}
\,u_l
\end{equation}
for $1\leq j\leq l\le r+1$.
\end{lemma}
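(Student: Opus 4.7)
The plan is to show that \eqref{0jm} is nothing more than a rearrangement of the general identity \eqref{eq:recursion2} applied to the sequence $(u_l)_l$.

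First I would observe that by \eqref{012} (together with Remark \ref{rem:recursion}, which extends everything to a doubly infinite sequence), $(u_l)_{l\in\ZZ}$ is itself a solution to the second-order recursion \eqref{eq:recursion}, $x_{l+1}=a_l x_l - x_{l-1}$. Therefore I may invoke formula \eqref{eq:recursion2} with $(x_t)_t = (u_t)_t$, and with the index $s$ there specialized to $s = j-1$. This yields
\begin{equation*}
u_t = q_{j-1,t}\, u_j - q_{j,t}\, u_{j-1}.
\end{equation*}
Setting $t = l$ gives $u_l = q_{j-1,l}\, u_j - q_{j,l}\, u_{j-1}$, which after isolating $u_j$ becomes exactly \eqref{0jm}, provided $q_{j-1,l} \neq 0$.

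The only thing left is the non-vanishing of $q_{j-1,l}$ for $1\le j\le l\le r+1$. When $j=l$ we have $q_{j-1,l}=q_{j-1,j}=1$ by definition, and the identity \eqref{0jm} reduces to the tautology $u_j=u_l$ (since also $q_{j,l}=q_{l,l}=0$). When $j<l$, Lemma \ref{lem:ordering} applied with $(s,s',t',t)=(j-1,j-1,j,l)$ gives $0\le q_{j-1,j}<q_{j-1,l}$, so $q_{j-1,l}\ge 2>0$. In either case we may divide by $q_{j-1,l}$ and conclude.

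There is no real obstacle here; the whole content is to recognize that $(u_l)_l$ solves the same recursion that governs the $q_{s,t}$, so that the universal identity \eqref{eq:recursion2} applies. The appeal to Lemma \ref{lem:ordering} is only a bookkeeping step to justify the division.
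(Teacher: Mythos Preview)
Your proof is correct and essentially the same as the paper's. The paper defines $C_l:=q_{j-1,l}u_j-q_{j,l}u_{j-1}$ and invokes Lemma~\ref{lem:specialized} to see that $C_l$ is independent of $j$, then specializes $j=l$ to get $C_l=u_l$; you instead cite the already-derived consequence \eqref{eq:recursion2} with $s=j-1$ and $t=l$, arriving at the identical identity $u_l=q_{j-1,l}u_j-q_{j,l}u_{j-1}$ and rearranging. Your explicit check that $q_{j-1,l}\neq 0$ via Lemma~\ref{lem:ordering} is a nice touch the paper leaves implicit.
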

\begin{proof}
By Remark \ref{rem:recursion} and Lemma \ref{lem:specialized}, $C_l:=q_{j-1,l}u_j-q_{j,l}u_{j-1}$ does not depend on $j$. Specializing to $j=l$ we see that $C_l=u_{l}$.
\end{proof}
Substituting $u_{j}=v_j-v_{-1}$ in \eqref{0jm} 
we obtain 
\begin{equation}
\label{eq:inductiondatum10}
v_j=\frac{q_{j-1,l}-q_{j,l}-1}{q_{j-1,l}}
\,v_{-1}+\frac{q_{j,l}}{q_{j-1,l}}\,
v_{j-1}+\frac{1}{q_{j-1,l}}
\,v_{l},
\end{equation}
which gives a concrete expression for the coefficients in  \eqref{eq:inductiondatum0} (and also for the induction datum in Proposition \ref{prop:main} below). 
Note that we have
\begin{equation}
\label{eq:i}
i_{t}=q_{t,r+1}
\end{equation}
as both sides satisfy \eqref{eq:recursion} by Remark \ref{rem:recursion} and are equal for $t=r,r+1$. 
Substituting this in \eqref{eq:inductiondatum}
we obtain
\begin{equation}
\label{eq:inductiondatum}
v_j=\frac{i_{j-1}-i_j-1}{i_{j-1}}\,v_{-1}+\frac{i_j}{i_{j-1}}\,v_{j-1}+\frac{1}{i_{j-1}}\,v_{r+1},
\end{equation}
and hence by \eqref{eq:defbj0}
\begin{equation}\label{eq:defbj}
b_{j}=\bigg\lf\frac{i_{j-1}-i_j-1}{i_{j-1}}\,b_{-1}+\frac{i_j}{i_{j-1}}\,b_{j-1}+\frac{1}{i_{j-1}}\,b_{r+1}\bigg\rf.
\end{equation}

\subsection{An expression for $\boldsymbol{b}$}
We give an explicit expression for the solutions to \eqref{eq:defbj} in case $(b_{-1},b_0,b_{r+1})=(0,0,d)$, $0\le d<n$
(see Lemma \ref{lem:pom2} below with $l=r+1$).  Since $i_1>\cdots>i_r=1$ 
successive division with remainder yields a unique representation
\begin{equation*}
\label{eq:ddef}
d=\sum_{t=1}^r i_td_t
\end{equation*}
 with $d_t\in \NN$ and
\begin{equation}
\label{eq:dprop}
 \sum^r_{t=k+1}i_td_t<i_{k}\text{ for any }0\le k\leq r 
\end{equation}
(for $k=0$ this formula becomes $d<n$ which is true by hypothesis).

For $0\leq j\leq r+1$  put
\begin{equation}\label{eq:bp} 
c_{j}=\sum_{t=1}^{j-1} q_{t,j}d_t.
\end{equation}
Using \eqref{eq:i} and interpreting an empty sum as zero we have in particular
\begin{equation}
\label{eq:boundary}
c_0=0, \quad c_{r+1}=d.
\end{equation}
There is bound for the tail
sums in \eqref{eq:bp} similar to \eqref{eq:dprop}.
\begin{lemma} We have for $0\le k\le j-1\le r$
\begin{equation}\label{des}
\sum_{t=k+1}^{j-1}q_{t,j}d_t
<q_{k,j}.
\end{equation}
\end{lemma}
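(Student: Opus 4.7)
The plan is to prove \eqref{des} by induction on $j$ after deriving a convenient two-term recursion for the slack quantity
\[
\delta_j := q_{k,j} - \sum_{t=k+1}^{j-1} q_{t,j}\, d_t,
\]
so that the target becomes $\delta_j > 0$ for $k+1 \le j \le r+1$.

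First I would establish the recursion
\[
i_{j-1}\, \delta_j \;=\; \Bigl(i_k - \sum_{t=k+1}^{j-1} i_t d_t\Bigr) + i_j\, \delta_{j-1},
\]
which comes from \eqref{eq:recursion3}. Specializing there $s\mapsto t$, $t\mapsto r+1$, and the interior index to $j-1$, and recalling $i_t = q_{t,r+1}$ from \eqref{eq:i}, one obtains
\[
i_t \;=\; q_{t,j}\, i_{j-1} - q_{t,j-1}\, i_j
\]
for every $t$. The case $t=k$ expresses $q_{k,j}\, i_{j-1}$ in terms of $i_k$ and $q_{k,j-1}\, i_j$; multiplying the displayed identity by $d_t$, summing over $k+1\le t\le j-1$, and using $q_{j-1,j-1}=0$ lets one rewrite the resulting $q_{\bullet,j-1}$ sum as $q_{k,j-1}-\delta_{j-1}$. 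Subtracting the two relations yields the boxed recursion.

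With the recursion in hand the induction is short. The base case $j=k+1$ is immediate since the sum defining $\delta_{k+1}$ is empty, so $\delta_{k+1}=q_{k,k+1}=1$. For the inductive step, the parenthesized term on the right is strictly positive by hypothesis \eqref{eq:dprop} combined with $d_t\ge 0$; the term $i_j\, \delta_{j-1}$ is non-negative since $i_j\ge 0$ (with $i_{r+1}=0$) and $\delta_{j-1}\ge 0$ by IH; and the coefficient $i_{j-1}>0$ because $j-1\le r$. Dividing through gives $\delta_j>0$.

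The main obstacle is spotting the right identity linking $q_{k,j}$, the partial sums on the left-hand side of \eqref{des}, and the tail sums $\sum i_t d_t$ controlled by \eqref{eq:dprop}; once it is on the table the argument is mechanical. As a sanity check, the endpoint $j=r+1$ collapses (using $i_{r+1}=0$ and $i_r=1$) to $\delta_{r+1}=i_k-\sum_{t=k+1}^r i_t d_t$, so \eqref{des} for $j=r+1$ is exactly \eqref{eq:dprop}, and the induction interpolates between this boundary case and the trivial case $j=k+1$.
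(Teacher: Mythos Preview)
Your proof is correct and takes a genuinely different route from the paper's. The paper invokes \cite[Lemma 3.5]{IshiiUeda} to obtain the constraints $0\le d_t\le a_t-1$ together with a gap condition forbidding two maximal values $d_s=a_s-1$, $d_t=a_t-1$ without a compensating low value $d_l\le a_l-3$ in between; it then argues that, since $(q_{t,j})_t$ is decreasing in $t$ by \eqref{eq:order1}, the left-hand side of \eqref{des} is bounded by its value at the extremal pattern $d_{k+1}=a_{k+1}-1$, $d_t=a_t-2$ for $k+1<t<j$, and computes this bound telescopically via \eqref{p012} to $q_{k,j}-1$. Your argument bypasses all of this: you derive the recursion $i_{j-1}\delta_j=\bigl(i_k-\sum_{t=k+1}^{j-1}i_td_t\bigr)+i_j\delta_{j-1}$ from \eqref{eq:recursion3} and \eqref{eq:i}, and induct on $j$ using only the tail bound \eqref{eq:dprop} and $d_t\ge 0$. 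This is more self-contained --- it needs neither the external lemma on the fine structure of the $d_t$ nor the monotonicity of $q_{t,j}$ --- and the positivity drops out directly from \eqref{eq:dprop} rather than through a separate maximization step. The paper's approach has the minor advantage of exhibiting an explicit extremal pattern, but for the purposes of this lemma your induction is cleaner and stays entirely within the paper's own setup.
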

\begin{proof}
For $k=j-1$ there is nothing to prove so we assume $k<j-1$. According to  \cite[Lemma 3.5]{IshiiUeda} $d_t$ has the following properties
\begin{enumerate}
\item[(i)] $0\le d_t\le a_t-1$.
\item[(ii)] If $d_s=a_s-1$, $d_t=a_t-1$ for $s<t$ then there is $s<l<t$ such that $d_l=a_l-3$.
\end{enumerate}

By \eqref{eq:order1} the sequence $(q_{t,j})_t$ is descending. Using this it is easy to see that  among the sequences $(d_t)_t$ which satisfy (i), (ii) the one that gives the
maximum value for $\sum_{t=k+1}^{j-1}q_{t,j}d_t$ is given by $d_{k+1}=a_{k+1}-1$, $d_{k+2}=a_{k+2}-2$,\dots,$d_{j-1}=a_{j-1}-2$.
Hence the left-hand side of \eqref{des} is bounded by
\[
q_{k+1,j}+\sum_{t=k+1}^{j-1}q_{t,j}(a_t-2).
\]
We compute
\begin{align*}
q_{k+1,j}+\sum_{t=k+1}^{j-1}
q_{t,j}(a_t-2)
&=q_{k+1,j}+\sum_{t=k+1}^{j-1}\left(q_{t-1,j}-q_{t,j}+q_{t+1,j}-q_{t,j}\right)&&\eqref{p012}\\\nonumber
                          &=q_{k+1,j}+q_{k,j}-q_{j-1,j}-q_{k+1,j}+q_{j,j}\\\nonumber
&=q_{k,j}-1
\end{align*}
which implies \eqref{des}.\end{proof}
\begin{lemma} \label{lem:pom2} We have for $1\leq j\le l\leq r+1$
\begin{align}\label{eq:pom2}
c_{j}
&=\left\lfloor 
\frac{q_{j,l}}{q_{j-1,l}}c_{j-1}+\frac{1}{q_{j-1,l}}c_l\right \rfloor.
\end{align}
\end{lemma}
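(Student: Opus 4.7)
The plan is to reduce the floor identity to an equivalent sandwich inequality for the integer quantity $q_{j-1,l}c_j$, and then compute the difference $q_{j,l}c_{j-1}+c_l-q_{j-1,l}c_j$ explicitly from the definition \eqref{eq:bp} using the three-term identity \eqref{eq:recursion3}. Concretely, \eqref{eq:pom2} is equivalent to
\[
0\;\le\; q_{j,l}c_{j-1}+c_l-q_{j-1,l}c_j\;<\;q_{j-1,l},
\]
so it suffices to identify this difference with a non-negative integer that is strictly bounded by $q_{j-1,l}$.

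First I would split off the top term in $c_j=\sum_{t=1}^{j-1}q_{t,j}d_t$, using $q_{j-1,j}=1$, to write $c_j=d_{j-1}+\sum_{t=1}^{j-2}q_{t,j}d_t$. Then, specializing \eqref{eq:recursion3} with $s=t$, $k=j-1$ and $t$ there replaced by $l$, one obtains the key cancellation
\[
q_{j,l}q_{t,j-1}-q_{j-1,l}q_{t,j}=-q_{t,l}\quad\text{for all } t.
\]
Substituting this into $q_{j,l}c_{j-1}-q_{j-1,l}c_j$ collapses the double sum to $-\sum_{t=1}^{j-2}q_{t,l}d_t-q_{j-1,l}d_{j-1}$, and after adding $c_l=\sum_{t=1}^{l-1}q_{t,l}d_t$ the terms for $t\le j-1$ cancel (using $q_{j-1,l}d_{j-1}$ to absorb the $t=j-1$ contribution in $c_l$), leaving the clean expression
\[
q_{j,l}c_{j-1}+c_l-q_{j-1,l}c_j\;=\;\sum_{t=j}^{l-1}q_{t,l}d_t.
\]

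Once this is in hand, the two required inequalities are immediate. Non-negativity is clear since $d_t\ge 0$ and $q_{t,l}\ge 0$ by Lemma \ref{lem:ordering}. The strict upper bound
\[
\sum_{t=j}^{l-1}q_{t,l}d_t\;<\;q_{j-1,l}
\]
is exactly \eqref{des} after the substitution $(k,j)\mapsto(j-1,l)$ (which is legal since $0\le j-1\le l-1\le r$). This gives $c_j\le \frac{q_{j,l}}{q_{j-1,l}}c_{j-1}+\frac{1}{q_{j-1,l}}c_l<c_j+1$, establishing \eqref{eq:pom2}.

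The main obstacle is simply bookkeeping: one must be careful to use the correct specialization of \eqref{eq:recursion3} so that the cross-terms collapse to $-q_{t,l}$, and one must keep track of the boundary term $t=j-1$ when splitting $c_j$ and $c_l$. Once those are handled correctly, the identity reduces verbatim to \eqref{des}, which is the deep input (depending on the arithmetic properties (i)–(ii) of the digits $d_t$ from \cite[Lemma 3.5]{IshiiUeda}) and has already been proved.
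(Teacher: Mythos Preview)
Your proof is correct and takes essentially the same approach as the paper. The paper computes directly that $\frac{q_{j,l}}{q_{j-1,l}}c_{j-1}+\frac{1}{q_{j-1,l}}c_l=c_j+\frac{1}{q_{j-1,l}}\sum_{t=j}^{l-1}q_{t,l}d_t$ using the same specialization of \eqref{eq:recursion3} and then applies \eqref{des}; you simply clear the denominator $q_{j-1,l}$ first and phrase the same computation as the sandwich inequality $0\le q_{j,l}c_{j-1}+c_l-q_{j-1,l}c_j=\sum_{t=j}^{l-1}q_{t,l}d_t<q_{j-1,l}$, which is an equivalent reorganization of the identical argument.
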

\begin{proof}
We compute the right-hand side of \eqref{eq:pom2}. 
We have
\begin{align*}
\frac{q_{j,l}}{q_{j-1,l}}c_{j-1}+\frac{1}{q_{j-1,l}}c_l
&=\frac{q_{j,l}}{q_{j-1,l}}\sum_{t=1}^{j-2}q_{t,j-1}d_t+
  \frac{1}{q_{j-1,l}}\sum_{t=1}^{l-1}q_{t,l}d_t&\eqref{eq:bp}\\
&=\sum_{t=1}^{j-2}\frac{q_{j,l}q_{t,j-1}+q_{t,l}}{q_{j-1,l}}d_t+d_{j-1}+\frac{1}{q_{j-1,l}}\sum_{t=j}^{l-1} q_{t,l}d_t\\
&=\sum_{t=1}^{j-1}q_{t,j}d_t+\frac{1}{q_{j-1,l}}\sum_{t=j}^{l-1} q_{t,l}d_t&\eqref{eq:recursion3}.
\end{align*}
Using \eqref{eq:bp}, \eqref{des} we see that the right-hand side of \eqref{eq:pom2}  equals $c_{j}$. 
\end{proof}

\subsection{Proof of Proposition \ref{lem:sup:pretiltingIU}}
\label{sec:combred}
Assume $b=(b_{-1},b_0,b_{r+1},b_1,\dots,b_r)$ is obtained by triangle convex induction from $(b_{-1},b_0,b_{r+1})$. Then it is trivially true
that for $0\le l\le r$, $(b_{-1},b_{l},b_{r+1},b_{l+1},\allowbreak \ldots,b_{r})$ is obtained by triangle convex
induction from $(b_{-1},b_{l},b_{r+1})$.

The following result shows that this is also true when we ``truncate'' $b$  on the right. 
This is a combinatorial fact for which we have no conceptual explanation.

\begin{proposition} \label{prop:main}
For $1\le l\le r+1$,
$(b_{-1},b_0,b_l,b_1,\ldots,b_{l-1})$ is obtained by triangle convex induction from
$(b_{-1},b_0,b_{l})$. 
\end{proposition}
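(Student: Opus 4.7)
The plan is to reduce Proposition \ref{prop:main} to the explicit computation already carried out in Lemma \ref{lem:pom2}, via two affine symmetries of the triangle convex induction recursion that apply equally well to the original and to the truncated version.

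First I would record two elementary symmetries of the recursion \eqref{eq:defbj0}, applied uniformly to all of $(b_i)_{i\in\{-1,0,\ldots,r+1\}}$:
\emph{(i)} the subtraction $b_i\mapsto b_i-b_{-1}$, which uses $p_j+q_j+r_j=1$ to absorb an integer out of the floor; and
\emph{(ii)} the shift $b_i\mapsto b_i+\la m,v_i\ra$ for any $m\in M$, which uses the convex combination identity $v_j=p_jv_{-1}+q_jv_{j-1}+r_jv_{r+1}$ together with $\la m,v_j\ra\in\ZZ$ to extract an integer from the floor. Thanks to the analogous convex combination \eqref{eq:inductiondatum10} expressing $v_j$ in terms of $(v_{-1},v_{j-1},v_l)$, each of these operations equally preserves the truncated triangle convex induction based at $(v_{-1},v_0,v_l)$, so it suffices to prove the proposition after normalizing.

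Working in the coordinates \eqref{eq:us} in which $v_{-1}=(0,0)$, $v_0=(0,1)$, $v_{r+1}=(n,-q)$, operation (i) sends $b_{-1}$ to $0$. Choosing then $m=(m_1,-b_0)\in\ZZ^2$ in (ii) sets $b_0$ to $0$, keeps $b_{-1}=0$ (since $v_{-1}=0$), and shifts $b_{r+1}$ by $nm_1+qb_0$. Varying $m_1\in\ZZ$ reduces $b_{r+1}$ modulo $n$ into $[0,n)$, so it suffices to treat the normalized case $(b_{-1},b_0,b_{r+1})=(0,0,d)$ with $0\le d<n$.

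In this normalized setting the recursion \eqref{eq:defbj} simplifies to $b_j=\lf(i_j/i_{j-1})b_{j-1}+d/i_{j-1}\rf$ with $b_0=0$. Writing $d=\sum_{t=1}^r i_td_t$ as in the discussion preceding Lemma \ref{lem:pom2}, that lemma (applied with $l=r+1$ and combined with the identity $q_{t,r+1}=i_t$ from \eqref{eq:i}) shows that the sequence $c_j=\sum_{t=1}^{j-1}q_{t,j}d_t$ satisfies exactly this recursion with $c_0=0$; by uniqueness of solutions, $b_j=c_j$ for all $j$. A second application of Lemma \ref{lem:pom2}, now for arbitrary $1\le l\le r+1$ and $1\le j\le l-1$, yields
$$
c_j=\left\lfloor\frac{q_{j,l}}{q_{j-1,l}}\,c_{j-1}+\frac{1}{q_{j-1,l}}\,c_l\right\rfloor,
$$
which by \eqref{eq:inductiondatum10} and $b_{-1}=0$ is precisely the triangle convex induction recursion for $(v_{-1},v_0,v_l)$ starting from $(0,0,c_l)=(0,0,b_l)$. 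Reversing the two normalizations completes the proof. The nontrivial combinatorial core of the argument is really Lemma \ref{lem:pom2}, which is exactly the identity that forces the original and the truncated recursions to share the same explicit sequence $c_j$; granting that lemma, Proposition \ref{prop:main} is essentially a bookkeeping exercise on top of it.
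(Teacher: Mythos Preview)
Your proposal is correct and follows essentially the same approach as the paper: normalize via the affine symmetry $b_i\mapsto b_i+\langle m,v_i\rangle+k$ to reduce to $(b_{-1},b_0,b_{r+1})=(0,0,d)$ with $0\le d<n$, identify $b_j=c_j$ via Lemma~\ref{lem:pom2} at $l=r+1$, and then invoke Lemma~\ref{lem:pom2} for general $l$. Your write-up is in fact more explicit than the paper's in separating the constant and linear parts of the affine shift and in checking that the same symmetries preserve the truncated recursion \eqref{eq:inductiondatum10}, which is precisely what justifies ``reversing the normalizations.''
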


\begin{proof}
In view of \eqref{eq:inductiondatum10} we have to prove for $1\leq j<l\leq r+1$
\begin{align}\label{eq:pom1}
b_{j}
&=\left\lfloor \frac{q_{j-1,l}-q_{j,l}-1}{q_{j-1,l}}\,b_{-1}+
\frac{q_{j,l}}{q_{j-1,l}}b_{j-1}+\frac{1}{q_{j-1,l}}b_l\right \rfloor.
\end{align}
To prove \eqref{eq:pom1} we may add to $b$ an affine function of the form $\langle m,-\rangle+k$ for $m\in M$ and $k\in\ZZ$
as such an altered $b$ still satisfies the recursion  \eqref{eq:defbj}. Using \eqref{eq:us} it is therefore easy to see that we may assume $(b_{-1},b_0,b_{r+1})=(0,0,d)$ for $0\le d<n$.

Comparing \eqref{eq:defbj} (and \eqref{eq:i}) with \eqref{eq:pom2} for $l=r+1$ together with \eqref{eq:boundary} we find that the normalized $b$ satisfy $b_j=c_j$. Then \eqref{eq:pom1} follows by invoking
 \eqref{eq:pom2} for arbitrary~$l$.
\end{proof}
\begin{remark} 
\label{rem:sign} It is clear that in the proof of Proposition \ref{prop:main}  we made essential use of the fact that in \eqref{eq:defbj} we used
$\lf-\rf$, rather than $\lc-\rc$. 
\end{remark}

\begin{proof}[Proof of Proposition \ref{lem:sup:pretiltingIU}]
It is enough to prove that $s_{v_{-1}}^c(m)=s_{v_{j-1}}^c(m)=s_{v_l}^c(m)\allowbreak \neq s_{v_{j}}^c(m)$, $1\leq j<l$, cannot occur. 
This follows from 
 Lemmas \ref{lem:sign}, \ref{diff} (using Proposition \ref{prop:main}).
\end{proof}

\section{A tilting bundle on a stacky resolution using Gulotta's approach}\label{Gulotta}
From now on we assume that $n=3$.  We fix a triangulation of $P$ without extra vertices and we let $\Sigma$ be the corresponding fan. In this section
we will construct a tilting bundle on $X_{\bold{\Sigma}}$. This yields an NCCR for $R_P$ by Proposition \ref{sec:standard}.

To start we embed $P$  in a (minimal) rectangle 
$P_0=[0,\bc]\times [0,\bd]\times\{1\}$.  
Then $P_0-\text{int}P$ is the union of at most four (non-convex) polygons $\{Q_i\}$, one polygon for every corner vertex in $P_0$ not occurring in $P$. Each of these polygons can be divided  into 
triangles by successively cutting off ``corner triangles'' of maximal size whose internal edge has slope determined by the Farey tree, traversed row by row (see \cite[Figure 15]{Gulotta}). E.g.\ if $Q\in \{Q_i\}$ is the polygon in the 
upper left or lower right corner then we use the slope sequence\footnote{The slopes $0/1$ and $1/0$ which are sometimes considered part of  the Farey tree are missing as they correspond to the vertical and horizontal boundary of $Q$.}
\[
\xymatrix@=1ex{
&&&&&&&&\frac{1}{1}\ar@{-}[dllll]\ar@{-}[drrrr]&&&&&&&&\\
&&&&\frac{1}{2}\ar@{-}[dll]\ar@{-}[drr]&&&&&&&&\frac{2}{1}\ar@{-}[dll]\ar@{-}[drr]&&&&\\
&&\frac{1}{3}\ar@{-}[dl]\ar@{-}[dr]&& &&\frac{2}{3}\ar@{-}[dl]\ar@{-}[dr]&& &&\frac{3}{2}\ar@{-}[dl]\ar@{-}[dr]&& &&\frac{3}{1}\ar@{-}[dl]\ar@{-}[dr]&&\\
&\frac{1}{4}& &\frac{2}{5}& &\frac{3}{5}& &\frac{3}{4}& &\frac{4}{3}& &\frac{5}{3}& &\frac{5}{2}& &\frac{4}{1}&\\
&&&&&&&&\cdots&&&&&&&&\\
}
\]
If $Q\in \{Q_i\}$ is the polygon in the upper right or lower left corner then we use the corresponding negative slopes. See Figure \ref{fig:gulotta} for an example.

\begin{figure}[H]
\begin{tikzpicture}[scale=1.00]
\path[draw,color=lightgray] (0.00,0.00) grid (4.00,3.00);
\path[draw,line width=0.3mm] (0.00,0.00)-- (4.00,0.00)-- (4.00,1.00)-- (3.00,2.00)-- (1.00,3.00)-- (0.00,0.00);
\path[draw,color=ruta1,] (2.00,3.00) -- (3.00,2.00);
\path[draw,color=ruta1,] (1.00,3.00) -- (0.00,2.00);
\path[draw,color=ruta1,] (1.00,3.00) -- (0.00,1.00);
\path[draw,color=siva,] (4.00,1.00) -- (0.00,0.00);
\path[draw,color=siva,] (3.00,2.00) -- (0.00,0.00);
\path[draw,color=red,] (4.00,3.00) -- (3.00,2.00);
\node[] at (3.50,3.30) {$P_0$};
\node[] at (3.70,1.70) {$P$};
\tkzDefPoint(0.0,0.0){a}
\tkzDefPoint(0.0,1.0){b}
\tkzDefPoint(1.0,3.0){c}
\tkzDefPoint(0.0,2.0){d}
\tkzDefPoint(0.0,3.0){e}
\tkzDefPoint(2.0,3.0){f}
\tkzDefPoint(4.0,3.0){g}
\tkzDefPoint(3.0,2.0){h}
\tkzDefPoint(4.0,1.0){i}
\tkzDefPoint(4.0,0.0){j}
\tkzFillPolygon[fill=ruta,opacity=0.5](a,b,c)
\tkzFillPolygon[fill=ruta,opacity=0.5](b,c,d)
\tkzFillPolygon[fill=ruta,opacity=0.5](d,c,e)
\tkzFillPolygon[fill=ruta,opacity=0.5](c,f,h)
\tkzFillPolygon[fill=ruta,opacity=0.6](f,g,h)
\tkzFillPolygon[fill=ruta,opacity=0.6](h,i,g)
\tkzFillPolygon[fill=siva,opacity=0.5](a,i,j)
\tkzFillPolygon[fill=siva,opacity=0.5](a,i,h)
\tkzFillPolygon[fill=siva,opacity=0.5](a,h,c)
\end{tikzpicture}

\caption{\textcolor{ruta}{$\blacktriangle$} corner triangles, \textcolor{red}{\line(1,1){10}}extra diagonal} 
\label{fig:gulotta}
\end{figure}

Let $P_i$ be the polygon that we obtain after removing $i$ triangles from $P_0$
 and let $P_l=P$. As explained above $P_i$ is obtained by ``sawing off'' a corner from $P_{i-1}$. The thus created edge in $P_i$ will be called the \emph{cutting edge}.  Remarkably, because
of the clever slope choices during the cutting, all $P_i$ are lattice polygons (cf. proof of
\cite[Theorem 6.1]{Gulotta}).

 We add extra diagonals to the triangle $P_{i-1}-P_i$ connecting the unique vertex of $P_{i-1}$ not in $P_i$  to the points on the cutting edge in $P_i$ which are vertices of $P_j$ for $j\ge i$.%
\footnote{The extra diagonals are necessary to obtain a triangulation of $P_0$ in the sense of algebraic topology, i.e. a homeomorphism $|K|\cong P_0$ where $K$ is a combinatorial simplicial
complex.}
Combining the resulting triangulation of $P_i-P$ with the one of $P$ we obtain compatible triangulations of $P_0\supset P_1\supset\cdots\supset P_l=P$
where $P_{i-1}-P_{i}$ is a subdivided triangle. Let $V_0\supset V_1\supset\cdots\supset V_l$ be the corresponding sets of vertices. Note that $|V_{i-1}-V_i|=1$ and moreover $V_i$
is the union of the vertices of $P_j$ for $j\ge i$.

Let $n_1=(0,0,1)$, $n_2=(\bc,0,1)$, $n_3=(\bc,\bd,1)$, $n_4=(0,\bd,1)$ be the vertices of~$P_0$
and
let $V'_i$ be the union of the vertices of $P_j$ for $0\le j\le i$.
For each $i$,  $V'_i-V'_{i-1}$ consists of endpoints of the cutting edge
in $P_i$. More precisely $V'_i-V'_{i-1}$ consists of $0$, $1$ or $2$ elements of $\partial P_{i-1}$ (depending on how many of the endpoints of the cutting edge are already vertices of $P_{i-1}$). If $v$ is one of those elements then 
\begin{equation}
\label{eq:conv2}
v=(1-t)v'+tv''
\end{equation}
for $t\in ]0,1[$ and $v',v''$ vertices of $P_{i-1}$. We
use this to order $V_0=V'_l=[n_1{},\ldots,n{}_{4+l'}]$ in
such a way that it is convexly induced from $[n_1,n_2,n_3,n_4]$ in the
sense of Definition \ref{def:convex1}, with the induction datum $\mathfrak{t}=(t^{(i)}_j)_{i,j}$
being obtained from the expressions \eqref{eq:conv2}.

We now fix $b_0\in \ZZ^4$. 
We also fix a sign sequence and we let $(b_v)_{v\in V_0}:=\tilde{b}{}$ be \convind from $b_0$ with respect to this sign sequence (and the induction datum $\mathfrak{t}$). 
In down to earth terms this means that whenever $v,v',v''$ are as in \eqref{eq:conv2}
then $b_{v}=\lfloor (1-t)b_{v'}+tb_{v''}\rfloor$ or $b_{v}=\lceil (1-t)b_{v'}+tb_{v''}\rceil$
depending on the chosen sign sequence.

Let $\Sigma_i$ be the fan on the cone over $P_i$ corresponding to its triangulation. The elements of $V_i$ are the minimal generators of the one-dimensional cones in $\Sigma_i$ and we 
 let ${\bf \Sigma_i}=(\Sigma_i,V_i)$ be the corresponding stacky fan as in \S\ref{sec:prel}.
Let $\tilde{b}_i=(b_v)_{v\in V_i}$.
\begin{lemma}\label{lem:pretilting}
We have
\begin{equation}
\label{eq:gulotta}
\Gamma(X_{\bold{\Sigma}_0},\Mscr_{\bold{\Sigma}_0,\tilde{b}})=\M_{b_0}.
\end{equation}
If $b_0\in \ZZ^4$ moreover satisfies
\begin{equation}{\cm}\label{\cm}
\forall m\in M:[s_1^{b_0}(m),s_2^{b_0}(m),s_3^{b_0}(m),s_4^{b_0}(m)]\not\in \{[+-+-],[-+-+]\}
\end{equation}
(cfr. \eqref{eq:signdef} for notation) then $H^j(X_{\bold{\Sigma}_i},\Mscr_{\bold{\Sigma}_i,\tilde{b}_i})=0$ for $j>0$
and $i=0,\ldots,l$.
\end{lemma}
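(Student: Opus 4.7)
\emph{Identity \eqref{eq:gulotta}.} This is a direct combination of results already in place. By the proof of Lemma \ref{lem:relations} we have $\pi_{s,\ast}\Mscr_{\bold{\Sigma}_0,\tilde b}=\Mscr_{\Sigma_0,\tilde b}$, so $\Gamma(X_{\bold{\Sigma}_0},\Mscr_{\bold{\Sigma}_0,\tilde b})=\Gamma(X_{\Sigma_0},\Mscr_{\Sigma_0,\tilde b})=\mathsf{M}_{\tilde b}$ by \eqref{def:module}. Since $\tilde b$ is convexly induced from $b_0$ with first four coordinates equal to $b_0$, Corollary \ref{convind} applied to $I=\{1,2,3,4\}$ gives $\mathsf{M}_{\tilde b}=\mathsf{M}_{b_0}$.

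\emph{Cohomology vanishing.} The plan is to use the stacky analogue of \eqref{eq:cohomMb} to decompose $H^j(X_{\bold{\Sigma}_i},\Mscr_{\bold{\Sigma}_i,\tilde b_i})$ as a direct sum of terms $\tilde H^{j-1}(V_{D_{\tilde b_i},m},k)$ indexed by $m\in M$. Since $V_{D_{\tilde b_i},m}$ is a simplicial subcomplex of the triangulated topological disk $P_i$, the required vanishing is equivalent to the following combinatorial claim: for every $m\in M$ and every $i=0,\ldots,l$, the set $V_{D_{\tilde b_i},m}$ is empty or contractible. Fix $m$ and set $F_v=\langle m,v\rangle+b_v$, so that $s^{\tilde b}_v(m)=\operatorname{sign}F_v$. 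By Lemma \ref{lem:sign}, the sign at each convexly-induced vertex matches the sign at one of its two parents, so the entire sign pattern on $V_0$ is determined by the four corner signs of $P_0$ via propagation through the Gulotta construction. Hypothesis \eqref{cm} forces these four signs to partition $\partial P_0$ into at most two contiguous monochromatic arcs.

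The natural strategy is induction on $l-i$, beginning with the base case $i=l$ corresponding to the triangulation of $P=P_l$ itself, and then gluing in the subdivided corner triangle $P_{i-1}-P_i$ at each step; the intersection with the previously built $V_{D_{\tilde b_i},m}$ happens along the cutting edge, which is in the situation of Lemma \ref{lem:intervalconvexinduction} and therefore carries a monotone $+\cdots+-\cdots-$ sign pattern. When the cut-off corner $v_\ast$ has sign $+$ nothing new is added; when $v_\ast$ has sign $-$ the added $-$-simplices form a star from $v_\ast$ over the $-$-arc of the cutting edge, and gluing this star to the previous $V_{D_{\tilde b_i},m}$ along a (necessarily contractible) arc preserves the empty-or-contractible property by Mayer--Vietoris.

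The main obstacle will be the case analysis at the gluing step, in particular excluding the ``dangerous'' configurations where the cut-off corner $v_\ast$ is forced to be $-$ while its neighbours on the cutting edge are all $+$, which would produce an isolated $-$-vertex and break contractibility. This is where the non-alternation hypothesis \eqref{cm} on $b_0$ will enter decisively: by tracing the sign propagation rule of Lemma \ref{lem:sign} back through the chain of convex combinations that produces $v_\ast$ from the four initial corners, one must show that \eqref{cm} forbids precisely these bad patterns, so that every $-$-vertex is at each stage connected via $-$-edges to the already constructed $-$-region. Combined with the Farey-tree slope choice (which guarantees that each $P_i$ is a lattice polygon and that the intermediate triangulations are well-behaved), this will complete the induction and the proof.
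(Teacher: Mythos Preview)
Your approach is essentially the paper's: reduce to showing $V_{D_{\tilde b_i},m}$ is empty or contractible, induct on $i$ from $i=l$ down to $i=0$, and glue the subdivided corner triangle along the cutting edge using the interval sign pattern of Lemma~\ref{lem:intervalconvexinduction}. The identity \eqref{eq:gulotta} is handled correctly.

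There is, however, a genuine gap in your treatment of the ``dangerous'' case. You assert that hypothesis \eqref{\cm} \emph{forbids} the configuration where the cut-off corner $v_\ast$ carries sign $-$ while every vertex on the cutting edge carries sign $+$. This is false: that configuration can and does occur (take e.g.\ the four corner signs to be $+,+,-,+$ and cut off the $-$ corner, choosing the sign sequence so that both cutting-edge endpoints inherit $+$). What \eqref{\cm} actually buys is the cyclic $+\cdots+-\cdots-$ pattern on $\partial P_i\cap V_i$ for \emph{every} $i$ (the paper establishes this separately as an intermediate step). Once one has that, the dangerous case forces the entire boundary of $P_i$ to be $+$, and then repeated application of Lemma~\ref{lem:sign} shows $s^{\tilde b}_v(m)=+$ for \emph{all} $v\in V_i$, so $V_{D_{\tilde b_i},m}=\emptyset$. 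Hence $V_{D_{\tilde b_{i-1}},m}=\{v_\ast\}$ is a single point --- contractible, not disconnected. The correct resolution is thus not to exclude the dangerous case but to show that whenever it arises the previous $V$ is empty.

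A related gap: your base case $i=l$ is not actually established. You need the boundary sign pattern on $\partial P$ to conclude contractibility there, and that pattern is inherited from $\partial P_0$ only via the same inductive boundary argument running in the \emph{opposite} direction (from $P_0$ down to $P_l$). So you must first prove the $+\cdots+-\cdots-$ pattern on every $\partial P_i$ before starting your main induction.
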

\begin{proof}
We  start with the first claim. By Lemma \ref{lem:relations} we have
\[
\Gamma(X_{\bold{\Sigma}_0},\Mscr_{\bold{\Sigma}_0,\tilde{b}})=\M_{\tilde{b}}.
\]
Since $\tilde{b}$ is convexly induced from $b_0$, the conclusion follows from Corollary \ref{convind}.

Now we discuss the last claim. As $\pi_s:X_{{\bf \Sigma}_i}\to X_{\Sigma_i}$  
 is finite we have by Lemma \ref{lem:relations}
\[
H^j(X_{\bold{\Sigma}_i},\Mscr_{\bold{\Sigma}_i,\tilde{b}_i})=H^j(X_{\Sigma_i},\Mscr_{\Sigma_i,\tilde{b}_i})=H^j(X_{\Sigma_i},\Oscr(D_{\tilde{b}_i})).{}
\]

 We will prove that $V_{D_{\tilde{b}_i},m}$ (see \eqref{eq:cohomMb}, \eqref{vdbm}) is either empty or contractible.\footnote{Note that the distinction is relevant as contractible spaces have no reduced cohomology but $\tilde{H}^{-1}(\emptyset,k)=k$.} This implies what we want. 
\begin{step} \label{step1}
Let $u_0,u_1,\ldots, u_{r+1}\in V_i$ be the vertices on an edge in $P_i$
(ordered in an arbitrary direction). We claim that the signs $s_{u_i}^b(m)$ follow the 
pattern $+\cdots+-\cdots-$ (possibly reflected and perhaps with no $+$ or $-$ present). 
This follows from the easy observation that $(\tilde{b}_{u_0},\tilde{b}_{u_{r+1}},\tilde{b}_{u_1},\ldots,\tilde{b}_{u_r})$ is obtained
by interval convex induction from $(\tilde{b}_{u_0},\tilde{b}_{u_{r+1}})$ together with Lemma \ref{lem:intervalconvexinduction}.
\end{step}
 \begin{step} \label{step2} Let $w_1,w_2,\ldots$ be the elements of $\partial P_i\cap V_i$ (ordered in 
   an arbitrary direction). We claim that the signs $s_{w_i}^b(m)$ follow the 
pattern $+\cdots+-\cdots-$ (up to cyclic permutation and possibly with no $+$ or $-$ present).

By Step \ref{step1} it is sufficient to prove this for $w_1,w_2,\ldots$ being the vertices of $P_i$.
We note that it is true for $P_0$ by \eqref{\cm}.
An easy case by case analysis using  Lemma \ref{lem:sign} 
for removing a single triangle 
 then proves the claim by induction.
\end{step}
\begin{step} $V_{D_{\tilde{b}_l},m}$ is either empty or contractible. This follows from Step \ref{step2} applied with $i=l$. We leave the verification as an exercise to the reader. 
\end{step}
\begin{step} \label{step4} If the sign pattern in Step \ref{step2} is $+\cdots+$ then $V_{D_{\tilde{b}_i},m}=\emptyset$. Indeed, it follows
from the inductive construction and repeated application of Lemma \ref{lem:sign} that $s_{v}^b(m)=+$ for all $v\in V_i$ in this case.
\end{step}
\begin{step} \label{step5} Again by a case by case analysis of the same nature as in Step 
\ref{step2} one shows that the following possibilities can occur for the relation between $V_{D_{\tilde{b}_i},m}$ and $V_{D_{\tilde{b}_{i-1},m}}$.
\begin{enumerate}
\item They are the same.
\item $V_{D_{\tilde{b}_i},m}=\emptyset$ and $V_{D_{\tilde{b}_{i-1},m}}$ is a single point (this needs in particular Step \ref{step4}). 
\item $V_{D_{\tilde{b}_{i-1},m}}$ is obtained from $V_{D_{\tilde{b}_i},m}$ by gluing an interval at an end point.
\item $V_{D_{\tilde{b}_{i-1},m}}$ is obtained from $V_{D_{\tilde{b}_i},m}$ by gluing a 
 triangle at one of its edges. 
\end{enumerate}
In each of the cases the property of being empty or contractible is preserved, finishing the proof.\qedhere
\end{step}
\end{proof}

We set 
\[
S_0=\{(0,i,i+j,j)\mid 0\leq i<\bc,0\leq j<\bd\}\cup \{(0,i,i+j+1,j)\mid 0\leq i<\bc,0\leq j<\bd\}.
\] 

\begin{remark}\label{rmk:nccr0} 
Note that $X_{P_0}=Y\quot G$ where $Y=(k^{\oplus 4})^\vee$ and $G=k^*\times \ZZ_\bc\times \ZZ_\bd$ acts on $k^{\oplus 4}$ with weights $\chi_1\xi_\bc^{-1}\xi_\bd^{-1},\chi_{-1}\xi_\bc,\chi_1,\chi_{-1}\xi_\bd$ (see \S\ref{sec:prel}). Furthermore this action is generic in the sense of  \cite[Def.\S 1.3.4]{SVdB}.
 Since $S_0$ can be naturally identified with $\{0,1\}\times\ZZ_\bc\times \ZZ_\bd=((1/2)[-2,2]_\epsilon\cap X(k^*))\times X(\ZZ_\bc\times\ZZ_\bd)$ for $\epsilon>0$, it follows by \cite[Theorem 1.6.2]{SVdB} combined with \cite[Lemma 4.5.1]{SVdB} (see also \cite[\S 7]{VdB32}) that $\Lambda_0:=\End_{R_{P_0}}(\bigoplus_{b\in S_0} \M_b)$ is a (well-known) NCCR of $R_{P_0}$. 
\end{remark}
For each vertex in $V_0$ we choose a sign  and  
we compatibly \convinde the elements $S_0=\{(b_v)_{v\in V'_0}\}$ using those signs 
to obtain a set $S=S_l=\{(b_v)_{v\in V_0}\}$ ($S_l$ and $S_0$ are in bijection). 

Let $\ol{S}_l^i\subset \prod_{v\in V_i}\ZZ$ be the image of  $S_l$  under the projection $\prod_{v\in V_0}\ZZ\r \prod_{v\in V_i}\ZZ$. 
Put $\Tscr_i=\bigoplus_{b\in \ol{S}^i_l} \Mscr_{\bold{\Sigma}_i,b}$ (see \eqref{eq:can} for notation).
This is a vector bundle on the toric DM-stack $X_{\bf \Sigma_i}$.
\begin{corollary}\label{cor:pretilting}
$\Tscr_i$ has no higher self-extensions. 
\end{corollary}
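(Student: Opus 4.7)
The plan is to reduce the vanishing of $\Ext$'s among the summands of $\Tscr_i$ to a direct application of Lemma \ref{lem:pretilting}. Since the summands are line bundles on the smooth DM stack $X_{\bold{\Sigma}_i}$, one has
\[
\Ext^j_{X_{\bold{\Sigma}_i}}(\Mscr_{\bold{\Sigma}_i,c},\Mscr_{\bold{\Sigma}_i,c'})=H^j(X_{\bold{\Sigma}_i},\Mscr_{\bold{\Sigma}_i,c'-c}),
\]
so it suffices to show that $H^j(X_{\bold{\Sigma}_i},\Mscr_{\bold{\Sigma}_i,c'-c})=0$ for all $j>0$ and all $c,c'\in\ol{S}^i_l$.

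By construction $c=\tilde{c}_i$ and $c'=\tilde{c}'_i$ are the restrictions to $V_i$ of elements $\tilde{c},\tilde{c}'\in S_l$ which are compatibly convexly induced from some $c_0,c_0'\in S_0$ (with a fixed sign sequence and the fixed induction datum $\mathfrak{t}$). Lemma \ref{diff} then tells us that $\tilde{c}'-\tilde{c}$ is convexly induced from $c_0'-c_0$ with the same induction datum but possibly a different sign sequence. Hence $c'-c=(\tilde{c}'-\tilde{c})_i$ is precisely the kind of vector to which Lemma \ref{lem:pretilting} applies, provided the element $c_0'-c_0\in\ZZ^4$ satisfies hypothesis \eqref{\cm}.

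The main step is thus to verify \eqref{\cm} for every difference $c_0'-c_0$ with $c_0,c_0'\in S_0$. Writing $b=c_0'-c_0=(b_1,b_2,b_3,b_4)$ and inspecting the explicit form of $S_0$ one finds that in every case $b_1+b_3-b_2-b_4\in\{-1,0,1\}$. On the other hand, for $m\in M$ the values $A_i:=\langle m,n_i\rangle+b_i$ satisfy $A_1+A_3-A_2-A_4=b_1+b_3-b_2-b_4$ (since $n_1+n_3=n_2+n_4$ for a rectangle). If the sign pattern $[s_1^b(m),\dots,s_4^b(m)]$ were $[+-+-]$, then $A_1,A_3\ge 0$ and $A_2,A_4\le -1$, forcing $b_1+b_3-b_2-b_4\ge 2$; the case $[-+-+]$ is symmetric. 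Either alternative contradicts the bound above, so \eqref{\cm} holds for all differences $c_0'-c_0$.

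Combining these steps, Lemma \ref{lem:pretilting} yields $H^j(X_{\bold{\Sigma}_i},\Mscr_{\bold{\Sigma}_i,c'-c})=0$ for $j>0$ and $c,c'\in\ol{S}^i_l$, and summing over pairs gives $\Ext^j_{X_{\bold{\Sigma}_i}}(\Tscr_i,\Tscr_i)=0$ for $j>0$. The only nontrivial point is the parity argument ruling out the forbidden sign patterns; once this is observed, everything else is a bookkeeping exercise combining Lemma \ref{diff} with Lemma \ref{lem:pretilting}.
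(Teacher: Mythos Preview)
Your proof is correct and follows exactly the same approach as the paper's: reduce to Lemma \ref{lem:pretilting} via Lemma \ref{diff}, after checking that differences of elements in $S_0$ satisfy the forbidden-pattern condition \eqref{\cm}. The paper simply asserts ``it is easy to verify'' the latter, whereas you spell out the rectangle identity $n_1+n_3=n_2+n_4$ and the resulting bound $b_1+b_3-b_2-b_4\in\{-1,0,1\}$, which is precisely the intended elementary check.
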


\begin{proof} 
It is easy to verify that $b-b'$ for $b,b'\in S_0$ satisfy \eqref{\cm}.  As $S_l$ is compatibly \convind from $S_0$, Lemma \ref{diff} enables us to apply Lemma \ref{lem:pretilting}. 
\end{proof}

\begin{lemma}\label{tilting}
$\Tscr_0$ is a classical generator (see \cite{BondalVdB}) of $\Dscr(X_{\bf \Sigma_0})$. 
\end{lemma}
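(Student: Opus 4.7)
My plan is to follow the outline (s2Lambda0)--(s2sod) sketched in the Strategy section. The key idea is that $\Tscr_0$ generates a subcategory equivalent to $\Dscr(\Lambda_0)$, and then the indecomposability of $\Dscr(X_{\bold{\Sigma}_0})$ forces this subcategory to be the whole derived category.

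First, I would identify $\End_{X_{\bold{\Sigma}_0}}(\Tscr_0)$ with $\Lambda_0$. For $b,b'\in S_0$ with corresponding $\tilde{b},\tilde{b}'\in S_l$, Lemma \ref{lem:relations} (in conjunction with \eqref{eq:gulotta} of Lemma \ref{lem:pretilting}) gives
\[
\Hom_{X_{\bold{\Sigma}_0}}(\Mscr_{\bold{\Sigma}_0,\tilde{b}},\Mscr_{\bold{\Sigma}_0,\tilde{b}'})=\Gamma(X_{\bold{\Sigma}_0},\Mscr_{\bold{\Sigma}_0,\tilde{b}'-\tilde{b}})=\M_{\tilde{b}'-\tilde{b}}.
\]
Since $S_l$ is compatibly convexly induced from $S_0$, Lemma \ref{diff} yields $\M_{\tilde{b}'-\tilde{b}}=\M_{b'-b}=\Hom_{R_{P_0}}(\M_b,\M_{b'})$ via \eqref{eq:refhom}. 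Summing gives $\End_{X_{\bold{\Sigma}_0}}(\Tscr_0)\cong\Lambda_0$.

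Second, by Corollary \ref{cor:pretilting} the bundle $\Tscr_0$ has no higher self-extensions, so combining this with the previous paragraph the functor $\RHom_{X_{\bold{\Sigma}_0}}(\Tscr_0,-)$ provides a fully faithful embedding of the thick subcategory $\langle\Tscr_0\rangle$ into $\Dscr(\Lambda_0)$. Since $\Lambda_0$ is an NCCR (Remark \ref{rmk:nccr0}) it has finite global dimension, so $\Perf(\Lambda_0)=\Dscr(\Lambda_0)$, and \cite[Lemma 1.1.1]{PolVdB} then guarantees that the embedding $\Dscr(\Lambda_0)\hookrightarrow\Dscr(X_{\bold{\Sigma}_0})$ is admissible. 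This gives a semi-orthogonal decomposition of $\Dscr(X_{\bold{\Sigma}_0})$ in which one factor is $\Dscr(\Lambda_0)$.

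Third, I would invoke the fact that $\Dscr(X_{\bold{\Sigma}_0})$ admits no non-trivial semi-orthogonal decomposition (compare Lemma \ref{relSerreaff} and Corollary \ref{cor:sod}): because $X_{\bold{\Sigma}_0}\to X_{P_0}$ is crepant, the relative Serre functor over $X_{P_0}$ (which is affine and connected) is trivial, and this rules out non-trivial semi-orthogonal decompositions. Consequently the admissible embedding $\Dscr(\Lambda_0)\hookrightarrow\Dscr(X_{\bold{\Sigma}_0})$ must be an equivalence, which is the statement that $\Tscr_0$ classically generates $\Dscr(X_{\bold{\Sigma}_0})$.

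The main obstacle is the last step: showing that $\Dscr(X_{\bold{\Sigma}_0})$ has no non-trivial semi-orthogonal decomposition. The previous two steps are essentially bookkeeping: the first is a direct application of Lemmas \ref{lem:relations} and \ref{diff}, while the second is a standard tilting argument combined with Polishchuk--Van den Bergh. The crepancy/Serre-functor input is where the non-formal content lies, and it is precisely the reason one needs the global structure of $X_{\bold{\Sigma}_0}$ (rather than purely local cohomology vanishing) to ensure full generation rather than just the existence of a semi-orthogonal factor.
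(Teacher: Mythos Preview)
Your proposal is correct and follows essentially the same approach as the paper's proof: identify $\End_{X_{\bold{\Sigma}_0}}(\Tscr_0)\cong\Lambda_0$ via \eqref{eq:gulotta}, Lemma \ref{diff} and \eqref{eq:refhom}; use Corollary \ref{cor:pretilting} plus $\gldim\Lambda_0<\infty$ and \cite[Lemma 1.1.1]{PolVdB} to obtain an admissible embedding $\Dscr(\Lambda_0)\hookrightarrow\Dscr(X_{\bold{\Sigma}_0})$; and then apply Corollary \ref{cor:sod} to force this embedding to be an equivalence. The only cosmetic point is that in your second step the fully faithful embedding goes from $\Perf(\Lambda_0)$ into $\Dscr(X_{\bold{\Sigma}_0})$ (via $-\Lotimes_{\Lambda_0}\Tscr_0$), not the other way around as your phrasing momentarily suggests, but you correct course immediately afterward.
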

\begin{proof}
By 
\eqref{eq:gulotta}, Lemma \ref{diff} and Remark \ref{rmk:nccr0} (combined with \eqref{eq:refhom}) 
we obtain $\tilde{\Lambda}_0:=\End_{X_{\bf \Sigma_0}}(\Tscr_0)\cong \Lambda_0$.

Since $\Lambda_0$ is an NCCR for $R_{P_0}$ by Remark \ref{rmk:nccr0}, 
 we have in particular $\gldim \Lambda_0<\infty$, and hence the same is true for $\tilde{\Lambda}_0$.
By Corollary~\ref{cor:pretilting} we have a full and faithful embedding of $\Perf(\tilde{\Lambda}_0)$ in $\Dscr(X_{\bf\Sigma_0})$. 
As $\gldim \tilde{\Lambda}_0<\infty$, we have $\Perf(\tilde{\Lambda}_0)=\Dscr(\tilde{\Lambda}_0)$ and moreover $\Dscr(\tilde{\Lambda}_0)$ is admissible in $\Dscr(X_{\bf\Sigma_0})$. 
Since $\Dscr(X_{\bf\Sigma_0})$ does not have a nontrivial semi-orthogonal decomposition by Corollary \ref{cor:sod}, we have $\Dscr(\tilde{\Lambda}_0)\cong \Dscr(X_{\bf\Sigma_0})$. 
\end{proof}

\begin{theorem}\label{itilting}
$\Tscr_i$ is a tilting bundle on $X_{{\bf \Sigma}_i}$.
\end{theorem}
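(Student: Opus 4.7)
The plan is to check (T1) and (T2) for $\Tscr_i$ on $X_{{\bf \Sigma}_i}$. Condition (T2), the vanishing of higher self-extensions, is already Corollary \ref{cor:pretilting}, so only the generation condition (T1) requires work.

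The key observation is that, because $P_i\subset P_0$ and the triangulations have been built compatibly, $\Sigma_i$ is a subfan of $\Sigma_0$, so the canonical map $j_i\colon X_{{\bf \Sigma}_i}\hookrightarrow X_{{\bf \Sigma}_0}$ is an open immersion of smooth noetherian DM stacks. I would then identify $j_i^\ast\Tscr_0$ with $\Tscr_i$ up to multiplicities of summands: from the explicit description of the line bundles via characters on $Y_\Sigma/G$ one has $j_i^\ast\Mscr_{{\bf \Sigma}_0,b}=\Mscr_{{\bf \Sigma}_i,b|_{V_i}}$, so that
\[
j_i^\ast\Tscr_0=\bigoplus_{b'\in\ol{S}_l^{\,i}}\Mscr_{{\bf \Sigma}_i,b'}^{\oplus m_{b'}}
\]
for certain positive integers $m_{b'}$. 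Hence $j_i^\ast\Tscr_0$ and $\Tscr_i$ have the same indecomposable direct summands and classically generate the same thick subcategory of $\Dscr(X_{{\bf \Sigma}_i})$.

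The final step is the general principle that generation of $D_{\Qch}$ is preserved by pullback along quasi-compact open immersions. Explicitly, given $F\in D_{\Qch}(X_{{\bf \Sigma}_i})$ with $\RHom(j_i^\ast\Tscr_0,F)=0$, adjunction yields $\RHom(\Tscr_0,Rj_{i,\ast}F)=0$, whence $Rj_{i,\ast}F=0$ because $\Tscr_0$ generates $D_{\Qch}(X_{{\bf \Sigma}_0})$ (which follows from Lemma \ref{tilting} combined with the compact generation of $D_{\Qch}$ by $\Dscr$ recalled before Proposition \ref{sec:standard}). Since $j_i$ is an open immersion, $F\cong j_i^\ast Rj_{i,\ast}F=0$, which establishes (T1). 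The only mildly technical point is the preservation of quasi-coherence under $Rj_{i,\ast}$, a standard property for open immersions between noetherian DM stacks; granting this, the argument is entirely formal and the real content has been placed upstream in Lemma \ref{tilting} and Corollary \ref{cor:pretilting}.
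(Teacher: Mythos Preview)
Your proof is correct and follows essentially the same approach as the paper: (T2) is Corollary \ref{cor:pretilting}, and (T1) is deduced from Lemma \ref{tilting} using that $X_{{\bf \Sigma}_i}$ is an open substack of $X_{{\bf \Sigma}_0}$ and $\Tscr_i$ is the restriction of $\Tscr_0$. The paper states this in two sentences without spelling out the adjunction argument or the multiplicity bookkeeping; your version just makes these routine points explicit.
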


\begin{proof}
By Corollary \ref{cor:pretilting} it suffices to show that $\Tscr_i$ is a classical generator of $\Dscr(X_{{\bf \Sigma}_i})$. Since $X_{{\bf \Sigma}_i}$ is isomorphic to an open substack of 
$X_{{\bf \Sigma}_0}$ and $\Tscr_i$ is a restriction of $\Tscr_0$ to $X_{\bf\Sigma_i}$ this follows by Lemma \ref{tilting}.
\end{proof}

\section{Tilting bundle on a stacky resolution using the Ishii-Ueda approach}\label{IshiiUeda}
We start in exactly the same way as in \S\ref{Gulotta}. In this section we will construct a tilting bundle on $X_{\bold{\Sigma}}$
using a procedure inspired by \cite{IshiiUeda}. This will yield again an NCCR of $R_P$ using Proposition \ref{sec:standard}.

Now we embed $P$ in a triangle $P_0$ with vertices $(0,0,1),(0,\bc,1),(\bd,0,1)$. 
To obtain $P_i$, $i>0$, from $P_{i-1}$ one removes a (corner) vertex of $P_{i-1}$ and takes the convex hull of the remaining lattice points in $P_{i-1}$. Let $l$ be such that $P_l=P$. 
 We triangulate $P_0$ as in \S\ref{Gulotta} by first triangulating $P$ (without extra vertices) and then triangulating 
$P_{i-1}-P_{i}$ by adding edges from the unique vertex in $P_{i-1}$ which is not on $P_i$ to the vertices of $P_{j}$, $j\ge i$ which are not on $\partial P_{i-1}$. See Figure \ref{fig:iu}.

\begin{figure}[H]
\begin{tikzpicture}[scale=1.00]
\path[draw,color=lightgray] (0.00,0.00) grid (4.00,1.00);
\path[draw,color=lightgray] (0.00,1.00) grid (3.00,2.00);
\path[draw,color=lightgray] (0.00,2.00) grid (2.00,3.00);
\path[draw,color=lightgray] (0.00,3.00) grid (1.00,4.00);

\path[draw,line width=0.3mm] (0.00,0.00)-- (4.00,0.00)-- (4.00,1.00)-- (3.00,2.00)-- (1.00,3.00)-- (0.00,0.00);
\node[] at (0.70,4.70) {$P_0$};
\node[] at (3.70,1.70) {$P$};
\tkzDefPoint(0,0){a};
\tkzDefPoint(4,0){b};
\tkzDefPoint(5,0){c};
\tkzDefPoint(4,1){d};
\tkzDefPoint(3,2){e};
\tkzDefPoint(2,3){f};
\tkzDefPoint(1,3){g};
\tkzDefPoint(0,3){h};
\tkzDefPoint(1,4){i};
\tkzDefPoint(0,5){j};
\tkzDefPoint(0,4){k};
\tkzDefPoint(0,2){l};
\tkzDefPoint(0,1){m};
\path[draw,color=ruta1,](0,3)--(1,4);
\path[draw,color=ruta1,](0,0)--(1,4);
\path[draw,color=ruta1,](0,5)--(0,3);
\path[draw,color=ruta1,](0,5)--(1,4);
\path[draw,color=ruta1,](1,3)--(1,4);
\path[draw,color=ruta1,](2,3)--(1,4);
\path[draw,color=ruta1,](1,3)--(2,3);
\path[draw,color=ruta1,](2,3)--(3,2);
\path[draw,color=ruta1,](4,0)--(5,0);
\path[draw,color=siva,](0,0)--(3,2);
\path[draw,color=siva,](0,0)--(4,1);
\path[draw,color=ruta1,](5,0)--(4,1);
\path[draw,color=ruta1,](0,0)--(0,3);
\path[draw,color=ruta1,](0,4)--(1,4);
\path[draw,color=ruta1,](0,2)--(1,4);
\path[draw,color=ruta1,](0,1)--(1,4);
\tkzDrawPoint[color=ruta1,fill=ruta,size=10](j)
\tkzDrawPoint[color=ruta1,fill=ruta,size=10](h)
\tkzDrawPoint[color=ruta1,fill=ruta,size=10](i)
\tkzDrawPoint[color=ruta1,fill=ruta,size=10](f)
\tkzDrawPoint[color=ruta1,fill=ruta,size=10](c)
\tkzDrawPoint[color=ruta1,fill=ruta,size=10](k)
\tkzDrawPoint[color=ruta1,fill=ruta,size=10](l)
\tkzDrawPoint[color=ruta1,fill=ruta,size=10](m)
\tkzFillPolygon[fill=siva,opacity=0.5](a,g,e)
\tkzFillPolygon[fill=siva,opacity=0.5](a,e,d)
\tkzFillPolygon[fill=siva,opacity=0.5](a,d,b)
\tkzFillPolygon[fill=ruta,opacity=0.35](j,k,i)
\tkzFillPolygon[fill=ruta,opacity=0.43](k,h,i)
\tkzFillPolygon[fill=ruta,opacity=0.5](l,h,i)
\tkzFillPolygon[fill=ruta,opacity=0.57](l,m,i)
\tkzFillPolygon[fill=ruta,opacity=0.65](a,m,i)
\tkzFillPolygon[fill=ruta,opacity=0.75](a,g,i)
\tkzFillPolygon[fill=ruta,opacity=0.75](i,g,f)
\tkzFillPolygon[fill=ruta,opacity=0.87](g,f,e)
\tkzFillPolygon[fill=ruta,opacity=0.3](b,c,d)
\end{tikzpicture}

\caption{\textcolor{ruta}{\Large$\bullet$} corner vertices}
\label{fig:iu}
\end{figure}
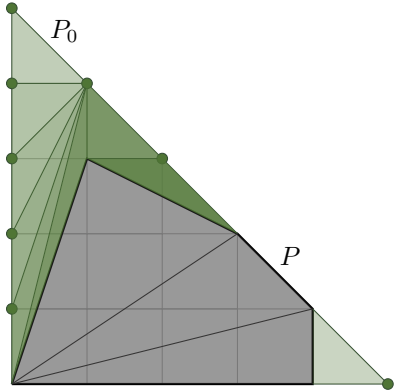

The objects $\Sigma_i$, ${\bf \Sigma_i}$, $V_i$, $V_i'$ are defined in the same way as in \S\ref{Gulotta}. 
We set 
\begin{align*}
S_0&=\{(i,j,1)\mid 0\leq i<\bc,0\leq j<\bd\}.
\end{align*} 
Note that $X_{P_0}\cong Y\quot G$, where $Y=k^3$, $G=\ZZ_\bc\times \ZZ_\bd$ acts on $k^3$ with weights $\xi_\bc^{-1}\xi_\bd^{-1},\xi_\bc,\xi_\bd$ (see \S\ref{sec:prel}). This action is generic. 
Since $G$ is a finite abelian group and $S_0\cong X(G)$, it is classical (see e.g.\  \cite{Auslander}, \cite[\S J]{Leuschke}, \cite[\S1.2]{SVdB}, \cite[\S1.4]{Wemyss}) that 
 $\Lambda_0=\End_{R_{P_0}}(\bigoplus_{b\in S_0} \M_b)$ is an NCCR of $R_{P_0}$. 

Below we will inductively extend $S_{i-1}=\{(b_i)_{i\in V'_{i-1}}\}$ to $S_i=\{(b_i)_{i\in V'_i}\}$ using compatible
convex induction so that the required tilting bundle will be obtained from $S_l=\{(b_i)_{i\in V_0}\}$ (note $V'_l=V_0$).
However here the induction process appears to be more delicate and in particular our proof depends on the choice
of a specific sign sequence (see \S\ref{subsec:convind}) in order to obtain an analogue for Lemma \ref{lem:pretilting}. See Remark \ref{rem:sign}.
\subsection{Definition of\mathversion{bold} $S_i$}
\label{sec:Si}
Let $v_{-1}\in V'_{i-1}$ be the vertex of $P_{i-1}$ that we remove to construct $P_i$, and let $v_0,v_{r+1}\in \partial P_{i-1}\cap\partial P_i\cap \partial(P_{i-1}\setminus P_{i})$.  
Let $v_1,\dots,v_r$ be the other lattice points 
on $\partial(P_{i-1}\setminus P_{i})$  such that $v_0,v_1,\dots,v_{r},v_{r+1}$ are consecutive. 
\begin{remark}
Note that $V'_{i-1}\cup\{v_0,v_1,\dots,v_{r+1}\}$ may be strictly larger than $V'_i$ since some of the $v_j$ may not be vertices of  $P_i$ (they will of course be elements of
$\partial P_i$). For a simple example how this can happen
look at \cite[Figure 10.3]{CoxLittleSchenck}.
\end{remark}

Let $b\in S_{i-1}$. We describe how to induce $b$ to $V'_i$. This will be done for all $b\in S_{i-1}$ in a compatible way.
\begin{enumerate}[(i)]
\item \label{ind1}
We first \convinde $b$ to $V'_{i-1}\cup \{v_0,v_{r+1}\}$ for a fixed sign sequence. We keep denoting the result by $b$. 
\item We induce up $b$ to $V'_{i-1}\cup \{v_0,\ldots,v_{r+1}\}$ by triangle convex induction (see \S\ref{sec:triangle}),  and set    
$b^{(i)}=(b_v)_{v\in V'_{i}}$. We let $S_i$ be the collection of $b^{(i)}$ so obtained.
\end{enumerate}
We also adopt the notation  $\ol S^i_l$ from \S\ref{Gulotta}. We can now state the following analogue of Theorem \ref{itilting}.
\begin{theorem}\label{iuitilting}
Put $\Tscr_i^{\triangle}:=\bigoplus_{b\in \ol{S}^i_l} \Mscr_{\bold{\Sigma}_i,b}$. 
Then $\Tscr_i^{\triangle}$ is a tilting bundle on $X_{{\bf \Sigma}_i}$.
\end{theorem}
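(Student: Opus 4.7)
The plan is to mirror the proof of Theorem \ref{itilting} from the Gulotta approach, with interval convex induction replaced by its triangle variant. That is, I would establish (T2) through an inductive sign/cohomology analysis built on Proposition \ref{lem:sup:pretiltingIU}, and I would establish (T1) by first handling the ambient $i=0$ case via Corollary \ref{cor:sod} and then restricting to the open substack $X_{\bf\Sigma_i}\subset X_{\bf\Sigma_0}$.

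For (T2), I would first prove an Ishii--Ueda analogue of Lemma \ref{lem:pretilting}: given an initial $b_0\in\ZZ^3$ satisfying a suitable sign condition on $\{s^{b_0}_{v}(m)\}_{v\in V_0'}$ for every $m\in M$, the iteratively induced tuple $\tilde{b}_i=(b_v)_{v\in V_i}$ satisfies $H^j(X_{\bf\Sigma_i},\Mscr_{\bf\Sigma_i,\tilde{b}_i})=0$ for $j>0$. Via \eqref{eq:cohomMb} this reduces to showing that $V_{D_{\tilde{b}_i},m}$ is empty or contractible. The key inductive invariant to maintain is a controlled sign pattern along $\partial P_i\cap V_i$: at each IU step we first convexly induce to the two endpoints $v_0,v_{r+1}$ of the cutting edge (step (i) of \S\ref{sec:Si}), whose signs are controlled by Lemma \ref{lem:sign}, and then triangle-convexly induce to $v_1,\ldots,v_r$, whose signs are controlled precisely by Proposition \ref{lem:sup:pretiltingIU}. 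Combined, the possible sign sequences on $\partial P_i\cap V_i$ are cyclically of the form ``a run of $+$ followed by a run of $-$'', and a case analysis (as in Steps \ref{step1}--\ref{step5} of Lemma \ref{lem:pretilting}) shows that $V_{D_{\tilde{b}_i},m}$ is either empty or a union of triangles retracting onto a star around a single vertex, hence contractible. Applying this with $b_0=b-b'$ for $b,b'\in S_0$ (a condition easy to verify for $S_0=\{(i,j,1)\mid 0\le i<\bc,\,0\le j<\bd\}$), Lemma \ref{diff} gives $\Ext^{>0}(\Tscr_0^\triangle,\Tscr_0^\triangle)=0$, and hence the same for $\Tscr_i^\triangle$.

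For (T1), I would first show that $\Tscr_0^\triangle$ classically generates $\Dscr(X_{\bf\Sigma_0})$. By Lemma \ref{lem:relations}\eqref{def:module} and Corollary \ref{convind}, $\Gamma(X_{\bf\Sigma_0},\Mscr_{\bf\Sigma_0,\tilde{b}})=\M_b$ for $b\in S_0$. Combined with \eqref{eq:refhom}, Lemma \ref{diff} and the Ext vanishing already proved, this gives $\End_{X_{\bf\Sigma_0}}(\Tscr_0^\triangle)\cong\Lambda_0$, where $\Lambda_0=\End_{R_{P_0}}(\bigoplus_{b\in S_0}\M_b)$ is the classical skew group algebra NCCR of $R_{P_0}$. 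In particular $\gldim\Lambda_0<\infty$, so by \cite[Lemma 1.1.1]{PolVdB} the full and faithful embedding $\Perf(\Lambda_0)=\Dscr(\Lambda_0)\hookrightarrow\Dscr(X_{\bf\Sigma_0})$ coming from (T2) is admissible. Corollary \ref{cor:sod} rules out any nontrivial semi-orthogonal decomposition, forcing $\Dscr(\Lambda_0)\cong\Dscr(X_{\bf\Sigma_0})$. This is exactly the argument of Lemma \ref{tilting}. Finally, since $X_{\bf\Sigma_i}$ is an open substack of $X_{\bf\Sigma_0}$ and $\Tscr_i^\triangle$ is the restriction of $\Tscr_0^\triangle$, classical generation descends to $\Dscr(X_{\bf\Sigma_i})$, completing the proof.

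The hard part will be the sign-pattern analysis underlying the (T2) step. In the Gulotta approach, removing a corner triangle changes the cyclic sign pattern on the boundary by a single substitution, which was dispatched by Lemma \ref{lem:sign} alone. In the IU approach, removing a single corner vertex can introduce an arbitrary chain of new boundary lattice points $v_1,\ldots,v_r$, and the sign pattern on this chain is \emph{not} arbitrary: it is precisely one of the four patterns in Proposition \ref{lem:sup:pretiltingIU}, whose proof hinges on the subtle identity \eqref{eq:pom1} and on the asymmetric choice of $\lfloor\cdot\rfloor$ over $\lceil\cdot\rceil$ in the definition \eqref{eq:defbj0} of triangle convex induction (see Remark \ref{rem:sign}). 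Verifying that each admissible pattern truly yields a contractible (or empty) $V_{D_{\tilde{b}_i},m}$, and that this property is preserved under each IU move, is where the combinatorial bookkeeping becomes genuinely delicate.
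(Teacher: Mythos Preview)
Your proposal is correct and follows essentially the same route as the paper: the paper proves (T2) via Proposition \ref{lem:pretiltingIU}, which adapts the Steps of Lemma \ref{lem:pretilting} using Proposition \ref{lem:sup:pretiltingIU} for the new boundary vertices (noting that in Step \ref{step5}(4) a polygon is now glued along a contractible string of edges rather than a single edge), and then deduces (T1) exactly as in Lemma \ref{tilting} and Theorem \ref{itilting} by the $\Lambda_0$-is-an-NCCR plus no-nontrivial-SOD argument followed by restriction. One small simplification: since $P_0$ is a triangle, $|V_0'|=3$ and every cyclic sign pattern on three points is automatically of the required $+\cdots+-\cdots-$ form, so no separate hypothesis analogous to \eqref{\cm} is needed.
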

This is proved using Proposition \ref{lem:pretiltingIU} below, which allows us to suitably adapt the proof of Corollary \ref{cor:pretilting}, in exactly the same way as 
Theorem \ref{itilting}. 
\begin{proposition}\label{lem:pretiltingIU}
Let $\tilde{b}_i=b-b'$ for $b,b'\in \ol S^i_l$. 
Then $H^j(X_{\Sigma_i},\Oscr(D_{\tilde{b}_i}))=0$ for $j=1,2$.
\end{proposition}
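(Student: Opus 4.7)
The plan is to mirror the proof of Lemma \ref{lem:pretilting}. By Lemma \ref{lem:relations} and \eqref{eq:cohomMb},
\[
H^j(X_{\Sigma_i},\Oscr(D_{\tilde{b}_i}))_m=\tilde H^{j-1}(V_{D_{\tilde{b}_i},m},k),
\]
so I reduce to showing that for every $m\in M$ the simplicial complex $V_{D_{\tilde{b}_i},m}$ is empty or contractible (which actually gives vanishing in all positive cohomological degrees, not only $j=1,2$).

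Writing $c:=b-b'\in\ZZ^{V_0}$ and $c_0:=c|_{V'_0}\in\ZZ^3$, Lemma \ref{diff} says that $c$ is obtained from $c_0$ by the same alternation of interval- and triangle-convex inductions used in \S\ref{sec:Si} to construct $S_l$ from $S_0$, but with a possibly different sign sequence in the interval steps. The sign function $v\mapsto s^c_v(m)$ on $V_0$ is therefore controlled by two structural facts: (A) on the lattice points along any straight segment of $\partial P_j$ the signs form a pattern $+\cdots+-\cdots-$ (up to reflection or monochromatic), by Lemma \ref{lem:intervalconvexinduction} applied to step (i) of \S\ref{sec:Si}; (B) on each set $\{v_{-1},v_0,\dots,v_{r+1}\}$ arising from a triangle removal the signs follow one of the two three-block patterns of Proposition \ref{lem:sup:pretiltingIU}.

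Next I would carry out a descending induction on $i$, directly paralleling Steps 3 and 5 of Lemma \ref{lem:pretilting}. For the base case $i=l$ I would analyze $V_{D_{\tilde{b}_l},m}$ directly, using (A) on the edges of the triangulation of $P$ together with (B) iterated through the whole removal history to determine the possible sign patterns around $\partial P$ and on any interior vertices introduced at earlier stages; a case analysis then yields empty-or-contractibility. For the inductive step $i\to i-1$, passing from $\Sigma_i$ to $\Sigma_{i-1}$ adjoins the corner vertex $v_{-1}$ together with the fan of new triangles $[v_{-1},v_j,v_{j+1}]$ filling $P_{i-1}\setminus P_i$; the pattern (B) restricts how $V_{D_{\tilde{b}_{i-1}},m}$ can differ from $V_{D_{\tilde{b}_i},m}$ to a short list of cases (no change, adjoining a single point, or gluing a cone from $v_{-1}$ over a connected arc of the existing complex), each of which preserves the empty-or-contractible property.

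The hard part will be the case analysis driven by Proposition \ref{lem:sup:pretiltingIU}: whereas Gulotta's Lemma \ref{lem:sign} permits only a single sign transition along the cutting edge, the IU pattern admits up to two transitions on $v_0,\dots,v_{r+1}$. Verifying that no nontrivial loop is created in $V_{D_{\tilde{b}_i},m}$ during the inductive step, and that the iterated application of (B) leaves enough control over the boundary sign pattern on $\partial P$ to close the base case, is the main combinatorial obstacle, even though the overall architecture of the argument is exactly analogous to Lemma \ref{lem:pretilting}.
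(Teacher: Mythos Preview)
Your proposal is essentially the paper's own argument: reduce via \eqref{eq:cohomMb} to empty-or-contractibility of $V_{D_{\tilde b_i},m}$, establish the boundary sign pattern on $\partial P_i$ by iterating through the removal history (the paper's Step~\ref{step2}, now invoking Proposition~\ref{lem:sup:pretiltingIU} in place of Lemma~\ref{lem:sign}), handle the base case $i=l$ as in Step~3, and run the inductive step $i\to i-1$ where the new feature is that one glues the cone from $v_{-1}$ over a \emph{contiguous} block of $-$'s among $v_0,\dots,v_{r+1}$ (the paper phrases this as ``in Step~\ref{step5}(4) a polygon is glued at a contractible string of edges'').

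One small correction: your sentence about Lemma~\ref{diff} is not quite right. Lemma~\ref{diff} says that $c=b-b'$ is convexly induced from $c_0$ with the same induction datum but a sign sequence that may change \emph{everywhere}, not only in the interval steps; in particular $c$ need not be obtained by triangle convex induction. This does not affect your argument, because Proposition~\ref{lem:sup:pretiltingIU} is stated directly for the difference $c=b'-b$ of two triangle-convex-induced vectors, so (B) is available without any hypothesis on $c$ itself. Also, the ``interior vertices'' you mention in the base case do not actually occur: the triangulation of $P=P_l$ is without extra vertices, so $V_l$ consists only of the corners of $P$; what you need there is precisely the iterated Step~\ref{step2} argument to control the signs at those corners.
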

\begin{proof}
With the aid of Proposition \ref{lem:sup:pretiltingIU} the proof follows similar lines as the proof of Lemma \ref{lem:pretilting}, therefore we only point out the necessary adjustment.  
Note that Step \ref{step1} is relevant because of \eqref{ind1}. 
 In Step \ref{step2} we need to additionally use Proposition \ref{lem:sup:pretiltingIU}, in Step \ref{step5} (4) a polygon is glued at a (contractible) string of edges. 
\end{proof}

\section{Example}
\label{sec:example}
As a sanity check we have verified our procedures for constructing
toric NCCR's for 3-dimensional toric singularities on an explicit moderately complex
example.  Let $P$ be as in Figure \ref{fig:gulotta}. We have $G=G_m^2$
and $R_P\cong (\Sym W)^G$, where $G$ acts on $W$ with weights
$(-1, 0), (3, -1),\allowbreak (-2, 3), (-2, -3), (2, 1)$ (see Figure
\ref{fig:weights}).  The Cohen-Macaulay modules of covariants
can be deduced by \cite{VdB1} or by  \cite{Bocklandt} (see Proposition
\ref{lemmaB} below).

If we follow Gulotta's (resp. the Ishii-Ueda)
procedure as shown in
Figure \ref{fig:gulotta} (resp. \ref{fig:iu}) for a sign sequence
$\mathfrak{s}=(-)_i$ we get a module of covariants $M$ corresponding to
the grey weights in the left (resp. right) Figure \ref{fig:weights}. 
We observe the following comforting facts.
\begin{itemize}
\item Both sets of grey weights do indeed contain the same number of elements (16).
\item The endomorphism rings of the corresponding modules of covariants
are Cohen-Macaulay.
\item The number 16 is also equal to $2\,\rm{Vol}(P)$, 
and so me may use Proposition \ref{prop:Vol} below to double check that the endomorphism rings are indeed NCCRs. 
\end{itemize}

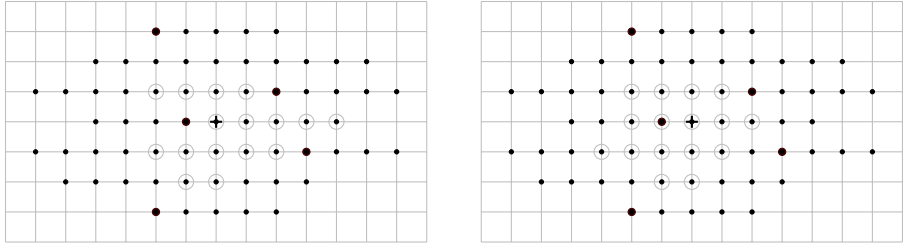
\begin{figure}[H]{}
\centering
\begin{subfigure}{.5\textwidth}
\centering
\begin{tikzpicture}[scale=0.40]
\path[draw,color=lightgray] (-7.00,-4.00) grid (7.00,4.00);
\path[draw,fill=vino,color=vino] (-1.00,0.00) circle [radius=0.12];
\path[draw,fill=vino,color=vino] (3.00,-1.00) circle [radius=0.12];
\path[draw,fill=vino,color=vino] (-2.00,3.00) circle [radius=0.12];
\path[draw,fill=vino,color=vino] (-2.00,-3.00) circle [radius=0.12];
\path[draw,fill=vino,color=vino] (2.00,1.00) circle [radius=0.12];
\path[draw,color=lightgray] (-2.00,-1.00) circle [radius=0.2500];
\path[draw,color=lightgray] (-2.00,1.00) circle [radius=0.2500];
\path[draw,color=lightgray] (-1.00,-2.00) circle [radius=0.2500];
\path[draw,color=lightgray] (-1.00,-1.00) circle [radius=0.2500];
\path[draw,color=lightgray] (-1.00,1.00) circle [radius=0.2500];
\path[draw,color=lightgray] (0.00,-2.00) circle [radius=0.2500];
\path[draw,color=lightgray] (0.00,-1.00) circle [radius=0.2500];
\path[draw,color=lightgray] (0.00,0.00) circle [radius=0.2500];
\path[draw,color=lightgray] (0.00,1.00) circle [radius=0.2500];
\path[draw,color=lightgray] (4.00,0.00) circle [radius=0.2500];
\path[draw,color=lightgray] (1.00,-1.00) circle [radius=0.2500];
\path[draw,color=lightgray] (1.00,0.00) circle [radius=0.2500];
\path[draw,color=lightgray] (1.00,1.00) circle [radius=0.2500];
\path[draw,color=lightgray] (2.00,-1.00) circle [radius=0.2500];
\path[draw,color=lightgray] (2.00,0.00) circle [radius=0.2500];
\path[draw,color=lightgray] (3.00,0.00) circle [radius=0.2500];
%\path[draw,color=black,thick] (0.00,-0.150) -- (0.00,0.150);
%\path[draw,color=black,thick] (-0.150,0.00) -- (0.150,0.00);
\path[draw,color=black,thick] (0.00,-0.200) -- (0.00,0.200);
\path[draw,color=black,thick] (-0.200,0.00) -- (0.200,0.00);
\path[draw,fill=black,color=black] (-6.00,-1.00) circle [radius=0.0700];
\path[draw,fill=black,color=black] (-6.00,1.00) circle [radius=0.0700];
\path[draw,fill=black,color=black] (-5.00,1.00) circle [radius=0.0700];
\path[draw,fill=black,color=black] (-5.00,-1.00) circle [radius=0.0700];
\path[draw,fill=black,color=black] (-5.00,-2.00) circle [radius=0.0700];
\path[draw,fill=black,color=black] (-4.00,-2.00) circle [radius=0.0700];
\path[draw,fill=black,color=black] (-4.00,-1.00) circle [radius=0.0700];
\path[draw,fill=black,color=black] (-4.00,0.00) circle [radius=0.0700];
\path[draw,fill=black,color=black] (-4.00,1.00) circle [radius=0.0700];
\path[draw,fill=black,color=black] (-4.00,2.00) circle [radius=0.0700];
\path[draw,fill=black,color=black] (-3.00,-2.00) circle [radius=0.0700];
\path[draw,fill=black,color=black] (-3.00,-1.00) circle [radius=0.0700];
\path[draw,fill=black,color=black] (-3.00,0.00) circle [radius=0.0700];
\path[draw,fill=black,color=black] (-3.00,1.00) circle [radius=0.0700];
\path[draw,fill=black,color=black] (-3.00,2.00) circle [radius=0.0700];
\path[draw,fill=black,color=black] (-2.00,-3.00) circle [radius=0.0700];
\path[draw,fill=black,color=black] (-2.00,-2.00) circle [radius=0.0700];
\path[draw,fill=black,color=black] (-2.00,-1.00) circle [radius=0.0700];
\path[draw,fill=black,color=black] (-2.00,0.00) circle [radius=0.0700];
\path[draw,fill=black,color=black] (-2.00,1.00) circle [radius=0.0700];
\path[draw,fill=black,color=black] (-2.00,2.00) circle [radius=0.0700];
\path[draw,fill=black,color=black] (-2.00,3.00) circle [radius=0.0700];
\path[draw,fill=black,color=black] (-1.00,-3.00) circle [radius=0.0700];
\path[draw,fill=black,color=black] (-1.00,-2.00) circle [radius=0.0700];
\path[draw,fill=black,color=black] (-1.00,-1.00) circle [radius=0.0700];
\path[draw,fill=black,color=black] (-1.00,0.00) circle [radius=0.0700];
\path[draw,fill=black,color=black] (-1.00,1.00) circle [radius=0.0700];
\path[draw,fill=black,color=black] (-1.00,2.00) circle [radius=0.0700];
\path[draw,fill=black,color=black] (-1.00,3.00) circle [radius=0.0700];
\path[draw,fill=black,color=black] (0.00,-3.00) circle [radius=0.0700];
\path[draw,fill=black,color=black] (0.00,-2.00) circle [radius=0.0700];
\path[draw,fill=black,color=black] (0.00,-1.00) circle [radius=0.0700];
\path[draw,fill=black,color=black] (0.00,0.00) circle [radius=0.0700];
\path[draw,fill=black,color=black] (0.00,1.00) circle [radius=0.0700];
\path[draw,fill=black,color=black] (0.00,2.00) circle [radius=0.0700];
\path[draw,fill=black,color=black] (0.00,3.00) circle [radius=0.0700];
\path[draw,fill=black,color=black] (1.00,-3.00) circle [radius=0.0700];
\path[draw,fill=black,color=black] (1.00,-2.00) circle [radius=0.0700];
\path[draw,fill=black,color=black] (1.00,-1.00) circle [radius=0.0700];
\path[draw,fill=black,color=black] (1.00,0.00) circle [radius=0.0700];
\path[draw,fill=black,color=black] (1.00,1.00) circle [radius=0.0700];
\path[draw,fill=black,color=black] (1.00,2.00) circle [radius=0.0700];
\path[draw,fill=black,color=black] (1.00,3.00) circle [radius=0.0700];
\path[draw,fill=black,color=black] (2.00,-3.00) circle [radius=0.0700];
\path[draw,fill=black,color=black] (2.00,-2.00) circle [radius=0.0700];
\path[draw,fill=black,color=black] (2.00,-1.00) circle [radius=0.0700];
\path[draw,fill=black,color=black] (2.00,0.00) circle [radius=0.0700];
\path[draw,fill=black,color=black] (2.00,1.00) circle [radius=0.0700];
\path[draw,fill=black,color=black] (2.00,2.00) circle [radius=0.0700];
\path[draw,fill=black,color=black] (2.00,3.00) circle [radius=0.0700];
\path[draw,fill=black,color=black] (4.00,-1.00) circle [radius=0.0700];
\path[draw,fill=black,color=black] (4.00,0.00) circle [radius=0.0700];
\path[draw,fill=black,color=black] (4.00,1.00) circle [radius=0.0700];
\path[draw,fill=black,color=black] (4.00,2.00) circle [radius=0.0700];
\path[draw,fill=black,color=black] (3.00,-2.00) circle [radius=0.0700];
\path[draw,fill=black,color=black] (3.00,-1.00) circle [radius=0.0700];
\path[draw,fill=black,color=black] (3.00,0.00) circle [radius=0.0700];
\path[draw,fill=black,color=black] (3.00,1.00) circle [radius=0.0700];
\path[draw,fill=black,color=black] (3.00,2.00) circle [radius=0.0700];
\path[draw,fill=black,color=black] (5.00,-1.00) circle [radius=0.0700];
\path[draw,fill=black,color=black] (5.00,1.00) circle [radius=0.0700];
\path[draw,fill=black,color=black] (5.00,2.00) circle [radius=0.0700];
\path[draw,fill=black,color=black] (6.00,-1.00) circle [radius=0.0700];
\path[draw,fill=black,color=black] (6.00,1.00) circle [radius=0.0700];
\end{tikzpicture}
\end{subfigure}%
\begin{subfigure}{.5\textwidth}
\centering
\begin{tikzpicture}[scale=0.40]
\path[draw,color=lightgray] (-7.00,-4.00) grid (7.00,4.00);
\path[draw,fill=vino,color=vino] (-1.00,0.00) circle [radius=0.12];
\path[draw,fill=vino,color=vino] (3.00,-1.00) circle [radius=0.12];
\path[draw,fill=vino,color=vino] (-2.00,3.00) circle [radius=0.12];
\path[draw,fill=vino,color=vino] (-2.00,-3.00) circle [radius=0.12];
\path[draw,fill=vino,color=vino] (2.00,1.00) circle [radius=0.12];
\path[draw,color=lightgray] (-3.00,-1.00) circle [radius=0.250];
\path[draw,color=lightgray] (-2.00,-1.00) circle [radius=0.250];
\path[draw,color=lightgray] (-2.00,0.00) circle [radius=0.250];
\path[draw,color=lightgray] (-2.00,1.00) circle [radius=0.250];
\path[draw,color=lightgray] (-1.00,-2.00) circle [radius=0.250];
\path[draw,color=lightgray] (-1.00,-1.00) circle [radius=0.250];
\path[draw,color=lightgray] (-1.00,0.00) circle [radius=0.250];
\path[draw,color=lightgray] (-1.00,1.00) circle [radius=0.250];
\path[draw,color=lightgray] (0.00,-2.00) circle [radius=0.250];
\path[draw,color=lightgray] (0.00,-1.00) circle [radius=0.250];
\path[draw,color=lightgray] (0.00,0.00) circle [radius=0.250];
\path[draw,color=lightgray] (0.00,1.00) circle [radius=0.250];
\path[draw,color=lightgray] (1.00,-1.00) circle [radius=0.250];
\path[draw,color=lightgray] (1.00,0.00) circle [radius=0.250];
\path[draw,color=lightgray] (1.00,1.00) circle [radius=0.250];
\path[draw,color=lightgray] (2.00,0.00) circle [radius=0.250];
\path[draw,color=black,thick] (0.00,-0.200) -- (0.00,0.200);
\path[draw,color=black,thick] (-0.200,0.00) -- (0.200,0.00);
\path[draw,fill=black,color=black] (-6.00,-1.00) circle [radius=0.0700];
\path[draw,fill=black,color=black] (-6.00,1.00) circle [radius=0.0700];
\path[draw,fill=black,color=black] (-5.00,1.00) circle [radius=0.0700];
\path[draw,fill=black,color=black] (-5.00,-1.00) circle [radius=0.0700];
\path[draw,fill=black,color=black] (-5.00,-2.00) circle [radius=0.0700];
\path[draw,fill=black,color=black] (-4.00,-2.00) circle [radius=0.0700];
\path[draw,fill=black,color=black] (-4.00,-1.00) circle [radius=0.0700];
\path[draw,fill=black,color=black] (-4.00,0.00) circle [radius=0.0700];
\path[draw,fill=black,color=black] (-4.00,1.00) circle [radius=0.0700];
\path[draw,fill=black,color=black] (-4.00,2.00) circle [radius=0.0700];
\path[draw,fill=black,color=black] (-3.00,-2.00) circle [radius=0.0700];
\path[draw,fill=black,color=black] (-3.00,-1.00) circle [radius=0.0700];
\path[draw,fill=black,color=black] (-3.00,0.00) circle [radius=0.0700];
\path[draw,fill=black,color=black] (-3.00,1.00) circle [radius=0.0700];
\path[draw,fill=black,color=black] (-3.00,2.00) circle [radius=0.0700];
\path[draw,fill=black,color=black] (-2.00,-3.00) circle [radius=0.0700];
\path[draw,fill=black,color=black] (-2.00,-2.00) circle [radius=0.0700];
\path[draw,fill=black,color=black] (-2.00,-1.00) circle [radius=0.0700];
\path[draw,fill=black,color=black] (-2.00,0.00) circle [radius=0.0700];
\path[draw,fill=black,color=black] (-2.00,1.00) circle [radius=0.0700];
\path[draw,fill=black,color=black] (-2.00,2.00) circle [radius=0.0700];
\path[draw,fill=black,color=black] (-2.00,3.00) circle [radius=0.0700];
\path[draw,fill=black,color=black] (-1.00,-3.00) circle [radius=0.0700];
\path[draw,fill=black,color=black] (-1.00,-2.00) circle [radius=0.0700];
\path[draw,fill=black,color=black] (-1.00,-1.00) circle [radius=0.0700];
\path[draw,fill=black,color=black] (-1.00,0.00) circle [radius=0.0700];
\path[draw,fill=black,color=black] (-1.00,1.00) circle [radius=0.0700];
\path[draw,fill=black,color=black] (-1.00,2.00) circle [radius=0.0700];
\path[draw,fill=black,color=black] (-1.00,3.00) circle [radius=0.0700];
\path[draw,fill=black,color=black] (0.00,-3.00) circle [radius=0.0700];
\path[draw,fill=black,color=black] (0.00,-2.00) circle [radius=0.0700];
\path[draw,fill=black,color=black] (0.00,-1.00) circle [radius=0.0700];
\path[draw,fill=black,color=black] (0.00,0.00) circle [radius=0.0700];
\path[draw,fill=black,color=black] (0.00,1.00) circle [radius=0.0700];
\path[draw,fill=black,color=black] (0.00,2.00) circle [radius=0.0700];
\path[draw,fill=black,color=black] (0.00,3.00) circle [radius=0.0700];
\path[draw,fill=black,color=black] (1.00,-3.00) circle [radius=0.0700];
\path[draw,fill=black,color=black] (1.00,-2.00) circle [radius=0.0700];
\path[draw,fill=black,color=black] (1.00,-1.00) circle [radius=0.0700];
\path[draw,fill=black,color=black] (1.00,0.00) circle [radius=0.0700];
\path[draw,fill=black,color=black] (1.00,1.00) circle [radius=0.0700];
\path[draw,fill=black,color=black] (1.00,2.00) circle [radius=0.0700];
\path[draw,fill=black,color=black] (1.00,3.00) circle [radius=0.0700];
\path[draw,fill=black,color=black] (2.00,-3.00) circle [radius=0.0700];
\path[draw,fill=black,color=black] (2.00,-2.00) circle [radius=0.0700];
\path[draw,fill=black,color=black] (2.00,-1.00) circle [radius=0.0700];
\path[draw,fill=black,color=black] (2.00,0.00) circle [radius=0.0700];
\path[draw,fill=black,color=black] (2.00,1.00) circle [radius=0.0700];
\path[draw,fill=black,color=black] (2.00,2.00) circle [radius=0.0700];
\path[draw,fill=black,color=black] (2.00,3.00) circle [radius=0.0700];
\path[draw,fill=black,color=black] (4.00,-1.00) circle [radius=0.0700];
\path[draw,fill=black,color=black] (4.00,0.00) circle [radius=0.0700];
\path[draw,fill=black,color=black] (4.00,1.00) circle [radius=0.0700];
\path[draw,fill=black,color=black] (4.00,2.00) circle [radius=0.0700];
\path[draw,fill=black,color=black] (3.00,-2.00) circle [radius=0.0700];
\path[draw,fill=black,color=black] (3.00,-1.00) circle [radius=0.0700];
\path[draw,fill=black,color=black] (3.00,0.00) circle [radius=0.0700];
\path[draw,fill=black,color=black] (3.00,1.00) circle [radius=0.0700];
\path[draw,fill=black,color=black] (3.00,2.00) circle [radius=0.0700];
\path[draw,fill=black,color=black] (5.00,-1.00) circle [radius=0.0700];
\path[draw,fill=black,color=black] (5.00,1.00) circle [radius=0.0700];
\path[draw,fill=black,color=black] (5.00,2.00) circle [radius=0.0700];
\path[draw,fill=black,color=black] (6.00,-1.00) circle [radius=0.0700];
\path[draw,fill=black,color=black] (6.00,1.00) circle [radius=0.0700];
\end{tikzpicture}
\end{subfigure}
\caption{
\small\textcolor{vino}{${\bullet}$} $G$-weights, \textcolor{black}{\tiny$\bullet$} CM weights, \textcolor{lightgray}{$\bigcirc$} Gulotta (left), Ishii-Ueda (right)}
\label{fig:weights}
\end{figure}

\appendix{}

\appendix{}
\section{Toric DM stacks and semi-orthogonal decompositions}\label{stacks}
As usual we assume $M=N=\ZZ^n$. In this appendix we show that the derived category of a smooth toric 
DM stack $X_{\bf\Sigma}$ (see \cite{BorisovHorja}) for
${\bf\Sigma}=(\Sigma,(n_i)_{i=1}^\kl)$, where $|\Sigma|$ is a
(rational strongly convex) polyhedral cone over a convex lattice
polyhedron $P\times \{1\}\subset \RR^n$ and $n_i\in P\times \{1\}$ are
generators of $1$-dimensional cones in $\Sigma$, does not admit any
nontrivial semi-orthogonal decomposition.

We first recall a result from \cite{borisov2005orbifold}. Here  $\Sigma_{\max}$ denotes the set of maximal cones in $\Sigma$. 
\begin{proposition}\cite[Proposition 4.3 and its proof]{borisov2005orbifold}\label{prop:ocovst}
Let $\sigma$ be a maximal cone in  $\Sigma$.
  If $\sigma=[n_{i_1},\dots,n_{i_n}]$, then 
$X_{\boldsymbol\sigma}$, ${\boldsymbol\sigma}=(\sigma,(n_{i_j})_{j=1}^n)$, corresponds to an open substack of $X_{\bf\Sigma}$ and $X_{\bf\Sigma}=\bigcup_{\sigma\in\Sigma_{\max}}X_{\bf\sigma}$. 
Moreover $X_{\boldsymbol\sigma}\cong k^n/ G_\sigma$ for a finite subgroup $G_\sigma\subset G$ (with $G$ is as in  \S\ref{sec:prel}) such that the action of $G_\sigma$ on $k^n$ is unimodular and faithful. 
\end{proposition}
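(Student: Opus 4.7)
The plan is to derive the statement directly from the GIT presentation $X_{\bf\Sigma}=Y_\Sigma/G$ recalled in \S\ref{sec:prel}. First, for a maximal cone $\sigma=[n_{i_1},\dots,n_{i_n}]\in\Sigma_{\max}$, I would define
\[
Y_\sigma=\{z\in Y=k^\kl\mid z_j\neq 0\text{ for all }j\notin\{i_1,\dots,i_n\}\}.
\]
By the very definition of $Y_\Sigma$ (a point of $Y$ lies in $Y_\Sigma$ iff the vanishing coordinates correspond to generators of a cone of $\Sigma$), $Y_\sigma$ is open in $Y_\Sigma$ and invariant under $G$, and $Y_\Sigma=\bigcup_{\sigma\in\Sigma_{\max}}Y_\sigma$. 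Passing to the quotient stack gives the first two assertions: $X_{\bf\sigma}=Y_\sigma/G$ is open in $X_{\bf\Sigma}$ and these opens cover $X_{\bf\Sigma}$.

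Next I would identify the local model. Choose a point $z^\ast\in Y_\sigma$ with $z^\ast_j=1$ for $j\notin\{i_1,\dots,i_n\}$ and $z^\ast_{i_r}=0$; its stabilizer in $G$ is
\[
G_\sigma=\{g\in G\mid \beta_j(g)=1\text{ for all }j\notin\sigma(1)\}.
\]
Using the free action of the subtorus $(k^\ast)^{\kl-n}\subset G$ (acting transitively on the non-$\sigma$ coordinates) to rigidify those coordinates to $1$, one obtains a canonical isomorphism of stacks $Y_\sigma/G\cong k^n/G_\sigma$, where $k^n$ has coordinates $(z_{i_1},\dots,z_{i_n})$ and $G_\sigma$ acts with weights $\beta_{i_1}|_{G_\sigma},\dots,\beta_{i_n}|_{G_\sigma}$. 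Finiteness of $G_\sigma$ follows because the vectors $n_{i_1},\dots,n_{i_n}$ span $N_\RR$ (as $\sigma$ is a full-dimensional simplicial cone), so the characters $\beta_j$ ($j\notin\sigma(1)$) together with $\rho(M)$ generate a finite-index sublattice of $\ZZ^\kl$. Faithfulness of the $G_\sigma$-action on $k^n$ is immediate: an element acting trivially on every $z_{i_r}$ and (by definition of $G_\sigma$) also on every $z_j$, $j\notin\sigma(1)$, must be the identity of $G\subset (k^\ast)^\kl$.

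The main obstacle—and the only step requiring a non-trivial input—is unimodularity. I would deduce it from the Gorenstein hypothesis in Convention \ref{conv}. The element $m\in M$ satisfying $\langle m,n_i\rangle=1$ for all $i$ gives $\rho(m)=(1,\dots,1)\in\rho(M)$, so in $X(G)=\ZZ^\kl/\rho(M)$ we have
\[
\sum_{i=1}^\kl\beta_i=0,\qquad\text{hence}\qquad\sum_{r=1}^n\beta_{i_r}=-\sum_{j\notin\sigma(1)}\beta_j.
\]
Restricting to $G_\sigma$ kills the right-hand side by definition of $G_\sigma$, so $\sum_r\beta_{i_r}|_{G_\sigma}=0$ in $X(G_\sigma)$. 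This is exactly the statement that the $G_\sigma$-action on $k^n$ lands in $SL_n$, i.e.\ is unimodular. Assembling steps 1--3 then yields the full proposition, matching the description in \cite[Proposition 4.3]{borisov2005orbifold}.
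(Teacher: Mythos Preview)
Your argument is correct, and in fact the paper gives no proof of its own here: the proposition is quoted verbatim from \cite[Proposition 4.3 and its proof]{borisov2005orbifold} and left unproved. What you have written is precisely the standard unwinding of the GIT presentation $X_{\bf\Sigma}=Y_\Sigma/G$ that underlies the cited result, so there is nothing to compare against beyond saying that your sketch reproduces the BCS argument.

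One remark worth making explicit. The open-cover and local-model statements $X_{\boldsymbol\sigma}\cong [k^n/G_\sigma]$, together with finiteness and faithfulness of $G_\sigma$, are exactly what BCS prove in general, with no Gorenstein hypothesis. The unimodularity claim, by contrast, is an addendum specific to the setting of Appendix~A (where all $n_i\in P\times\{1\}$), and you correctly identify that this is the one place where Convention~\ref{conv} enters: the element $m$ with $\langle m,n_i\rangle=1$ forces $\sum_i\beta_i=0$ in $X(G)$ and hence $\sum_r\beta_{i_r}|_{G_\sigma}=0$. So your decomposition of the proof into ``general BCS part'' plus ``Gorenstein $\Rightarrow$ $SL_n$'' is exactly the right one. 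A tiny quibble: rather than asserting a subtorus $(k^*)^{\kl-n}\subset G$, it is cleaner to note that $G/G_\sigma\cong (k^*)^{\kl-n}$ acts simply transitively on the non-$\sigma$ coordinates (the short exact sequence $1\to G_\sigma\to G\to (k^*)^{\kl-n}\to 1$ does split, so your formulation is not wrong, but the quotient formulation avoids invoking the splitting).
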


{Recall that we denote $X_P=\Spec(R_{\sigma_P})$.}
\begin{lemma}\label{rmk:crepant}
Let $\pi_s:X_{\bf\Sigma}\to X_{\Sigma}$ be the canonical map. 
Then 
\[\Oscr_{X_{\bf\Sigma}}\cong\omega_{X_{\bf\Sigma}}\cong \pi_s^*\omega_{X_{\Sigma}}\cong \pi_s^*\Oscr_{X_{\Sigma}}.
\]
\end{lemma}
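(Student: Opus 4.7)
The plan is to identify all four sheaves in the chain with (pullbacks of) the structure sheaf, exploiting the Calabi-Yau/Gorenstein hypothesis built into this appendix's setup.

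First I would establish the identifications $\omega_{X_\Sigma}=\Mscr_{\Sigma,-1}$ and $\omega_{X_{\bold{\Sigma}}}=\Mscr_{\bold{\Sigma},-1}$, where $-1=(-1,\ldots,-1)\in\ZZ^k$. The first is the classical toric formula $K_{X_\Sigma}=-\sum_i D_{n_i}$ combined with \eqref{def:reflexive}. For the stacky analogue I would work locally using Proposition \ref{prop:ocovst}: on each chart $X_{\bold{\sigma}}=k^n/G_\sigma$ the unimodularity of the $G_\sigma$-action lets the top form $dx_{i_1}\wedge\cdots\wedge dx_{i_n}$ descend and trivialize $\omega_{X_{\bold{\sigma}}}$, and these local trivializations glue to the global section of $\Mscr_{\bold{\Sigma},-1}$ associated to $m\in M$ via \eqref{def:module}.

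Next I would exploit the fact that the hypothesis of this appendix is precisely Convention \ref{conv} with $m=(0,\ldots,0,1)$, so $\la m,n_i\ra=1$ and $\rho(m)=(1,\ldots,1)$. Consequently $\chi_{1}$ is the trivial character of $G=\Hom(\ZZ^k/\rho(M),k^*)$, and unwinding the definition of $\Mscr_{\bold{\Sigma},-1}$ yields
\[
\omega_{X_{\bold{\Sigma}}}=\Mscr_{\bold{\Sigma},-1}=\mu_{s,*}\bigl(\chi_{1}\otimes\Oscr_{Y_\Sigma/G}\bigr)\cong\Oscr_{X_{\bold{\Sigma}}}.
\]
Pushing forward by $\pi_s$ and invoking Lemma \ref{lem:relations} then produces $\omega_{X_\Sigma}\cong \Oscr_{X_\Sigma}$, and pulling back by $\pi_s^*$ supplies the remaining isomorphism in the chain.

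The only delicate point I anticipate is that the stacky formula $\omega_{X_{\bold{\Sigma}}}=\Mscr_{\bold{\Sigma},-1}$ implicitly requires every $n_i$ to be primitive (so there is no codimension-one gerby locus along $D_{n_i}$). This drops out of Convention \ref{conv} for free: a non-primitive $n_i=\ell n'_i$ with $\ell\geq 2$ and $n'_i\in N$ would force $\la m,n'_i\ra=1/\ell\notin\ZZ$, a contradiction. With this observation in hand the lemma reduces to the formal manipulations above, and as a byproduct one also sees that $\pi_s$ is an isomorphism in codimension one (a fact used elsewhere in the paper, e.g.\ in the proof of Proposition \ref{sec:standard}).
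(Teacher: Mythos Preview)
Your proof is correct but takes a genuinely different route from the paper's. The paper argues geometrically: it uses the unimodularity in Proposition~\ref{prop:ocovst} together with the Chevalley--Shephard--Todd theorem to show that the stacky locus of $\pi_s$ coincides with the singular locus of $X_\Sigma$ (hence has codimension $\ge 2$), then invokes reflexivity of both canonical sheaves to obtain $\pi_s^*\omega_{X_\Sigma}=\omega_{X_{\bold\Sigma}}$; triviality is finally deduced from the crepancy of $\tau:X_\Sigma\to X_P$ (quoted from \cite{CoxLittleSchenck}) and the Gorenstein property of $X_P$. You instead identify both canonical sheaves explicitly as $\Mscr_{?,-1}$ via the toric formula $K=-\sum_i D_{n_i}$ and its stacky analogue, and read off their triviality from the purely combinatorial fact $\rho(m)=(1,\ldots,1)$, so that $\chi_1=0$ in $X(G)$. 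Your argument is more self-contained---it bypasses both the reflexivity/codimension trick and the external reference for the crepancy of $\tau$---and it makes transparent that the Calabi--Yau property is literally the hypothesis $\langle m,n_i\rangle=1$ of Convention~\ref{conv}. The paper's route, on the other hand, isolates as an intermediate step the statement that $\pi_s$ is an isomorphism over the smooth locus of $X_\Sigma$; you recover the same codimension-one conclusion, but through the separate primitivity observation at the end rather than as part of the main argument.
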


\begin{proof}
Let $\sigma\in \Sigma_{\max}$. By Proposition \ref{prop:ocovst} the  smooth points in $X_{\sigma}$ correspond exactly to $G_\sigma$-orbits of points in $k^n$ with trivial stabilizer. 
Indeed, since $G_\sigma$ is unimodular
the stabilizer groups cannot contain pseudo-reflections, and therefore the corresponding quotient (of the $G_\sigma$-orbit of a point in $k^n$ with nontrivial stabilizer) cannot be regular by the Chevalley-Shephard-Todd theorem. 

Thus, as both  $\omega_{X_{\bf \Sigma}}$ and $\omega_{X_\Sigma}$ are reflexive and $\pi_s:X_{\bf \Sigma}\to X_\Sigma$ is the identity on the smooth locus of $X_\Sigma$ 
(whose complement has codimension $\geq 2$ as $X_\Sigma$ is a normal variety) we have $\pi_s^*\omega_{X_\Sigma}=\omega_{X_{\bf \Sigma}}$.   
Moreover, $\tau:X_\Sigma \to X_P$ is crepant; i.e., 
$\tau^* \omega_{X_P}=\omega_{X_\Sigma}$
 (see e.g. \cite[Example 11.2.6]{CoxLittleSchenck}) and as $X_P$ is Gorenstein it follows
  $\omega_{X_{\bf\Sigma}}\cong \Oscr_{X_{\bf \Sigma}}$.
\end{proof}

\begin{lemma}\label{relSerreaff}
The relative Serre functor\footnote{See \cite{Kuz}.} of $\Dscr(X_{\bf \Sigma})$ with respect to $\Dscr(X_P)$ is the identity; i.e. for all $\Fscr,\Gscr\in \Dscr(X_{\bf \Sigma})$ we have 
\begin{equation}\label{eq:relSerre}
\RHom_{X_{\bf \Sigma}}(\Fscr,\Gscr)\cong \RHom_{R_P}(\RHom_{X_{\bf \Sigma}}(\Gscr,\Fscr),R_P).
\end{equation}
\end{lemma}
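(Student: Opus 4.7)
The plan is to deduce \eqref{eq:relSerre} directly from Grothendieck–Serre duality applied to the proper morphism $f:=\tau\pi_s\colon X_{\bf\Sigma}\to X_P$. The crucial ingredient will be that, under the hypotheses of the paper, the relative dualizing complex $f^!\Oscr_{X_P}$ is trivial, which is exactly the assertion that the relative Serre functor is the identity.

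First I would check that $f$ is proper: $\pi_s$ is finite (being the coarse moduli morphism of a DM stack with finite stabilizers) and $\tau$ is projective birational (arising from a triangulation of $P$), so $f$ is proper. Since $X_P$ is Gorenstein and $\omega_{X_{\bf\Sigma}}\cong\Oscr_{X_{\bf\Sigma}}\cong f^*\omega_{X_P}$ by Lemma \ref{rmk:crepant}, and since $f$ has relative dimension zero, I obtain
\[
f^!\Oscr_{X_P}\;\cong\;\omega_{X_{\bf\Sigma}/X_P}\;\cong\;\Oscr_{X_{\bf\Sigma}}.
\]
Invoking Grothendieck duality for tame proper DM stacks (e.g.\ Nironi, or Hall–Rydh), for any $\Escr\in\Dscr(X_{\bf\Sigma})$ we then have
\[
\RHom_{R_P}(Rf_*\Escr,R_P)\;\cong\;Rf_*R\HHom_{X_{\bf\Sigma}}(\Escr,\Oscr_{X_{\bf\Sigma}}).
\]

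Next I would specialize this to $\Escr=R\HHom_{X_{\bf\Sigma}}(\Gscr,\Fscr)$. The left-hand side becomes $\RHom_{R_P}(\RHom_{X_{\bf\Sigma}}(\Gscr,\Fscr),R_P)$. For the right-hand side, the smoothness of $X_{\bf\Sigma}$ ensures that $\Fscr$ and $\Gscr$ are perfect, so biduality gives
\[
R\HHom\bigl(R\HHom(\Gscr,\Fscr),\Oscr_{X_{\bf\Sigma}}\bigr)\;\cong\;R\HHom(\Fscr,\Gscr),
\]
and pushing forward by $Rf_*$ yields $\RHom_{X_{\bf\Sigma}}(\Fscr,\Gscr)$. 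Combining these two identifications gives exactly \eqref{eq:relSerre}.

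The main obstacle, as I see it, is the rigorous invocation of Grothendieck–Serre duality in the DM-stack setting — one needs the machinery to be available for $X_{\bf\Sigma}$ (which is tame, separated, and smooth over $k$). Once that is in hand, the remainder of the argument is the formal assembly above, and all the geometric content is concentrated in the crepancy statement \ref{rmk:crepant} plus properness of $f$. A minor secondary point is ensuring that biduality is applied to genuinely perfect complexes; this is fine because smoothness of the DM stack $X_{\bf\Sigma}$ guarantees that every object of $\Dscr(X_{\bf\Sigma})=D^b_{\coh}(X_{\bf\Sigma})$ is perfect.
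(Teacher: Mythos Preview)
Your argument is essentially correct and reaches the same conclusion, but it differs from the paper's proof. The paper factors $f=\tau\pi_s$ and handles the two pieces separately: for $\pi_s$ it invokes Abuaf's result \cite[Theorem~3.0.17]{Abuaf2} (using $\pi_s^*\omega_{X_\Sigma}=\omega_{X_{\bf\Sigma}}$) to see that the relative Serre functor of $\Dscr(X_{\bf\Sigma})$ over $\Dscr(X_\Sigma)$ is the identity, and for $\tau$ it cites the argument of \cite[Lemma~4.13]{IW1}, which uses only that $\tau$ is a crepant map between Gorenstein varieties; one then composes. Your approach applies Grothendieck duality directly to the composite $f$ and is arguably more transparent, since it isolates exactly the two geometric inputs (properness of $f$ and triviality of $\omega_{X_{\bf\Sigma}/X_P}$). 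The cost is that you must take Grothendieck duality for tame DM stacks as a black box, which is precisely what the paper sidesteps by splitting the map and citing results that already contain the relevant stacky duality internally.

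Two small points to tighten. First, $\tau$ need not be \emph{projective}: the triangulation of $P$ is not assumed regular, so $X_\Sigma\to X_P$ is only proper (which is all you need, and which follows from $|\Sigma|=\sigma_P$). Second, $f$ does \emph{not} have relative dimension zero in the sense of fibre dimension --- over the singular locus of $X_P$ the fibres of $\tau$ can be positive-dimensional. What you actually use to get the shift in $f^!\Oscr_{X_P}$ right is that $\dim X_{\bf\Sigma}=\dim X_P$, i.e.\ that $f$ is generically finite (indeed birational on coarse spaces); together with the crepancy statement of Lemma~\ref{rmk:crepant} this yields $f^!\Oscr_{X_P}\cong\omega_{X_{\bf\Sigma}}\otimes f^*\omega_{X_P}^{-1}\cong\Oscr_{X_{\bf\Sigma}}$. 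With these corrections your argument goes through.
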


\begin{proof}
Since $\pi_s^*\omega_{X_{\Sigma}}=\omega_{X_{\bf\Sigma}}$ by Lemma \ref{rmk:crepant}, it follows from  \cite[Theorem 3.0.17]{Abuaf2} and its proof that the relative Serre functor of $\Dscr(X_{\bf \Sigma})$ with respect to $\Dscr(X_\Sigma)$ is the identity. It is easy to check that it is thus enough to show that the identity is a relative Serre functor of $\Dscr(X_\Sigma)$ with respect to $\Dscr(X_P)$. Since $\theta:X_\Sigma\to X_P$ is crepant (see proof of Lemma \ref{rmk:crepant}) and $X_\Sigma$, $X_P$ are Gorenstein, this follows by the same proof as in \cite[Lemma 4.13]{IW1}.
\end{proof}

\begin{lemma}\label{lem:connected}
$\Dscr(X_{\bf\Sigma})$ is indecomposable.
\end{lemma}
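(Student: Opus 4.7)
The plan is to assume, for contradiction, a nontrivial semi-orthogonal decomposition $\Dscr(X_{\bf\Sigma})=\langle\Ascr,\Bscr\rangle$ with both $\Ascr,\Bscr\neq 0$, and derive a contradiction in several steps.

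First, I would show that the decomposition is actually an orthogonal direct sum. For $a\in\Ascr$, $b\in\Bscr$ the SOD axiom gives $\RHom_{X_{\bf\Sigma}}(b,a)=0$, so by Lemma \ref{relSerreaff} (the relative Serre functor of $\Dscr(X_{\bf\Sigma})$ over $\Dscr(X_P)$ is the identity) and \eqref{eq:relSerre}, we obtain $\RHom_{X_{\bf\Sigma}}(a,b)=\RHom_{R_P}(0,R_P)=0$. Hence $\Dscr(X_{\bf\Sigma})=\Ascr\oplus\Bscr$ orthogonally.

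Next, I would argue that every line bundle $\Mscr_{\bf\Sigma,b}$ lies entirely in $\Ascr$ or in $\Bscr$, and moreover that all of them lie in the same summand. Writing $\Mscr_{\bf\Sigma,b}=L^A\oplus L^B$ in the orthogonal decomposition, one has $\End(\Mscr_{\bf\Sigma,b})=\End(L^A)\oplus\End(L^B)$ as rings. Using $\uHom(\Mscr_{\bf\Sigma,b},\Mscr_{\bf\Sigma,b})\cong\Oscr_{X_{\bf\Sigma}}$ together with Lemma \ref{lem:relations}, the left-hand side equals $\Gamma(X_{\bf\Sigma},\Oscr_{X_{\bf\Sigma}})=\M_0=R_P$, which is an integral domain and hence admits no nontrivial decomposition as a direct sum of rings. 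Consequently one of $L^A,L^B$ vanishes. For the second assertion, given $b,b'\in\ZZ^k$, choose $b''$ with $b''_i\geq\max(b_i,b'_i)$ for all $i$; then $0\in M_{b''-b}\cap M_{b''-b'}$ so both $\M_{b''-b}$ and $\M_{b''-b'}$ are nonzero. By Lemma \ref{lem:relations} (combined with the group homomorphism $b\mapsto\Mscr_{\bf\Sigma,b}$) these identify with the nonzero Hom groups $\Hom(\Mscr_{\bf\Sigma,b},\Mscr_{\bf\Sigma,b''})$ and $\Hom(\Mscr_{\bf\Sigma,b'},\Mscr_{\bf\Sigma,b''})$. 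Orthogonality then forces $\Mscr_{\bf\Sigma,b},\Mscr_{\bf\Sigma,b'},\Mscr_{\bf\Sigma,b''}$ into a common summand, which we may call $\Bscr$.

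Finally, I would close the argument by showing that the line bundles $\{\Mscr_{\bf\Sigma,b}\}_{b\in\ZZ^k}$ classically generate $\Dscr(X_{\bf\Sigma})$, forcing $\Bscr=\Dscr(X_{\bf\Sigma})$ and $\Ascr=0$, contradicting the nontriviality assumption. Since $X_{\bf\Sigma}=[Y_\Sigma/G]$ with $G$ diagonalizable acting on the smooth quasi-affine $Y_\Sigma\subset\AA^\kl$, coherent sheaves on $X_{\bf\Sigma}$ are $G$-equivariant coherent sheaves on $Y_\Sigma$; as $Y_\Sigma$ is smooth and $G$ is linearly reductive, any such sheaf admits a finite $G$-equivariant resolution by $G$-equivariant free sheaves, which are direct sums of line bundles of the form $\Mscr_{\bf\Sigma,b}$. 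The main obstacle is precisely this last step: verifying the generation statement in the DM-stack setting, since $X_{\bf\Sigma}$ is not projective over $\Spec k$ (only proper over the affine $X_P$), so one cannot directly invoke classical results on generation by line bundles on projective toric DM stacks. The key ingredient making it work is the explicit global quotient presentation together with the finite global dimension of $k[y_1,\ldots,y_\kl]$, which yields the required $G$-equivariant resolutions.
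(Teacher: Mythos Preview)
Your proof is correct, but it takes a genuinely different route from the paper's. A minor point first: the lemma as stated only asserts \emph{indecomposability} (no nontrivial \emph{orthogonal} decomposition), so your steps~1--2 (invoking Lemma~\ref{relSerreaff} to turn a SOD into an orthogonal decomposition) actually prove Corollary~\ref{cor:sod} rather than just the lemma. There is no circularity, since Lemma~\ref{relSerreaff} does not use the present lemma, but for the lemma alone you could simply start from an orthogonal decomposition.

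The substantive difference is in how indecomposability is established. The paper argues \emph{locally}: it restricts the putative orthogonal decomposition to each open chart $X_{\boldsymbol\sigma}\cong[k^n/G_\sigma]$, invokes \cite[Lemma~4.2]{BKR} to see that each chart is indecomposable, and then uses the skyscraper sheaf at a point lying in every chart (in the locus where $\pi_s$ is an isomorphism) to force all charts to lie in the same summand. You argue \emph{globally}: you show that every toric line bundle $\Mscr_{\bold\Sigma,b}$ is indecomposable (since $\End(\Mscr_{\bold\Sigma,b})=R_P$ is a domain), that any two are linked by nonzero Homs, and that together they classically generate $\Dscr(X_{\bold\Sigma})$ via the Cox presentation $X_{\bold\Sigma}=[Y_\Sigma/G]$ and Hilbert's syzygy theorem for $k[y_1,\dots,y_\kl]$. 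Your generation argument is sound: any $G$-equivariant coherent sheaf on $Y_\Sigma$ extends $G$-equivariantly to $\AA^\kl$ (take the graded $S$-submodule of $j_*\Fscr$ generated by finitely many homogeneous sections generating $\Fscr$), and then a finite graded free resolution restricts back. Your approach avoids the external input from \cite{BKR} and is arguably more self-contained; the paper's approach is more geometric and would transfer to settings lacking such a clean global quotient description.
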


\begin{proof}
Let $\sigma$ be a maximal cone in $\Sigma$. 
Then $X_{\boldsymbol\sigma}\cong k^n/G_\sigma$ for a finite group $G_\sigma$, which acts faithfully on $k^n$ by Proposition  \ref{prop:ocovst}. 
Thus, $\Dscr(X_{\boldsymbol\sigma})$ is indecomposable by \cite[Lemma 4.2]{BKR}. 
Assume that we have an orthogonal decomposition
$\Dscr=\Dscr(X_{\bf\Sigma})=\la \Dscr_1,\Dscr_2\ra$, $\Hom(\Dscr_1,\Dscr_2)=\Hom(\Dscr_2,\Dscr_1)=0$. 
Since $\Dscr(X_{\bf\sigma})$ is a Verdier quotient of $\Dscr(X_{\bf\Sigma})$ (as $X_{\bf \Sigma}$ is a noetherian stack by \cite[Prop. 15.4]{champs}) 
 and if $C=C_1\oplus C_2$, $C_i\in \Dscr_i$, has support in $X_{\bf\Sigma}\setminus U_{\bf\Sigma}$, the same is true for $C_1,C_2$, the orthogonal decomposition is preserved when passing to an open substack. 
For  $\sigma\in \Sigma_{\max}$  
we thus have $j_\sigma^*\Dscr_1=0$ or $j_\sigma^*\Dscr_2=0$ (where $j_\sigma:X_{\boldsymbol\sigma}\hookrightarrow X_{\bf\Sigma}$). 
Choose a point $x$ in $X_\Sigma$ which is in the intersection of the $X_\sigma$ and also in the locus where $\pi_s$ is an isomorphism. Then we can view $\Oscr_x$ as an object in $\Dscr$.
Since $\Oscr_x$ is indecomposable we can assume that $\Oscr_x\in \Dscr_1$. Since the restriction of $\Oscr_x$ to $X_{\boldsymbol\sigma}$ is nonzero for every $\sigma\in \Sigma_{\max}$, we get that $j_\sigma^*\Dscr_2=0$ for every $\sigma\in \Sigma_{\max}$, which easily implies that 
$\Dscr_2=0$.
\end{proof}

\begin{corollary}\label{cor:sod}
$\Dscr(X_{\bf \Sigma})$ does not admit any nontrivial semi-orthogonal decompositions. 
\end{corollary}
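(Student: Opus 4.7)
The plan is to combine the two preceding lemmas: Lemma \ref{relSerreaff} (the relative Serre functor of $\Dscr(X_{\bf\Sigma})$ over $\Dscr(X_P)$ is the identity) and Lemma \ref{lem:connected} (indecomposability of $\Dscr(X_{\bf\Sigma})$). The strategy is to first upgrade an arbitrary semi-orthogonal decomposition to an orthogonal one, and then invoke indecomposability to collapse it.

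Suppose we have a semi-orthogonal decomposition $\Dscr(X_{\bf\Sigma})=\langle \Ascr,\Bscr\rangle$. By definition this means $\Hom_{X_{\bf\Sigma}}(\Bscr,\Ascr)=0$, i.e.\ $\RHom_{X_{\bf\Sigma}}(B,A)=0$ for all $A\in\Ascr$, $B\in\Bscr$. The first key step is to use the trivial relative Serre functor to deduce the opposite vanishing: applying \eqref{eq:relSerre} with $\Fscr=A$ and $\Gscr=B$ gives
\[
\RHom_{X_{\bf\Sigma}}(A,B)\cong \RHom_{R_P}(\RHom_{X_{\bf\Sigma}}(B,A),R_P)=\RHom_{R_P}(0,R_P)=0.
\]
Hence $\Hom_{X_{\bf\Sigma}}(\Ascr,\Bscr)=0$ as well, and the semi-orthogonal decomposition is in fact an orthogonal decomposition $\Dscr(X_{\bf\Sigma})=\Ascr\oplus\Bscr$ in the sense of triangulated categories.

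Now by Lemma \ref{lem:connected} the category $\Dscr(X_{\bf\Sigma})$ is indecomposable, so one of the two factors $\Ascr$, $\Bscr$ must be zero, i.e.\ the original semi-orthogonal decomposition is trivial. An arbitrary finite semi-orthogonal decomposition $\Dscr(X_{\bf\Sigma})=\langle \Dscr_1,\ldots,\Dscr_r\rangle$ is then handled by grouping the first term against the rest and iterating.

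I do not expect a serious obstacle: the two input lemmas do essentially all the work, and the argument is a standard one whenever a ``Calabi--Yau type'' property (here, triviality of the relative Serre functor) is available. The only minor point to be careful about is checking that \eqref{eq:relSerre} applies uniformly to all objects of $\Ascr$ and $\Bscr$, including unbounded homs in the derived sense, so that the implication ``semi-orthogonal $\Rightarrow$ orthogonal'' is valid on the nose rather than only on cohomology; this follows directly from the formulation of Lemma \ref{relSerreaff} as an isomorphism of $\RHom$-complexes.
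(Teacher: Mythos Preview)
Your proof is correct and follows essentially the same route as the paper: use the identity relative Serre functor from Lemma \ref{relSerreaff} to turn any semi-orthogonal decomposition into an orthogonal one, then invoke indecomposability from Lemma \ref{lem:connected}. The paper's version is just a terser rendering of the same two steps.
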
{}

\begin{proof}
It follows by \eqref{eq:relSerre} that $\RHom_{X_{\bf \Sigma}}(\Gscr,\Fscr)=0$ implies  $\RHom_{X_{\bf \Sigma}}(\Fscr,\Gscr)=0$. Thus, if $\Dscr(X_{\bf \Sigma})$ has a non-trivial semi-orthogonal decomposition then it is decomposable. This contradicts Lemma \ref{lem:connected}.
\end{proof}

\section{Cohen-Macaulay modules}\label{appA}
In this section we show how one can extend a useful criterion of Bocklandt \cite{Bocklandt} for recognising Cohen-Macaulay modules of covariants in dimension $3$.   
It can be used to give a direct proof of the Cohen-Macaulayness of the endomorphism algebra $\Lambda=\End_{R_P}(\bigoplus_{b\in \ol S^i_l}\M_b)$ in \S\ref{Gulotta}. Moreover, we will explain that Theorem \ref{Broom} could be deduced from this fact provided we knew that $|\ol S^i_l|=2\Vol(P)$. 

Recall the following result.\footnote{The result may also be deduced from \eqref{eq:cohomMb} by  calculating the cohomology of $M_b$ restricted to the punctured spectrum of $\Spec R_P$. The latter is the toric variety associated to the fan spanned by $\partial P$.}
\begin{proposition}\cite[Lemma 4.5]{Bocklandt}\label{lemmaB}
Let $\sigma=[n_1,\dots,n_k]$, with $n_1,\dots,n_k$ oriented counter-clockwise, be a minimal presentation of $\sigma$, and let 
$b\in \ZZ^k$.  
The $R_P$-module $\M_b$ is Cohen-Macaulay  if and only 
  for every $m\in M$ the signs $s_{i}^b(m)$ follow the 
pattern $+\cdots+-\cdots-$ (up to cyclic permutation and possibly with no $+$ or $-$ present).
\end{proposition}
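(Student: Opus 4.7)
The plan is to combine the local cohomology criterion for Cohen-Macaulayness with the toric cohomology formula \eqref{eq:cohomMb} applied to the punctured spectrum of $\Spec R_P$; the argument works out cleanly only because $n=3$ forces $\partial P$ to be a topological circle.

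First, since $\M_b$ is a reflexive rank one module over the normal Gorenstein $3$-dimensional domain $R_P$, Serre's $S_2$ condition yields $H^i_\mathfrak{m}(\M_b)=0$ for $i\leq 1$, where $\mathfrak{m}$ denotes the irrelevant maximal ideal. Hence $\M_b$ is Cohen-Macaulay (i.e.\ of depth $3=\dim R_P$) if and only if $H^2_\mathfrak{m}(\M_b)=0$, which by the standard local-cohomology exact sequence is equivalent to $H^1(U,\widetilde{\M_b})=0$, with $U=\Spec R_P\setminus\{\mathfrak{m}\}$ the punctured spectrum.

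Next, identify $U$ toric-geometrically as the toric variety $X_{\Sigma_\partial}$, where $\Sigma_\partial$ is the fan whose cones are the proper faces of $\sigma_P$. Since $[n_1,\ldots,n_k]$ is a minimal counter-clockwise presentation of $\sigma_P$, the rays of $\Sigma_\partial$ are exactly the $n_i$ and the maximal (two-dimensional) cones are $[n_i,n_{i+1}]$ for consecutive indices modulo $k$. Under this identification $\widetilde{\M_b}|_U$ corresponds to $\Oscr(D_b)$ on $X_{\Sigma_\partial}$, so applying \eqref{eq:cohomMb} with $p=1$ gives
\[
H^1(X_{\Sigma_\partial},\Oscr(D_b))_m=\tilde H^0(V_{D_b,m},k)
\]
for every $m\in M$. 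Consequently $\M_b$ is Cohen-Macaulay if and only if $V_{D_b,m}$ is empty or connected for every $m$.

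The final step is to read $V_{D_b,m}$ off from \eqref{vdbm}: it is the subcomplex of the cyclic $1$-complex on vertex set $\{n_1,\ldots,n_k\}$ (with edges $[n_i,n_{i+1}]$ joining consecutive vertices) that contains the vertex $n_i$ whenever $s_i^b(m)=-$ and the edge $[n_i,n_{i+1}]$ exactly when both endpoints carry the sign $-$. Since the vertices sit cyclically on $\partial P$, such a subcomplex is empty or connected precisely when the index set $\{i:s_i^b(m)=-\}$ forms a single arc of consecutive indices modulo $k$ (allowing the empty arc or the full cycle), which is exactly the cyclic sign pattern $+\cdots+-\cdots-$ appearing in the statement. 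Quantifying over $m\in M$ yields the equivalence. The essential content of the proof is this topological translation of the $\tilde H^0$-vanishing into the cyclic arc condition; the local-cohomology reduction and the appeal to \eqref{eq:cohomMb} are standard toric bookkeeping.
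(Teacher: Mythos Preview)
Your proof is correct and follows precisely the approach the paper itself sketches in the footnote attached to the proposition: reduce Cohen--Macaulayness to the vanishing of $H^1$ on the punctured spectrum, identify the latter as the toric variety for the boundary fan of $\sigma_P$, and apply \eqref{eq:cohomMb} to translate $\tilde H^0(V_{D_b,m},k)=0$ into the cyclic arc condition on the signs. The paper does not spell out the argument beyond that footnote (it defers to \cite{Bocklandt}), so your write-up is a faithful and complete expansion of what the authors had in mind.
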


Let $P\subset \RR^2\times{1}$ be a lattice polygon with vertices $n_1,\ldots,n_k$ and  let $P'$ denote a lattice polygon obtained from $P$ by removing a triangle with vertex $n_k$.  
 We denote the other vertices of the triangle by $n_{k+1}$, $n_{k+2}$. They lie on the boundary of $P$ (and may, or may not, be vertices). 
Then $[n_1,\dots,n_k,n_{k+1},n_{k+2}]$ is convexly induced from $[n_1,\dots,n_k]$ with an induction datum $\mathfrak{t}=(t^{(k+1)},t^{(k+2)})$, where $t^{(k+1)}_j=0$ for $j\neq k-1,k$, $t^{(k+2)}_j=0$ for $j\neq 1,k$. For $b\in \ZZ^k$ and $\mathfrak{s}\in \{+,-\}^2$ we let $b_{\mathfrak{s}}$ be convexly induced from $b$ with the sign sequence $\mathfrak{s}$ (and the induction datum $\mathfrak{t}$). By $\ol{b_{\mathfrak{s}}}$ we denote the projection of $b_{\mathfrak{s}}$ to 
  $\ZZ^{k+1}$ omitting the $k$-th component.

Using the above criterion one can
use a simplified version of Step \ref{step2} in the proof of Lemma \ref{lem:pretilting} (with $\{w_1,w_2,\ldots\}=\{n_1,\ldots,n_{k+2}\}$)
 to obtain  Corollary \ref{pmcm} below. This is a generalization of \cite[Corollary 4.7]{Bocklandt} relaxing the assumption that  
 $\{n_{k+1},n_{k+2}\}\subset \{n_1,\ldots,n_{k-1}\}$
(in the latter case $\ol{b_{\mathfrak{s}}}$ is just the projection of $b$ and in particular the sign sequence $\mathfrak{s}$ is irrelevant).
\begin{corollary}\label{pmcm} 
If $\M_b$ is a Cohen-Macaulay $R_P$-module 
 then $\M_{\ol{b_{\mathfrak{s}}}}$ is a Cohen-Macaulay $R_{\nP}$-module. 
Moreover, if $\End_{R_P} (\oplus_{b\in S} \M_b)$ is a Cohen-Macaulay $R_P$-module  then the $R_{\nP}$-module $\End_{R_{\nP}}(\oplus_{b\in S}\M_{\ol{b_{\mathfrak{s}}}})$ is also Cohen-Macaulay. 
\end{corollary}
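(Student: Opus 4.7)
The plan is to apply Bocklandt's criterion (Proposition \ref{lemmaB}) to $P$ and then to $P'$, tracking sign patterns through the convex induction step using Lemma \ref{lem:sign}. The inductive setup is almost identical to Step \ref{step2} of the proof of Lemma \ref{lem:pretilting}, but applied with $(w_1,\ldots,w_{k+2})=(n_1,\ldots,n_{k+2})$ in cyclic order around the enlarged boundary.

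For the first claim, I would start by fixing $m\in M$ and the hypothesis that the signs $s_1^b(m),\ldots,s_k^b(m)$ follow a cyclic $+\cdots+-\cdots-$ pattern, i.e. have at most two cyclic sign transitions. The induction datum $\mathfrak{t}$ places $n_{k+1}$ on the edge $[n_{k-1},n_k]$ and $n_{k+2}$ on the edge $[n_k,n_1]$, so in the enriched cyclic order $n_1,n_2,\ldots,n_{k-1},n_{k+1},n_k,n_{k+2}$ the new signs $s_{k+1}^{b_{\mathfrak{s}}}(m)$ and $s_{k+2}^{b_{\mathfrak{s}}}(m)$ belong respectively to $\{s_{k-1}^b(m),s_k^b(m)\}$ and $\{s_k^b(m),s_1^b(m)\}$ by Lemma \ref{lem:sign}. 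Inserting a sign drawn from the signs of its two immediate neighbours in a cyclic sequence cannot create additional transitions, and deleting a single entry (the $k$-th coordinate, corresponding to $n_k$) can only decrease the transition count. Hence the signs on $n_1,\ldots,n_{k-1},n_{k+1},n_{k+2}$ still exhibit the cyclic $+\cdots+-\cdots-$ pattern. Passing to the (possibly smaller) minimal presentation of $P'$ amounts to restricting to a cyclic subsequence, which preserves the pattern. Applying Proposition \ref{lemmaB} in the reverse direction on $R_{P'}$ then yields that $\M_{\ol{b_{\mathfrak{s}}}}$ is Cohen--Macaulay.

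For the second claim, I would use \eqref{eq:refhom} to rewrite $\End_{R_P}(\bigoplus_{b\in S}\M_b)=\bigoplus_{b,b'\in S}\M_{b'-b}$ (and similarly over $R_{\nP}$); the module is Cohen--Macaulay iff each summand is. The hypothesis therefore gives that $\M_{b'-b}$ is Cohen--Macaulay over $R_P$ for all $b,b'\in S$. Lemma \ref{diff} then tells us that $b'_{\mathfrak{s}}-b_{\mathfrak{s}}$ is convexly induced from $b'-b$ for a possibly different sign sequence, and projection to $\ZZ^{k+1}$ commutes with subtraction, so $\ol{b'_{\mathfrak{s}}}-\ol{b_{\mathfrak{s}}}=\ol{(b'-b)_{\mathfrak{s}'}}$ for some $\mathfrak{s}'$. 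Applying the first claim to each difference $b'-b$ (with sign sequence $\mathfrak{s}'$) yields Cohen--Macaulayness of each summand of $\End_{R_{\nP}}(\bigoplus_{b\in S}\M_{\ol{b_{\mathfrak{s}}}})$, and hence of the full endomorphism ring.

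The principal subtlety is bookkeeping rather than mathematical depth: one must check that the cyclic insertion/deletion argument is valid regardless of whether $n_{k+1}$ or $n_{k+2}$ happen to coincide with $n_{k-1}$ or $n_1$ (in which case some inserted entries collapse into existing ones), and that $\M_{\ol{b_{\mathfrak{s}}}}$ is unaffected by passing between the possibly non-minimal presentation $(n_1,\ldots,n_{k-1},n_{k+1},n_{k+2})$ of $\sigma_{\nP}$ and its minimal refinement. The former is handled by noting that collapsing only reduces the number of cyclic transitions, and the latter follows from Lemma \ref{newpresenetation} applied iteratively to the colinear insertions on the edges of $P'$.
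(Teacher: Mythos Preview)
Your proposal is correct and follows essentially the same route as the paper: the authors state that Corollary~\ref{pmcm} is obtained by applying Proposition~\ref{lemmaB} together with a simplified version of Step~\ref{step2} in the proof of Lemma~\ref{lem:pretilting} with $\{w_1,w_2,\ldots\}=\{n_1,\ldots,n_{k+2}\}$, which is exactly what you do. Your treatment of the second claim via \eqref{eq:refhom} and Lemma~\ref{diff}, and your remarks on the bookkeeping subtleties, simply spell out details the paper leaves implicit.
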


If we could prove that using the Gulotta ``cutting'' procedure combined with Corollary \ref{pmcm} one gets a Cohen Macaulay $R_P$-module $M$ such that $\End_{R_P}(M)$ is Cohen-Macaulay and such 
that $M$ has precisely  $2\Vol(P)$ non-iso\-morphic indecomposable summands then one obtains by the following proposition (see also \cite[proof of Theorem 7.1]{Bocklandt}) a proof of Theorem \ref{Broom} which does not require the construction of a tilting bundle on a resolution.
 Unfortunately we have not been able to solve this purely combinatorial problem.

\begin{proposition}\label{prop:Vol}
Let $M$ be a graded $R_P$-module (for some connected grading on $R_P$)  such that $M$ contains at least $2\Vol(P)$ non-isomorphic graded  indecomposable summands and let $\Lambda=\End_{R_P}(M)$.
If $\Lambda$ is a Cohen-Macaulay $R_P$-module, then $\Lambda$ is an NCCR of $R_P$. 
\end{proposition}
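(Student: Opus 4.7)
My approach is to use reflexivity of $\Lambda$ to decompose $M$ into modules of covariants, and then to invoke the maximal modification theorem of Iyama--Wemyss together with the explicit NCCR constructed in the main body of the paper.

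First, since $R_P$ is $3$-dimensional Gorenstein and $\Lambda=\End_{R_P}(M)$ is Cohen-Macaulay over $R_P$, $\Lambda$ is reflexive. Therefore $\Lambda\cong \End_{R_P}(M^{\ast\ast})$, and we may replace $M$ by its reflexive hull. In the generic toric setting---which is our situation by \cite[\S11.6.1]{SVdB}---the assignment $b\mapsto \M_b$ provides an isomorphism $X(G)\cong \Cl(R_P)$, and every graded reflexive indecomposable $R_P$-module coincides, up to a graded shift, with some $\M_b$. Consequently $M\cong \bigoplus_{b\in S}\M_b$ as ungraded modules for some finite $S\subset X(G)$ with $|S|\ge 2\Vol(P)$.

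Next, by \eqref{eq:refhom} each summand $\Hom_{R_P}(\M_b,\M_{b'})\cong \M_{b'-b}$ of $\Lambda$ is Cohen-Macaulay. Proposition~\ref{lemmaB} then implies that every difference $b'-b$ with $b,b'\in S$ lies in the subset of $X(G)$ characterized by the sign-pattern condition on the vertices of $P$; in particular $M$ is a \emph{modifying} $R_P$-module in the sense of Iyama--Wemyss.

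To conclude, I would compare $M$ with the NCCR $\End_{R_P}(M_0)$ constructed in \S\ref{Gulotta} or \S\ref{IshiiUeda} via a split tilting bundle on a stacky crepant resolution, for which $M_0=\bigoplus_{b\in S_0}\M_b$ has $|S_0|=2\Vol(P)$ summands. The Iyama--Wemyss maximal modification theorem in dimension three states that modifying modules with the maximum possible number of non-isomorphic indecomposable summands are precisely those giving NCCRs, and that this maximum is an invariant of $R_P$. Since this invariant is $2\Vol(P)$ (witnessed by $M_0$), the hypothesis $|S|\ge 2\Vol(P)$ forces equality and $M$ to be maximal modifying, so $\Lambda=\End_{R_P}(M)$ is an NCCR.

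\emph{Main obstacle.} The delicate point is the classification step in the first paragraph: to rule out that $M$ has a graded indecomposable reflexive summand of rank $\ge 2$. For rank-one summands this is the standard identification $\Cl(R_P)=X(G)$, but potential higher-rank indecomposable reflexives in the graded category must be excluded using that $\End_{R_P}(N)$ being Cohen-Macaulay imposes strong structural constraints on $N$ in the generic toric setting; alternatively, one could perform the analysis $G$-equivariantly upstairs on $Y$, where all reflexive $R_P$-modules automatically correspond to representations of $G$.
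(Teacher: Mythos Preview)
Your overall framework---embed $M$ into a maximal modifying module via Iyama--Wemyss and compare with a known NCCR having $2\Vol(P)$ summands---is exactly the paper's approach. However, two points deserve comment.

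First, the entire detour in your first two paragraphs is unnecessary. Once $M$ is reflexive and $\End_{R_P}(M)$ is Cohen--Macaulay, $M$ is modifying by definition; you do not need to know that each summand is some $\M_b$, nor do you need Proposition~\ref{lemmaB}. The paper works directly with $M$ as an abstract modifying module, and the obstacle you flag about higher-rank indecomposable reflexives simply never arises.

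Second, and more seriously, there is a genuine gap in your last paragraph. In the Iyama--Wemyss theory, ``maximal modifying'' means \emph{not properly extendable by a further modifying summand}; it is not defined as ``having the maximum number of indecomposable summands''. So from the fact that $M_0$ gives an NCCR (hence is maximal modifying) you only get that the maximum number of summands is \emph{at least} $2\Vol(P)$, not that it equals $2\Vol(P)$. The missing upper bound is precisely what the paper supplies: one extends $M$ to a maximal modifying $M\oplus M'$ (with no common summands), notes that $\Lambda'=\End_{R_P}(M\oplus M')$ is then an NCCR, and uses that all NCCRs of $R_P$ are derived equivalent to a crepant resolution $X_\Sigma$ with $\rank K_0(X_\Sigma)=2\Vol(P)$. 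The punchline is the elementary $K$-theory inequality $\rank K_0(\Lambda')\ge m+m'$ (the paper isolates this as a separate lemma, proved via the surjection $K_0(\Lambda')\twoheadrightarrow K_0(\Lambda'/\rad\Lambda')$), which gives $2\Vol(P)\ge m+m'\ge m\ge 2\Vol(P)$ and hence $m'=0$. Your sentence ``witnessed by $M_0$'' does not supply this upper bound; without it, the argument is incomplete.
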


\begin{proof}
Since $R_P$ has an NCCR $A$ by \cite[Proposition 1.1, Corollary 3.2]{SVdB4}, it has a CCR which is derived equivalent to $A$ by \cite[Theorem 6.3.1]{VdB32}. 
By \cite{Bridgeland}, all crepant resolutions of $R_P$ are derived equivalent. There is a crepant resolution of $R_P$ given by $X_\Sigma$ corresponding to a triangulation of $P$ with triangles of area 
$1/2$.
By \cite{BorisovHorja} (see \cite[Theorem A.1]{SVdB4}), $\rank K(X_\Sigma)=2\Vol(P)$, 
thus $\rank K_0(A)=2\Vol(P)$. 

Since $M$ is a modifying module, it is a direct summand of a graded\footnote{The results of \cite{IW} are valid in the graded context.} ``maximal
modifying module'' $M\oplus M'$ by \cite[Proposition 1.19]{IW} and we may assume that $M$ and $M'$ have no common indecomposable summands.  
Since $R_P$ has
an NCCR, the ``maximal modification algebra'' $\Lambda':=\End_{R_P}(M\oplus M')$ is also an NCCR by \cite[Proposition 1.13]{IW}. As all
NCCRs of $R_P$ are derived equivalent by \cite[Theorem
1.5]{IyamaWemyssNCBO}, $\rank K_0(\Lambda')=\rank K_0(A)=2\Vol(P)$. Let $m$, $m'$ be the number of non-isomorphic indecomposable summands of $M$, $M'$. By Lemma \ref{lem:summands} 
below applied to $\Lambda'$ we now have $2\Vol(P)=\rank K_0(\Lambda')\ge m+m'\ge m\ge 2\Vol(P)$ (the last inequality by the hypotheses) and hence $m'=0$. Therefore $M'=0$ and $\Lambda'=\Lambda$. Hence
$\Lambda$ is an NCCR of $R_P$.
\end{proof}
We have used the following lemma.
\begin{lemma} \label{lem:summands}
Let $R$ be a commutative finitely generated connected
  graded ring and let $M$ be be a finitely generated graded $R$-module with
  $m$ non-isomorphic indecomposable summands. Put $
  \Lambda=\End_R(M)$. Then $\rank K_0(\Lambda)\ge m$.
\end{lemma}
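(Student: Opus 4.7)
The strategy is to use the functor ${-}\otimes_\Lambda M$ to transport the question of linear independence of certain projective classes in $K_0(\Lambda)$ into a Krull--Schmidt question in the category of finitely generated graded $R$-modules, where the answer is immediate. Concretely, write $M\cong\bigoplus_{i=1}^m M_i^{a_i}$ with the $M_i$ pairwise non-isomorphic (graded) indecomposable summands and $a_i\ge 1$. For each $i$, let $e_i\in\Lambda$ be the homogeneous degree-zero idempotent projecting $M$ onto a chosen copy of $M_i$, so that $e_iM=M_i$ and the $e_i$ are orthogonal. Set $P_i:=e_i\Lambda$, a finitely generated projective right $\Lambda$-module. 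The canonical identification $\Lambda\otimes_\Lambda M\cong M$ then gives $P_i\otimes_\Lambda M\cong e_iM=M_i$.

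The additive functor $F:={-}\otimes_\Lambda M$ from finitely generated projective right $\Lambda$-modules to finitely generated graded $R$-modules preserves direct sums and sends $\Lambda\mapsto M$, $P_i\mapsto M_i$, hence induces a group homomorphism
\[
F_*\colon K_0(\Lambda)\longrightarrow K_0^{\oplus}(R\text{-grmod}),
\]
where $K_0^{\oplus}(R\text{-grmod})$ denotes the Grothendieck group of the monoid of isomorphism classes of finitely generated graded $R$-modules under direct sum, and $F_*[P_i]=[M_i]$. By graded Krull--Schmidt the target is the free abelian group on isomorphism classes of finitely generated graded indecomposable $R$-modules, so $[M_1],\dots,[M_m]$ are $\ZZ$-linearly independent. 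Any relation $\sum_i n_i[P_i]=0$ in $K_0(\Lambda)$ therefore maps to $\sum_i n_i[M_i]=0$ in $K_0^{\oplus}(R\text{-grmod})$, forcing $n_i=0$ for all $i$. Thus $[P_1],\dots,[P_m]$ are linearly independent in $K_0(\Lambda)$, giving $\rank K_0(\Lambda)\ge m$.

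The main technical input is graded Krull--Schmidt for finitely generated graded $R$-modules, which holds because $R$ is connected graded noetherian over a field $k$: for each graded indecomposable $M_i$, the degree-zero part $\End_R(M_i)_0$ is a finite-dimensional local $k$-algebra, giving uniqueness of direct sum decompositions. The remainder of the argument is purely functorial and requires no structural hypotheses on $\Lambda$ such as being an NCCR, semiperfect, or of finite global dimension.
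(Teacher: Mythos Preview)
There is a gap in the construction of $F_*\colon K_0(\Lambda)\to K_0^{\oplus}(R\text{-grmod})$. The group $K_0(\Lambda)$ is built from \emph{ungraded} finitely generated projective right $\Lambda$-modules, and for such a $P$ the tensor product $P\otimes_\Lambda M$ is only an ungraded $R$-module, so $F$ does not take values in $R\text{-grmod}$. Concretely, a relation $\sum_i n_i[P_i]=0$ in $K_0(\Lambda)$ is witnessed by an isomorphism $\bigoplus_{n_i>0}P_i^{\,n_i}\oplus Q\cong \bigoplus_{n_i<0}P_i^{-n_i}\oplus Q$ for some projective $Q$; even after arranging $Q=\Lambda^N$ to be free, applying $-\otimes_\Lambda M$ yields only an \emph{ungraded} isomorphism $\bigoplus_i M_i^{\,c_i}\cong\bigoplus_i M_i^{\,c'_i}$, and graded Krull--Schmidt says nothing about that. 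In fact the lemma is false under a purely graded reading: for $R=k[x]$ and $M=R\oplus R(1)$ one has $\Lambda\cong M_2(k[x])$ with $\operatorname{rank}K_0(\Lambda)=1$, while $M$ has two graded-non-isomorphic indecomposable summands. The hypothesis must therefore be read with \emph{ungraded} non-isomorphism of the $M_i$, and then what is needed is cancellation in $\operatorname{add}(M)$ as an ungraded category, i.e.\ that the full endomorphism rings $\End_R(M_i)$---not merely their degree-zero parts $\End_R(M_i)_0$---are local. That is true here (e.g.\ after passing to the completion at $R_{>0}$), but it is an extra step your argument does not supply.

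The paper's proof sidesteps all of this by mapping through the graded radical: the ring surjection $\Lambda\to\Lambda/\operatorname{rad}\Lambda$ induces a homomorphism on ungraded $K_0$ which is surjective by idempotent lifting, and $\operatorname{rank}K_0(\Lambda/\operatorname{rad}\Lambda)=m$.
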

\begin{proof} Let $\rad \Lambda$ be the graded radical of $\Lambda$. Then the morphism between Grothen\-dieck groups $K_0(\Lambda)\r K_0(\Lambda/\rad \Lambda)$
is surjective (by lifting of idempotents) and moreover $\rank K_0(\Lambda/\rad\Lambda)=m$, finishing the proof.
\end{proof}

\section{Tilting bundles vs NCCRs}\label{appC}
In dimension $3$ (split) tilting bundles on crepant resolutions and (toric) NCCRs are
intimately connected. The following theorem summarizes the relation.
While the first part (see \cite{VdB32}) of the theorem is standard, the converse (see \cite{IUconj}) is quite involved and requires a deep
study of the delicate interplay between crepant resolutions, dimer
models, and moduli spaces of representations of the corresponding quivers,
and moreover NCCRs (see \cite{Bocklandt}).  Since such moduli spaces  are projective over affine, the converse in addition requires that the resolution is projective. 
\begin{theorem}\cite{VdB32}, \cite[Corollary 1.2]{IUconj}, \cite[Theorem 7.1.-7.3]{Bocklandt} \label{th:tilting}
Let $R$ be the coordinate ring of a 3-dimensional Gorenstein affine toric variety
and let $\pi:X\to \Spec  R$ be a crepant resolution.  If  $\Tscr$ is a tilting bundle on $X$ then $\End_X(\Tscr)$ is an NCCR of $R$. 
Moreover, if $\pi$ is projective and $\Lambda$ is a toric NCCR of $R$, then  there exists a split tilting bundle $\Tscr$ on $X$ such that $\Lambda\cong \End_X(\Tscr)$. 
\end{theorem}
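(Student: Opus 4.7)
For the forward implication, the plan is to verify the two defining properties of an NCCR for $\Lambda=\End_X(\Tscr)$: finite global dimension and the Cohen-Macaulay property as $R$-module. Finite global dimension follows from the tilting equivalence $\Dscr(X)\simeq \Dscr(\Lambda)$ together with smoothness of $X$: since $X$ is smooth of dimension $3$, every object in $\Dscr(X)$ has finite Ext-amplitude, and transporting this through the equivalence bounds $\gldim \Lambda$. For the Cohen-Macaulay property, the essential input is crepancy of $\pi$, which gives $\omega_X\cong \pi^\ast\omega_R\cong \Oscr_X$ (as $R$ is Gorenstein) and $R^i\pi_\ast \Oscr_X=0$ for $i>0$ (toric Gorenstein singularities are rational). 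Using Grothendieck duality for $\pi$, one concludes $R\pi_\ast\RHom_X(\Tscr,\Tscr)$ is self-dual as a complex of $R$-modules, and combined with (T2) this self-duality forces $\Lambda=\Hom_X(\Tscr,\Tscr)$ to have maximal depth over $R$. This is the standard argument from \cite{VdB32}.

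For the converse, starting with a toric NCCR $\Lambda=\End_R(\bigoplus_{b\in S}\M_b)$, the strategy is to realize $\pi:X\to \Spec R$ as a GIT moduli space of $\Lambda$-representations with a fixed dimension vector, and then exhibit the tautological bundle on it as the desired split tilting bundle. The projectivity hypothesis enters precisely because GIT quotients are automatically projective over the affine quotient, so this approach cannot a priori construct non-projective crepant resolutions. Concretely, one uses Bocklandt's identification of toric NCCRs with superpotential algebras of consistent dimer models $Q$ on a real torus (see \cite{Bocklandt}, Theorems 7.1-7.3), so that $\Lambda\cong kQ/(\partial W)$ for some superpotential $W$. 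For the dimension vector $\mathbf{d}=(1,\dots,1)$ and a suitable generic stability parameter $\theta$, the Ishii-Ueda theorem \cite{IUconj} shows that the moduli space $\Mscr_\theta(\Lambda,\mathbf{d})$ is a projective crepant resolution of $\Spec R$, and as $\theta$ varies over the GIT chambers one sweeps out all projective crepant resolutions up to isomorphism; hence for our given $\pi$ one chooses $\theta$ accordingly.

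Once $X\cong \Mscr_\theta(\Lambda,\mathbf{d})$ is in hand, the tautological bundle $\Tscr=\bigoplus_{b\in S}\Tscr_b$ is automatically split (the gauge group is a product of $k^\ast$'s indexed by $S$, so the universal representation decomposes as a sum of line bundles), and its endomorphism ring is computed to be $\Lambda$ from the universal property of the moduli space. That $\Tscr$ satisfies (T1) and (T2) is the content of \cite[Corollary 1.2]{IUconj}; it is proved via a direct analysis on $\Mscr_\theta(\Lambda,\mathbf{d})$ using the McKay-type machinery of Bridgeland-King-Reid-style moduli, combined with the consistency of the dimer which controls the Ext-vanishing.

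The hard part will be the converse. The forward direction is a clean application of Grothendieck duality and the smoothness of $X$, but the converse relies essentially on the projectivity assumption through GIT, on the combinatorial bridge to dimer models via \cite{Bocklandt}, and on the verification that tautological bundles on quiver moduli are tilting, which is genuinely subtle. Indeed, the main contribution of the present paper (and the reason the appendix points out this limitation) is to provide a replacement strategy for non-projective stacky resolutions, where no GIT-moduli description is available and one must verify the tilting property by direct cohomological methods as in Sections \ref{Gulotta} and \ref{IshiiUeda}.
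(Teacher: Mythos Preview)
The paper does not prove this theorem; it is stated as a summary of known results, with the forward direction attributed to \cite{VdB32} and the converse to \cite{IUconj} and \cite{Bocklandt}, and the surrounding prose in Appendix~\ref{appC} describes exactly the strategy you outline (standard Grothendieck-duality argument for the forward direction; dimer models plus GIT moduli of quiver representations for the converse, with projectivity entering because such moduli are projective over affine). Your sketch is faithful to that description and to the content of the cited references, so there is nothing to correct.
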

Currently the full generality of this result seems out of
reach of the methods used in this paper. This being said, the convex
induction approach makes it often possible to find the tilting bundles whose existence is asserted in Theorem \ref{th:tilting} in
a rather direct way. Interestingly, as we show below, convex induction is sometimes able to provide
tilting bundles even if the resolution is not projective. This situation is not covered by Theorem \ref{th:tilting}.

We list some examples where the convex induction method yields split tilting bundles on a  crepant resolution of an affine toric variety for any toric NCCR. Note that any such crepant resolution 
is toric 
and 
is obtained from the fan  associated to a lattice triangulation with triangles of area 1/2 of the polygon $P$ corresponding to $R$ (see \cite[Proposition 2.4]{MR1808643}, such a triangulation is called ``unimodular'').
\begin{enumerate}
\item\label{addA}
One way to triangulate $P$ is by using the Ishii-Ueda method (successively removing corner vertices and taking the convex hull of the remaining lattice points). 
For this type of triangulation we can  induce the $b$'s defined on the vertices of $P$, corresponding to the toric NCCR, to the vertices of the triangulation using triangle convex induction
as in \S\ref{IshiiUeda},  and define the corresponding split tilting bundle.

\item
Convex induction may still work even if the triangulation is not as in \eqref{addA}. For example, the star triangulation of a (minimal) regular lattice hexagon (see Figure \ref{fig:hexagon}) 
 is not of the above form, but one may use  convex induction to find a split tilting bundle. 
 Let $\Lambda=\End_R(\oplus_{b\in S}M_b)$ be a toric NCCR. Let $n_1,\dots,n_6$ be vertices of the polygon and let $n_7$ be the middle point. We  fix some induction datum $(t^{(7)}_{j})_j$ 
(e.g. $t^{(7)}_j=1/6$ for $j=1,\ldots,6$) and  let $\tilde{S}$ be compatibly convexly induced from $S$. We claim that  $\oplus_{\tilde{b}\in\tilde{S}} \Oscr(D_{\tilde{b}})$ is a tilting bundle on $X$. We need to show (see \eqref{eq:Mbdef}, \eqref{vdbm}) that 
 $\tilde{H}^{\geq 0}(V_{D_{\tilde{b}-\tilde{b}'},m},k)=0$ for $\tilde{b},\tilde{b}'\in \tilde{S}$ and $m\in M$. 
 It follows from the fact that $\Lambda$ is Cohen-Macaulay (see Proposition \ref{lemmaB}) that any sign sequence on the boundary of the polygon will be of the form $+,\dots,+,-,\dots,-$ (with possibly no $+$ or $-$ appearing), therefore $V_{D_{\tilde b-\tilde{b}'},m}$ would not be empty or contractible only if the sign pattern on the boundary is $-,\dots,-$ and there is $+$ in the middle. However, this is impossible by the convex induction (using Lemma \ref{diff}).
\end{enumerate}

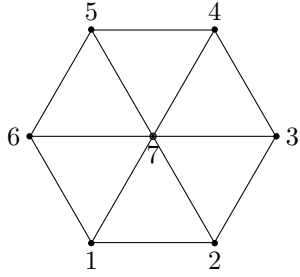
\begin{figure}[H]{}
\begin{tikzpicture}[scale=0.50]
\path
    node[
      regular polygon,
      regular polygon sides=6,
      draw,
      inner sep=1cm,
    ] (hexagon) {}
    %
    % Annotations
    (hexagon.corner 1) node[above] {$4$}
    (hexagon.corner 2) node[above] {$5$}
    (hexagon.corner 3) node[left] {$6$}
    (hexagon.corner 4) node[below] {$1$}
    (hexagon.corner 5) node[below] {$2$}
    (hexagon.corner 6) node[right] {$3$}
    (hexagon.center) node[below] {$7$}
    %
    % Small filled black circles
    plot[
      mark=*,
      samples at={1, ..., 6},
    ] (hexagon.corner \x);
%\tkzDefPoint(hexagon.corner 1){1}
\tkzDrawPoints(hexagon.center);
\path[draw,](hexagon.corner 1)--(hexagon.corner 4);
\path[draw,](hexagon.corner 2)--(hexagon.corner 5);
\path[draw,](hexagon.corner 3)--(hexagon.corner 6);
\end{tikzpicture}
\caption{A crepant resolution of a hexagon.}
  \label{fig:hexagon}
\end{figure}
The combinatorial argument used in the last example can be used in more
general situations via the following lemma.
\begin{lemma} \label{lem:abstraction}
Let $(n_i)_{i\in I}$ be the vertices of a unimodular triangulation of~$P$ and assume that the $(n_i)_i$ are obtained by convex induction from the vertices of $P$
for a chosen induction datum. Assume that
every set of signs $(s_i)_{i\in I}\in \{\pm \}^I$ compatible with Lemma \ref{lem:sign} whose the restriction to the vertices of $P$ 
is of the form $+,\ldots,+,-,\ldots,-$ (possibly reflected and with at least one $+$ and one $-$),
has the property
that every $n_i$ is connected with a constant sign path to $\partial P$. Then $R$ satisfies the converse
of Theorem \ref{th:tilting} for the (not necessarily projective) crepant resolution of $\Spec R$
given by the triangulation. 
\end{lemma}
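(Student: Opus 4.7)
Plan. Given an arbitrary toric NCCR $\Lambda = \End_R(\bigoplus_{b\in S}\M_b)$ of $R$, I would mirror the construction of \S\ref{IshiiUeda}: compatibly convexly induce $S$ using the prescribed induction datum and some fixed sign sequence to obtain $\tilde S \subset \ZZ^{|I|}$, and form
\[
\Tscr := \bigoplus_{\tilde b \in \tilde S} \Oscr(D_{\tilde b})
\]
on the smooth toric variety $X = X_\Sigma = X_{\bf \Sigma}$ (smooth thanks to unimodularity of the triangulation). Lemma \ref{diff} combined with Corollary \ref{convind} gives $\M_{\tilde b - \tilde b'} = \M_{b - b'}$, so by \eqref{eq:refhom} and \eqref{def:module} we have $\End_X(\Tscr) \cong \Lambda$. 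Once $\Tscr$ is shown to be tilting, Proposition \ref{sec:standard} yields the required split tilting bundle realizing $\Lambda$. The generation axiom (T1) will follow from Corollary \ref{cor:sod} via the admissibility argument in the proof of Lemma \ref{tilting} (using $\gldim \Lambda < \infty$), so the real work is in verifying the Ext-vanishing (T2).

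By \eqref{eq:cohomMb}, (T2) amounts to showing that $V_{D_{\tilde c},m}$ is empty or contractible for every $\tilde c = \tilde b - \tilde b'$ and every $m \in M$. The module $\M_c$ (with $c = b - b'$) is Cohen-Macaulay as a summand of the NCCR $\Lambda$, so Proposition \ref{lemmaB} forces the sign pattern of $s^c_n(m)$ along the vertices of $P$ in cyclic order to be $+\cdots+-\cdots-$ (possibly all $+$ or all $-$). In the uniform case, iterated application of Lemma \ref{lem:sign} through the convex induction extends the uniformity to all $i \in I$, making $V_{D_{\tilde c},m}$ either empty (all $+$) or equal to $P$ (all $-$) -- both acceptable. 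In the mixed case the induced sign pattern on $I$ is compatible with Lemma \ref{lem:sign} (being produced by the same convex induction via Lemma \ref{diff}) and restricts to a genuine $+\cdots+-\cdots-$ arc decomposition on $\partial P$, so the hypothesis of the lemma applies and every $n_i$ is joined to $\partial P$ by a constant sign path in the 1-skeleton of the triangulation.

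The main obstacle is then to deduce contractibility of the subcomplex
\[
V_{D_{\tilde c},m} = \bigl\{n_i : s^{\tilde c}_{n_i}(m) = -\bigr\} \cup \{\text{$-$-edges}\} \cup \{\text{$-$-triangles}\}
\]
of the triangulated disk $P$ from this path-connectivity. Connectedness is immediate since the $-$-arc on $\partial P$ is connected and absorbs every interior $-$-vertex via its $-$-path. For simple connectedness, I would argue that a non-trivial loop in $V_{D_{\tilde c},m}$ bounds a proper ``hole'' $H \subset P$ (disjoint from $\partial P$) in $P$. One first shows $H$ must contain a vertex $v$ of the triangulation strictly in its interior: otherwise every triangle contained in $H$ would have all vertices on $\partial H \subset V_{D_{\tilde c},m}$, hence all vertices $-$, forcing the triangle itself into $V_{D_{\tilde c},m}$, a contradiction. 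Such a $v$ is necessarily a $+$-vertex (since $v \notin V_{D_{\tilde c},m}$), but then any $+$-path from $v$ to $\partial P$ would have to exit $H$ through an edge of $\partial H$ -- an edge with both endpoints $-$, contradicting that the path uses only $+$-vertices. This contradicts the hypothesis applied with signs reversed, so $V_{D_{\tilde c},m}$ is simply connected, and therefore contractible, completing the verification of (T2).
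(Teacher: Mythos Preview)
Your argument tracks the paper's approach closely: set up $\Tscr$ via compatible convex induction, identify $\End_X(\Tscr)\cong\Lambda$ through Corollary~\ref{convind} and Lemma~\ref{diff}, obtain (T1) from Corollary~\ref{cor:sod} exactly as in Lemma~\ref{tilting}, and reduce (T2) to showing each $V_{D_{\tilde c},m}$ is empty or contractible. The case split (all $+$, all $-$, mixed) and the use of the constant-sign-path hypothesis to obtain connectedness of the $-$-locus together with the condition ``no $-$-cycle encloses a $+$'' are precisely what the paper does before invoking Lemma~\ref{lem:zastrow}. (Incidentally, the phrase ``applied with signs reversed'' is unnecessary: the hypothesis already gives a constant-sign path for \emph{every} $n_i$, so a $+$-vertex trapped inside a $-$-cycle contradicts it directly.)

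There is one genuine gap: the inference ``simply connected, and therefore contractible''. This fails for $2$-complexes in general ($S^2$ is the obvious counterexample), and the paper does not treat it as automatic either --- it invokes Zastrow's theorem that planar sets are aspherical, and then Whitehead. In your situation there is a more elementary fix: since $V:=V_{D_{\tilde c},m}$ is a subcomplex of the triangulated disk $P$ and $P$ has no $3$-simplices, one has $Z_2(P)=0$ (any $2$-cycle forces zero coefficient on each triangle meeting $\partial P$, and then on all triangles by propagation across interior edges), hence $H_2(V)=Z_2(V)\subset Z_2(P)=0$. Hurewicz then gives $\pi_2(V)=0$, and inductively $\pi_n(V)\cong H_n(V)=0$ for all $n\ge 2$ since $V$ is $2$-dimensional; Whitehead finishes. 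Without some such argument your proof of (T2) is incomplete.
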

\begin{proof}
Let $(n_j)_{j\in J}$ be the vertices of $P$ and let 
let $S=\{(b_j)_{j\in J}\}$
be a collection of $b$'s defining an NCCR of $R$ and let $\tilde{S}=\{(\tilde{b}_i)_{i\in I}\}$ be obtained from
$S$ by compatible convex induction for an arbitrary sign sequence and the given induction datum. Let $\tilde{b}=\tilde{b}'-\tilde{b}''$
for some $\tilde{b}',\tilde{b}''\in \tilde{S}$. Then $\tilde{b}$ is obtained by convex induction
from $b:=b'-b''$ by Lemma \ref{diff}.

For $m\in M$ put $s_i=s_{i}^{\tilde{b}}(m)$. Then $(s_i)_{i\in I}$ is compatible with Lemma \ref{lem:sign}.
We will show that $V_{D_{\tilde{b}},m}$ is either empty or contractible.

Since $S$ defines an NCCR the restriction of $(s_i)_{i\in I}$ to the vertices  
 of $P$ will be of the form $+,\dots,+,-,\dots,-$ (with possibly no $+$ or $-$ appearing)
(see Proposition \ref{lemmaB}). By Lemma \ref{lem:sign} the same will
be true for the restriction to the vertices of the triangulation in $\partial P$.

If the sequence restricted to $\partial P$ is $+,\ldots,+$ then by Lemma \ref{lem:sign}: $V_{\tilde{b},m}=\emptyset$. If the
sequence is $-,\ldots,-$ then $V_{D_{\tilde{b}},m}=P$.

Assume now that the restricted sequence is $+,\ldots,+,-,\ldots,-$. Since every $n_i$ has a constant sign path
to the boundary the set of $-$'s is connected and moreover no cycle of $-$'s can surround a $+$. It now suffices
to apply Lemma \ref{lem:zastrow} below.
\end{proof}
\begin{lemma} 
\label{lem:zastrow} Assume we are given a triangulation of $P$ with vertices
  $(n_i)_{i\in I}$. Let $b\in \ZZ^I$, $m\in M$ and assume in addition
  that $(s^b_i(m))_i$ is such that the set of $-$'s is connected and
  furthermore that no cycle of $-$'s surrounds a $+$. Then $V_{D_b,m}$ is
  either empty or contractible.
\end{lemma}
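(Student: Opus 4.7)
The plan is to identify $V_{D_b,m}$ with the subcomplex $K$ of the given triangulation of $P$ spanned by the $-$-labelled vertices (i.e.\ the union of the closed simplices of the triangulation whose vertices all satisfy $s_v^b(m) = -$), and to show that $K$ is contractible whenever it is non-empty. If there are no $-$-vertices, $K = \emptyset$ and we are done; otherwise, the first hypothesis (connectedness of the $-$-set) immediately yields that $K$ is connected, so the main task is to show $K$ is simply connected and $H_2(K)=0$.

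For simple-connectedness, I would take an arbitrary simple cycle $\gamma$ in the $1$-skeleton of $K$ and invoke the Jordan curve theorem: $\gamma$ encloses a bounded region $R\subset \RR^2$. Since $\gamma\subset P$ and $P$ is a topological disk (hence simply connected), the closed region $\overline R$ is contained in $P$ and is a union of closed simplices of the triangulation. By the ``no cycle of $-$'s surrounds a $+$'' hypothesis, every vertex of the triangulation lying in $\overline R$ must be $-$-labelled; consequently every $2$-simplex of the triangulation contained in $\overline R$ has all its vertices in $K$ and therefore lies in $K$. Hence $\overline R\subset K$, so $\gamma$ bounds a $2$-disk in $K$. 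Since any loop in the $2$-complex $K$ can be homotoped to a $1$-cycle in its $1$-skeleton, and every such $1$-cycle is a sum of simple cycles, this forces $\pi_1(K) = 0$.

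To conclude contractibility I would argue $H_2(K) = 0$ and then apply Hurewicz together with Whitehead. The vanishing of $H_2$ uses that $K$ is a $2$-dimensional subcomplex of a triangulation of a planar region: any integral $2$-cycle $c = \sum n_\sigma \sigma$ must have $n_\sigma = 0$ on each $2$-simplex $\sigma$ adjacent to an edge of $K$ that is contained in $\partial P$ or is a free edge of the subcomplex (such an edge touches only one $2$-simplex of $K$, so its coefficient in $\partial c$ is $\pm n_\sigma$); the vanishing then propagates through shared edges to all $2$-simplices of $K$, since no closed orientable $2$-manifold embeds in $\RR^2$. Given $\pi_1(K)=0$ and $H_2(K)=0$, Hurewicz yields $\pi_2(K)=0$, and a $2$-dimensional CW-complex with trivial $\pi_1$ and $\pi_2$ is contractible by Whitehead's theorem.

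The main obstacle is the planar-topology step in the middle paragraph: one must carefully justify that the bounded Jordan region $R$ sits inside $P$ (using that $P$ is a disk) and is exactly tiled by $2$-simplices of the triangulation, all of which are then forced into $K$ by the hypothesis. Everything else is routine homotopy theory applied to a finite $2$-dimensional planar complex.
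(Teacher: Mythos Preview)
Your argument is correct and follows the same overall architecture as the paper's proof: identify $V_{D_b,m}$ with the full subcomplex on the $-$-vertices, deduce $\pi_0=\pi_1=0$ from the two hypotheses, kill the higher homotopy, and finish with Whitehead. The difference lies in how the higher homotopy is disposed of. The paper simply invokes Zastrow's theorem (\emph{planar sets are aspherical}) to get $\pi_i(V)=0$ for all $i\ge 2$ in one stroke. You instead argue directly that a finite $2$-dimensional simplicial complex embedded in $\RR^2$ has $H_2=0$ (by peeling off $2$-simplices with a free edge), then use Hurewicz and the dimension bound to obtain $\pi_i=0$ for $i\ge 2$.

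Your route is more elementary and entirely self-contained; Zastrow's result is about arbitrary planar sets and is considerably deeper than what is needed here. The paper's route is shorter on the page. One small point worth tightening in your write-up: the sentence ``every such $1$-cycle is a sum of simple cycles, this forces $\pi_1(K)=0$'' reads as a homology statement. To make it a $\pi_1$ statement, pick a spanning tree $T$ of $K^{(1)}$; the generators of $\pi_1(K^{(1)})$ are the simple cycles $C_e\subset T\cup\{e\}$ for $e\notin T$, each of which you have shown bounds a disk in $K$, so each generator dies in $\pi_1(K)$. With that adjustment the argument is complete.
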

\begin{proof} Put $V=V_{D_b,m}$.
Assume $V\neq \emptyset$. Then the hypotheses imply $|\pi_i V|=1$ for $i=0,1$.
 Furthermore as $V$ is planar, Zastrow's theorem \cite[Theorem A]{Zastrow} asserts that $|\pi_iV|=1$ for $i\ge 2$.
Since $V$ is a CW-complex this implies that $V$ is contractible by Whitehead's theorem.
\end{proof}
\begin{enumerate}
  \setcounter{enumi}{2}
\item
Projective crepant resolutions correspond to regular triangulation of $P$ (see e.g. \cite[Definition 15.2.8]{CoxLittleSchenck}). However, there are also non-regular triangulations for which the convex induction method works. As an example (see Figure \ref{fig:nonregular}) we take the triangulation \cite[\S 5.2]{Logvinenko} refining the iconic  (minimal) example of a non-regular triangulation \cite[Example 1.4]{Santos}. 
\begin{figure}[H]{}
\begin{tikzpicture}[scale=0.50]
\tkzDefPoint(0,0){1}
\tkzDefPoint(8,0){2}
\tkzDefMidPoint(1,2)\tkzGetPoint{4}
\tkzDefTriangle[equilateral](1,2)\tkzGetPoint{3}
\tkzDefMidPoint(2,3)\tkzGetPoint{5}
\tkzDefMidPoint(3,1)\tkzGetPoint{6}
\tkzInterLL(1,5)(2,6)\tkzGetPoint{7}
\tkzDefMidPoint(1,7)\tkzGetPoint{8}
\tkzDefMidPoint(2,7)\tkzGetPoint{9}
\tkzDefMidPoint(3,7)\tkzGetPoint{10}
\tkzDrawPolygon(1,2,3)
\tkzDrawSegments(1,8)
\tkzDrawSegments(1,9)
\tkzDrawSegments(4,9)
\tkzDrawSegments(9,2)
\tkzDrawSegments(2,10)
\tkzDrawSegments(5,10)
\tkzDrawSegments(3,10)
\tkzDrawSegments(3,8)
\tkzDrawSegments(6,8)
\tkzDrawSegments(8,9)
\tkzDrawSegments(10,9)
\tkzDrawSegments(10,8)
\tkzDrawSegments(7,8)
\tkzDrawSegments(7,9)
\tkzDrawSegments(10,7)
\tkzDrawPoints(1,2,3,4,5,6,7,8,9,10)

\node[] at (0,-0.4) {$1$};

\node[] at (4,-0.4) {$4$};

\node[] at (8,-0.4) {$2$};

\node[] at (4,7.3) {$3$};

\node[] at (6.3,3.6) {$5$};

\node[] at (1.7,3.6) {$6$};

\node[] at (4,1.9) {$7$};

\node[] at (2,0.8) {$8$};

\node[] at (6,0.8) {$9$};

\node[] at (4,5) {$10$};

\end{tikzpicture}
\caption{A non-regular triangulation}
\label{fig:nonregular}
\end{figure}
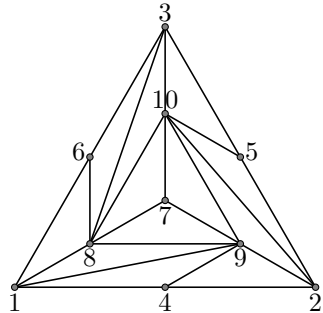

We induce $[n_1,n_2,n_3,n_4,\dots,n_{10}]$ from $[n_1,n_2,n_3]$ with the induction datum $(t_j^{(i)})_{ij}$ satisfying  
$t^{(4)}_{1}=t^{(4)}_{2}=t^{(5)}_{2}=t^{(5)}_{3}=t^{(6)}_{1}=t^{(6)}_{3}=1/2$, 
$t^{(7)}_{1}=t^{(7)}_{2}=t^{(7)}_{3}=1/3$, 
$t^{(8)}_{1}=t^{(8)}_{7}=t^{(9)}_{2}=t^{(9)}_{7}=t^{(10)}_{3}=t^{(10)}_{7}=1/2$, 
and $t^{(i)}_j=0$ otherwise. 
We now use Lemma \ref{lem:abstraction}. Let $(s_i)_i$ be as in the lemma; 
thus in particular $\{s_1,s_2,s_3\}=\{\pm\}$.  We have to verify that
every $n_j$ in the interior of the triangle has a constant sign path
to the boundary.  Indeed, $n_7$ is connected to the $n_j$ for
$j=1,2,3$ if ${s}_j={s}_7$ and $n_8$ is also
connected to a boundary point as otherwise
${s}_1={s}_6={s}_3\neq {s}_8$, and
therefore
${s}_8={s}_7={s}_2={s}_9$, a
contradiction which establishes the claim by symmetry.

\end{enumerate}
\bibliographystyle{amsalpha}

\end{document}